\documentclass[a4paper, 10pt]{amsart}
\usepackage[utf8]{inputenc}
\usepackage[T1]{fontenc}
\usepackage{lmodern}
\usepackage{graphicx}
\usepackage[english]{babel}
\usepackage{subfigure}
\usepackage{mathrsfs}
\usepackage[bbgreekl]{mathbbol}
\usepackage{amsmath, amsfonts, amssymb, dsfont}
\usepackage{stmaryrd}
\usepackage{mathrsfs}
\usepackage{mathtools}
\usepackage{listings}
\usepackage{subfigure}
\usepackage{floatrow}
\usepackage{fourier-orns}
\usepackage{amsthm}
\usepackage{algorithm}
\usepackage{algorithmic}
\usepackage{textcomp}
\usepackage{faktor}
\usepackage{xfrac}
\usepackage[all]{xy}
\usepackage[colorlinks = True , allcolors = blue]{hyperref}

\newcommand{\tsp}[1]{{}^t \! #1}

\newcommand{\ii}{\mathrm{i}}

\newcommand{\Span}{\operatorname{Span}}

\newcommand*\diff{\mathop{}\!\mathrm{d}}

\newcommand{\Id}{\operatorname{Id}}

\newcommand{\R}{\mathbb R}
\newcommand{\N}{\mathbb N}

\newcommand{\Z}{\mathbb Z}

\newcommand{\CC}{\mathbb C}

\newcommand{\hol}{\operatorname{Hol}}
\newcommand{\CCC}{\mathscr{C}}
\newcommand{\T}{\mathcal T}
\newcommand{\F}{\mathcal{F}}

\newcommand{\End}{\operatorname{End}}
\newcommand{\Aut}{\operatorname{Aut}}
\newcommand{\Ad}{\operatorname{Ad}}

\newcommand{\g}{\mathfrak{g}}

\newcommand{\Heis}{\operatorname{Heis}}
\newcommand{\heis}{\mathfrak{heis}}

\newcommand{\vphi}{\varphi}

\newcommand{\act}{\curvearrowright}

\newcommand{\lag}{\langle}
\newcommand{\rag}{\rangle}

\newcommand{\op}{\operatorname{Op}}
\newcommand{\ttt}{\mathfrak{t}}
\newcommand{\toep}{\mathcal{T}}

\newcommand{\Sp}{\operatorname{Sp}}
\newcommand{\Sym}{\operatorname{Sym}}

\newcommand\isomto{\stackrel{\sim}{\smash{\longrightarrow}\rule{0pt}{0.4ex}}}

\def\dar[#1]{\ar@<2pt>[#1]\ar@<-2pt>[#1]}
\entrymodifiers={!!<0pt,0.7ex>+}

\newcommand{\eq}[1][r]
   {\ar@<-3pt>@{-}[#1]
    \ar@<-1pt>@{}[#1]|<{}="gauche"
    \ar@<+0pt>@{}[#1]|-{}="milieu"
    \ar@<+1pt>@{}[#1]|>{}="droite"
    \ar@/^2pt/@{-}"gauche";"milieu"
    \ar@/_2pt/@{-}"milieu";"droite"}
\everymath{\displaystyle\everymath{}}

\theoremstyle{plain}
\newtheorem{thm}{Theorem}[section]

\newtheorem{lem}[thm]{Lemma}
\newtheorem{prop}[thm]{Proposition}
\newtheorem{cor}{Corollary}[thm]
\newtheorem{ex}[thm]{Example}

\theoremstyle{definition}
\newtheorem{mydef}[thm]{Definition}

\theoremstyle{remark}
\newtheorem{rem}[thm]{Remark}

\DeclareSymbolFontAlphabet{\mathbb}{AMSb}
\DeclareSymbolFontAlphabet{\mathbbl}{bbold}

\title[]{Toeplitz algebras and the Heisenberg group}
\author{Clément Cren}
%\subjclass[2020]{Primary: 58H05, 58A10 ; Secondary: 58A14, 58A30, 58J22}
%\keywords{Bernstein-Gelfand-Gelfand operators, Foliation, Parabolic geometry, Pseudodifferential calculus, Analysis on Lie groups}
\address{Mathematisches Institut\\
Georg-August Universität Göttingen\\
Bunsenstraße 3-5\\
D-37073 Göttingen\\
Deutschland}
\email{\href{mailto:clement.cren@mathematik.uni-goettingen.de}{clement.cren@mathematik.uni-goettingen.de}}

\begin{document}

\begin{abstract}
We show an isomorphism between an algebra which is naturally constructed from the Toeplitz algebra generated by d-shifts, and an ideal of the $C^*$-algebra of the $(2d+1)$-dimensional Heisenberg group. 
This is a particular case of a more general result for graded nilpotent Lie groups involving symbols in the filtered calculus. The proof presented here however only involves basic functional analysis while still showcasing the ideas of the proof in the general setting.
\end{abstract}

\maketitle

\section{Introduction}

Two important algebraic structures in the mathematical formalism of quantum mechanics are the Heisenberg groups and the Toeplitz algebras. The Heisenberg groups appear when describing a quantum mechanical system. This system is understood through the representations of the group. At the level of the Lie algebra, these representation involve the so called creation and annihilation operators. One can construct such a group from any symplectic vector space (i.e. from any classical mechanical system). The collection of all representations, understood from the group \(C^*\)-algebra, gives a quantization of the symplectic vector space.

On the other hand, Toeplitz algebras appear in the Berezin-Toeplitz quantization. This is  another approach to quantization which produces explicit operators from classical observables. While this approach is equivalent to the representations of the Heisenberg group  (thanks to a theorem of Stone and von Neumann), it has the advantage of being described only using bounded operators, the \(d\)-shifts\footnote{Here \(d\) denotes the dimension of the mechanical system.}. These shifts and their adjoints have a well known relation to the creation and annihilation operators. In this article, we deepen this relation to define an isomorphism between two \(C^*\)-algebras, one constructed from the Heisenberg group and another one constructed from Toeplitz algebras.

More precisely, let \(d\in \mathbb N\) and denote by \(\mathcal{T}_d\) the Toeplitz algebra with \(d\)-generators and \(H_{2d+1}\) the Heisenberg group of dimension \(2d+1\). Let \(C^*(H_{2d+1})\) be the corresponding group \(C^*\)-algebra and \(C^*_0(H_{2d+1})\triangleleft C^*(H_{2d+1})\) the kernel of the trivial representation. We define an action of the real numbers \(\R \act \mathcal{T}_d\) using the complex powers of the so-called number operator. Recall that there is a quotient map \(\mathcal{T}_d \to \CCC(\mathbb{S}^{2d-1})\). We can use it to consider the fibered product of two Toeplitz algebras \(\mathcal{T}_d\bigoplus_{\CCC(\mathbb{S}^{2d-1})} \mathcal{T}_d\) on which \(\R\) still acts. We show in Theorem \ref{Main Isom} that there is an isomorphism:
\[\left(\mathcal{T}_d\bigoplus_{\CCC(\mathbb{S}^{2d-1})} \mathcal{T}_d\right) \rtimes \R \isomto C^*_0(H_{2d+1}).\]

To construct this isomorphism, we use the structure of \(C^*_0(H_{2d+1})\) as a continuous field of \(C^*\)-algebras over \(\R\) \cite{Lee}. The technical part is then in the construction of the map, to show that the shifts defining the Toeplitz algebra preserve the continuity of these sections at \(0\). This specific part involves Heisenberg pseudodifferential operators. Since the rest of the proof and the techniques used are more standard operator algebras constructions, we reserve these technicalities to a specific and separate section \ref{Symbols technicalities}. Indeed, we intend the rest of this paper to be readable with only general knowledge in \(C^*\)-algebras. This is why we devote Sections \ref{ToeplitzIntroduction} and \ref{HeisenbergIntroduction} to the constructions of the Toeplitz algebras, the Heisenberg group \(C^*\)-algebras, as well as their respective exact sequences.

We do a slight breach of this vow of simplicity and state our results for Heisenberg groups of symplectic vector spaces. This has the advantage of showing that all our constructions are canonical and of geometric nature. This geometric aspect allows us in Section \ref{ContactExtension} to generalize our construction to contact manifolds. On these manifolds we can construct a locally trivial bundle of Heisenberg groups. We show that our construction carries on to this case. Our construction depends however on a choice of a supplementary line bundle for the contact distribution (i.e. a Reeb field). This leads to a discussion on the co-orientability of the contact structure and its implications in the present work.

\section{Toeplitz algebras and the symmetric Fock space}\label{ToeplitzIntroduction}

In this section we describe the construction of the symmetric Fock space and the algebra of Toeplitz operators on it following \cite{Arveson}.

Let $V$ be a $d$-dimensional complex hermitian vector space. Denote by $T(V) = \bigoplus_{n = 0}^{+\infty}V^{\otimes n}$ its tensor algebra with the natural hermitian metric obtained from the one on $V$ and $\F(V)$ its completion for the corresponding norm. Denote by $\Sym(V) = \bigoplus_{n = 0}^{+\infty} \Sym^n(V) \subset \F(V)$ the subspace of symmetric tensors and $\F^+(V) \subset \F(V)$ its closure called the symmetric Fock space.
We denote by $\Sym \colon \F(V) \to \F(V)$ the orthogonal projector onto $\F^+(V)$.
For $v \in V$ and $n \in \N$ we write $v^n := v\otimes v \otimes \cdots \otimes v \in \Sym^n(V)$. Notice that $\Sym^n(V) = \Span\{v^n, v \in V\}$. More generally for vectors $v_1,\cdots,v_n \in V$ we write $v_1\cdots v_n = \Sym(v_1\otimes \cdots\otimes v_n)$.

\begin{prop}If $e_1,\cdots,e_d$ form an orthonormal basis of $V$ then $e_{i_1}\cdots e_{i_n}, 1\leq i_1 \leq \cdots \leq i_n \leq d $ forms an orthogonal basis of $\Sym^n(V)$. Equivalently we can rearrange this product to get the basis $e_1^{\alpha_1}\cdots e_d^{\alpha_d}, \sum_{j = 1}^d \alpha_j = n$. Moreover we have:
$$\| e_1^{\alpha_1}\cdots e_d^{\alpha_d} \|^2 =\frac{1}{n!}\prod_{j = 1}^d \alpha_j!.$$
Denote by $|\alpha_1,\cdots,\alpha_d\rag$ the corresponding normalized vectors.
\end{prop}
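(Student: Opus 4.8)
The plan is to use the concrete realization of $\Sym$ on each finite level $V^{\otimes n}$ as the averaging projector over the symmetric group. Let $S_n$ act on $V^{\otimes n}$ by permuting tensor factors; since this action sends the orthonormal basis $\{e_{j_1}\otimes\cdots\otimes e_{j_n}\}$ to itself, each $\sigma\in S_n$ acts unitarily. First I would check that the restriction of $\Sym$ to $V^{\otimes n}$ equals $P_n := \frac{1}{n!}\sum_{\sigma\in S_n}\sigma$: this operator is idempotent by the usual group-averaging argument, self-adjoint because $\sigma^{*}=\sigma^{-1}$ and the sum runs over the whole group, and its range is exactly the space of $S_n$-fixed vectors, i.e. $\Sym^n(V)$. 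Hence $P_n$ is the orthogonal projector onto $\Sym^n(V)$ and agrees with $\Sym$ there.

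With this in hand, the spanning statement is immediate: applying $\Sym$ to the orthonormal basis of $V^{\otimes n}$ produces a spanning family of $\Sym^n(V) = \im(\Sym)$, and since $\Sym$ is invariant under permuting the tensor factors, it suffices to keep the ordered multi-indices $1\le i_1\le\cdots\le i_n\le d$. Grouping equal indices, these are precisely the vectors $e_1^{\alpha_1}\cdots e_d^{\alpha_d}$ with $\sum_j\alpha_j=n$.

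The computational core is orthogonality together with the norm formula. Writing $e_I = e_{i_1}\otimes\cdots\otimes e_{i_n}$ for an index sequence $I$, I would use that $\Sym$ is a self-adjoint idempotent to collapse both copies onto one factor: $\langle \Sym e_I, \Sym e_{I'}\rangle = \langle e_I, \Sym e_{I'}\rangle = \frac{1}{n!}\sum_{\sigma\in S_n}\langle e_I, \sigma\cdot e_{I'}\rangle$. Because the $e_j$ are orthonormal, each summand $\langle e_I, \sigma\cdot e_{I'}\rangle$ equals $1$ if $\sigma$ carries the sequence $I'$ onto $I$ and $0$ otherwise. Such a $\sigma$ exists only when $I$ and $I'$ are rearrangements of one another, i.e. define the same $\alpha$; this gives orthogonality of distinct basis vectors. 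For $I=I'$ the nonzero summands are exactly the $\sigma$ stabilizing the sequence $I$, which form the subgroup $S_{\alpha_1}\times\cdots\times S_{\alpha_d}$ permuting within each block of equal indices, of order $\prod_j\alpha_j!$; hence $\|e_1^{\alpha_1}\cdots e_d^{\alpha_d}\|^2 = \frac{1}{n!}\prod_{j=1}^d\alpha_j!$. Since these vectors are orthogonal and, by the norm formula, nonzero, they are linearly independent, and combined with spanning this yields the claimed orthogonal basis.

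None of the steps is genuinely hard; the only points demanding care are the identification $\Sym|_{V^{\otimes n}} = P_n$ (in particular the self-adjointness, which is where the orthonormality of the basis and the unitarity of the permutation action enter) and the combinatorial identification of the stabilizer of $I$ with the product of symmetric groups of order $\prod_j\alpha_j!$. Everything else is routine bookkeeping with the inner product on $V^{\otimes n}$.
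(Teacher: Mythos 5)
Your argument is correct and follows essentially the same route as the paper: realize the symmetrizer on $V^{\otimes n}$ as the average $\frac{1}{n!}\sum_{\sigma\in S_n}\sigma$, collapse the inner product of two symmetrized basis tensors onto one factor, and count the permutations fixing the index sequence (the stabilizer of order $\prod_j\alpha_j!$) to get both orthogonality and the norm formula. The paper's proof is just a terser version of the same computation, so nothing further is needed.
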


\begin{proof}
    The orthogonality is clear from the construction of the metric. To compute the norm, one needs to compute the inner product of \(e_1^{\alpha_1}\otimes \cdots \otimes e_d^{\alpha_d}\) against any any pure tensor obtained from permuting the term. This inner product is non zero (and equal to 1) if and only if the permutation only swaps each \(e_k\) with another \(e_k\). Taking the sum over all the permutations and averaging, we get the result.
\end{proof}

Consider for $1\leq j \leq d$ the shift operator:
$$S_j := \Sym(e_j \otimes \cdot) \colon \F^+(V) \to \F^+(V).$$

\begin{lem}For $1\leq j \leq d$ and $y_1,\cdots, y_n \in V$ we have:
\[S_j^*(y_1\cdots y_n) = \frac{1}{n}\sum_{k = 1}^n \lag y_k, e_j \rag y_1\cdots \hat{y_k}\cdots y_n.\]
Here \(\hat{y}_k\) means we omit this term in the tensor.\end{lem}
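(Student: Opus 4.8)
The plan is to verify the identity weakly, by pairing both sides against an arbitrary element of the symmetric Fock space and using the defining relation $\langle S_j^*\xi,\eta\rangle = \langle \xi, S_j\eta\rangle$ of the adjoint. Since $S_j$ sends $\Sym^m(V)$ into $\Sym^{m+1}(V)$, its adjoint sends $\Sym^n(V)$ into $\Sym^{n-1}(V)$; hence $S_j^*(y_1\cdots y_n)$ is entirely determined by its inner products against the pure products $z_1\cdots z_{n-1}$, which span $\Sym^{n-1}(V)$. It therefore suffices to compute $\langle y_1\cdots y_n,\, S_j(z_1\cdots z_{n-1})\rangle = \langle y_1\cdots y_n,\, e_j z_1\cdots z_{n-1}\rangle$ and to match it with the pairing of the claimed right-hand side against $z_1\cdots z_{n-1}$.

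The key computational input is the explicit inner product of two pure symmetric tensors. Writing $\Sym(v_1\otimes\cdots\otimes v_n) = \frac{1}{n!}\sum_{\sigma\in\mathfrak{S}_n} v_{\sigma(1)}\otimes\cdots\otimes v_{\sigma(n)}$ and using that $\Sym$ is a self-adjoint projection, one obtains the permanent formula
\[\langle v_1\cdots v_n,\, w_1\cdots w_n\rangle = \frac{1}{n!}\sum_{\sigma\in\mathfrak{S}_n}\prod_{i=1}^n \langle v_i, w_{\sigma(i)}\rangle,\]
of which the norm computation in the preceding Proposition is the special case $v_i = w_i$. Here one must fix which argument the hermitian product is conjugate-linear in; since the statement is written with $\langle y_k, e_j\rangle$, I place the $y$'s in the first slot throughout.

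Applying this with $(w_1,\dots,w_n) = (e_j, z_1,\dots,z_{n-1})$ and grouping the permutations $\sigma$ according to the index $k = \sigma^{-1}(1)$ for which $y_k$ is paired with $e_j$, each group contributes the factor $\langle y_k, e_j\rangle$ times a sum over the bijections of the remaining indices. That residual sum is exactly $(n-1)!\,\langle y_1\cdots\hat{y_k}\cdots y_n,\, z_1\cdots z_{n-1}\rangle$ by the same permanent formula in degree $n-1$. Collecting the constants, $\frac{1}{n!}\cdot(n-1)! = \frac1n$, yields
\[\langle y_1\cdots y_n,\, e_j z_1\cdots z_{n-1}\rangle = \Big\langle \tfrac1n\sum_{k=1}^n \langle y_k,e_j\rangle\, y_1\cdots\hat{y_k}\cdots y_n,\; z_1\cdots z_{n-1}\Big\rangle,\]
which is the desired formula since $z_1\cdots z_{n-1}$ was arbitrary.

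The argument is essentially a bookkeeping exercise, so the only genuine obstacle is tracking the combinatorial normalizations: retaining the factor $1/n!$ from the degree-$n$ inner product and the factor $(n-1)!$ produced by re-summing over the permutations of the surviving $n-1$ indices, and checking that they combine to $1/n$. A secondary but necessary point to record is that the pure products span the relevant symmetric powers (as noted after the definition, $\Sym^n(V) = \Span\{v^n\}$), so that testing against the $z_1\cdots z_{n-1}$ genuinely determines $S_j^*(y_1\cdots y_n)$.
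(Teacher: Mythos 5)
Your proof is correct. The paper actually states this lemma without proof, so there is nothing to compare it against directly; but your argument is the natural one and is fully consistent with the paper's conventions. The permanent formula
\[\langle v_1\cdots v_n, w_1\cdots w_n\rangle = \frac{1}{n!}\sum_{\sigma\in\mathfrak{S}_n}\prod_{i=1}^n\langle v_i,w_{\sigma(i)}\rangle\]
is exactly the computation underlying the norm formula in the preceding proposition (self-adjointness and idempotence of the projector $\Sym$), and your grouping by the index paired with $e_j$ correctly produces the factor $(n-1)!/n! = 1/n$. Two minor points worth making explicit: first, $S_j(z_1\cdots z_{n-1}) = e_jz_1\cdots z_{n-1}$ requires the (easy) observation that $\Sym(e_j\otimes\Sym(w)) = \Sym(e_j\otimes w)$, since $\Sym$ of a pure tensor does not depend on the order of its factors; second, pulling the scalar $\langle y_k,e_j\rangle$ out of the first slot unconjugated requires the inner product to be linear in its first argument, which is the convention forced by the way the lemma is written. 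As a sanity check, your formula specializes on $e_1^{\alpha_1}\cdots e_d^{\alpha_d}$ to $S_j^*|\alpha\rangle = \sqrt{\alpha_j/|\alpha|}\,|\alpha_1,\cdots,\alpha_j-1,\cdots,\alpha_d\rangle$ after normalization, matching the proposition that follows the lemma in the paper.
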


\begin{prop}Let $1\leq j \leq d$ and $\alpha \in \N^d$. We have the formulas:
$$S_j|\alpha_1,\cdots,\alpha_d\rag = \sqrt{\frac{\alpha_j + 1}{|\alpha| + 1}} |\alpha_1,\cdots, \alpha_j + 1,  \cdots ,\alpha_d\rag$$
$$S_j^*|\alpha_1,\cdots,\alpha_d\rag = \sqrt{\frac{\alpha_j}{|\alpha|}} |\alpha_1,\cdots, \alpha_j - 1,  \cdots ,\alpha_d\rag.$$
In the second line, we mean in particular that if $\alpha_j = 0$, then $S_j|\alpha\rag = 0$.
In particular the operators $S_j$ are injective and the $S_j^*$ surjective.
\end{prop}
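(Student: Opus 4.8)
The plan is to carry out everything first on the unnormalized symmetric products $w_\alpha := e_1^{\alpha_1}\cdots e_d^{\alpha_d}$ and to normalize only at the very end, using the norm formula $\|w_\alpha\|^2 = \frac{1}{|\alpha|!}\prod_{k}\alpha_k!$ (with $|\alpha|=\sum_k\alpha_k$) from the preceding Proposition. Since $w_\alpha$ is already symmetric, $\Sym(e_j\otimes w_\alpha)$ is simply the symmetric product $e_j\cdot w_\alpha$, so that $S_j w_\alpha = w_{\alpha+\mathbf{1}_j}$, where $\alpha+\mathbf{1}_j$ denotes $\alpha$ with its $j$-th entry raised by one. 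Thus on unnormalized vectors the shift acts by merely incrementing the $j$-th exponent, with no combinatorial factor at all.

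Passing to the normalized basis $|\alpha\rag = w_\alpha/\|w_\alpha\|$, I would write $S_j|\alpha\rag = \|w_\alpha\|^{-1}w_{\alpha+\mathbf{1}_j} = \frac{\|w_{\alpha+\mathbf{1}_j}\|}{\|w_\alpha\|}\,|\alpha+\mathbf{1}_j\rag$, so that the entire content of the first formula is a ratio of two norms. Inserting the norm formula, the product $\prod_k\alpha_k!$ cancels up to the single factor $\alpha_j+1$ coming from $(\alpha_j+1)!/\alpha_j!$, while the degree factorials contribute $|\alpha|!/(|\alpha|+1)! = 1/(|\alpha|+1)$; hence $\|w_{\alpha+\mathbf{1}_j}\|^2/\|w_\alpha\|^2 = (\alpha_j+1)/(|\alpha|+1)$, which is exactly the claimed coefficient. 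The only care needed here is this bookkeeping of factorials.

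For the adjoint I would recompute nothing and instead use that $\{|\alpha\rag\}$ is an orthonormal basis: expanding $S_j^*|\alpha\rag$ in this basis, its $|\beta\rag$-coefficient equals the inner product of $S_j|\beta\rag$ with $|\alpha\rag$ (all matrix coefficients being positive reals, so the inner-product convention is immaterial), which is nonzero only when $\beta+\mathbf{1}_j=\alpha$, i.e. $\beta=\alpha-\mathbf{1}_j$ and in particular $\alpha_j\geq 1$. Reading off the value from the first formula with the roles of $\alpha$ and $\beta$ interchanged gives $S_j^*|\alpha\rag=\sqrt{\alpha_j/|\alpha|}\,|\alpha-\mathbf{1}_j\rag$, vanishing when $\alpha_j=0$. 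Alternatively one applies the Lemma directly to $w_\alpha$: exactly $\alpha_j$ of the $|\alpha|$ tensor slots carry $e_j$, each contributing $w_{\alpha-\mathbf{1}_j}$, so $S_j^*w_\alpha=\frac{\alpha_j}{|\alpha|}w_{\alpha-\mathbf{1}_j}$, and normalizing reproduces the same $\sqrt{\alpha_j/|\alpha|}$.

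Injectivity of $S_j$ is then immediate, since it maps the orthonormal basis to the pairwise orthogonal family of nonzero vectors $\sqrt{(\alpha_j+1)/(|\alpha|+1)}\,|\alpha+\mathbf{1}_j\rag$, so no nontrivial vector is annihilated. For $S_j^*$, the second formula shows that every basis vector $|\gamma\rag$ lies in its image, being $\sqrt{(|\gamma|+1)/(\gamma_j+1)}\,S_j^*|\gamma+\mathbf{1}_j\rag$; hence $S_j^*$ has dense range, which is the same statement as $\ker S_j=0$ through $\overline{\im S_j^*}=(\ker S_j)^\perp$. I expect the only genuinely delicate point to be here, not in any computation: because the coefficients $\sqrt{(\alpha_j+1)/(|\alpha|+1)}$ are not bounded away from $0$ once $d\geq 2$, the operator $S_j$ fails to be bounded below and the range of $S_j^*$ is dense but not closed, so surjectivity of $S_j^*$ is to be understood as density of its range (for $d=1$ the $S_j$ are genuine isometries and $S_j^*$ is onto in the strict sense).
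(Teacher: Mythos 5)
Your proof is correct and follows essentially the same route as the paper's (very terse) argument: compute $S_j$ on the unnormalized products $e_1^{\alpha_1}\cdots e_d^{\alpha_d}$, where it simply increments the $j$-th exponent, then normalize using the norm formula of the preceding Proposition; your derivation of the adjoint by duality (or via the Lemma) and of injectivity correctly fills in details the paper leaves implicit. Your closing caveat is also well taken and worth recording: for $d\geq 2$ the coefficients $\sqrt{(\alpha_j+1)/(|\alpha|+1)}$ accumulate at $0$, so $S_j$ is not bounded below, its range is not closed, and $S_j^*$ has dense but non-closed range --- for instance $\sum_{n\geq 1}\frac{1}{n}|0,n,0,\cdots,0\rag$ has no preimage under $S_1^*$ --- so the statement's ``surjective'' must indeed be read as ``has dense range'' (literal surjectivity holds only for $d=1$, where $S_1$ is an isometry).
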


\begin{proof}
    For \(\alpha\in \N^d\), we have \(S_j(e^{\alpha}) = e_1^{\alpha_1}\cdots e_j^{\alpha_j+1}\cdots e_d^{\alpha_d}.\) Normalizing on both sides gives the result.
\end{proof}

\begin{mydef}The $d$-dimensional Toeplitz algebra $\T(V)$ is the $C^*$-algebra of bounded operators on $\F^+(V)$ generated by the $S_j, 1\leq j \leq d$.\end{mydef}

In fact the Toeplitz algebra contains all the shift operators. Indeed if $v \in V$ then we can write $v = \sum_{j = 1}^d \lambda_j e_j$ and then $S_v = \Sym(v\otimes\cdot) = \sum_{j = 1}^d \lambda_j S_j$. Therefore the definition of $\T(V)$ does not depend on the choice of the basis.

Moreover, \(\T(V)\) is unital. We indeed have the relation 

\[\sum_{j = 1}^d S_j^*S_j |\alpha\rag = \frac{d + |\alpha|}{1+|\alpha|}|\alpha\rag, \alpha\in \N^d, \]
from which we can recover \(\Id\) using continuous functional calculus. We have \(\Id = f\left(\sum_{j = 1}^d S_j^*S_j\right)\) with \(f \colon x \mapsto \frac{d-x}{x-1}.\) Notice that for \(d = 1\), we directly have \(S_1^*S_1 = \Id\).

\begin{prop}If $U \colon V \to W$ is a unitary isomorphism. We can extend \(U\) to a map $\tilde{U} \colon \F^+(V) \to \F^+(W)$. This map is a unitary isomorphism  and intertwines the algebras $\T(V)$ and $\T(W)$. More precisely if $v \in V$ then $\tilde{U}\circ S_v = S_{U(v)}\circ \tilde{U}$. In particular we get a unitary isomorphism $\Ad(\tilde{U}) \colon \T(V) \isomto \T(W)$.
\end{prop}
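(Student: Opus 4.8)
The plan is to construct $\tilde{U}$ by functoriality of the tensor powers. For each $n$, the unitary $U\colon V\to W$ induces $U^{\otimes n}\colon V^{\otimes n}\to W^{\otimes n}$, and since $\lag Ux, Uy\rag_W = \lag x,y\rag_V$ for all $x,y\in V$, the map $U^{\otimes n}$ preserves the induced hermitian metric on pure tensors, hence is a unitary isomorphism of $V^{\otimes n}$ onto $W^{\otimes n}$. Taking the orthogonal direct sum over $n$ yields an isometric isomorphism $T(U)\colon T(V)\to T(W)$ on the algebraic tensor algebras, and because each component has norm $1$ this extends by continuity to a unitary isomorphism of the completions $\F(V)\isomto\F(W)$.

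Next I would show this descends to the symmetric Fock spaces. The key observation is that $U^{\otimes n}$ commutes with the action of the symmetric group permuting the tensor factors: for $\sigma\in\mathfrak{S}_n$ one has $U^{\otimes n}(x_{\sigma(1)}\otimes\cdots\otimes x_{\sigma(n)}) = (Ux_{\sigma(1)})\otimes\cdots\otimes(Ux_{\sigma(n)})$, so averaging over $\sigma$ gives $U^{\otimes n}\circ\Sym = \Sym\circ U^{\otimes n}$ on $V^{\otimes n}$. Passing to the direct sum and the completion, the unitary of the previous paragraph commutes with the symmetrization projector and therefore restricts to a unitary isomorphism $\tilde{U}\colon\F^+(V)\isomto\F^+(W)$ carrying symmetric tensors to symmetric tensors, with $\tilde U(y_1\cdots y_n) = (Uy_1)\cdots(Uy_n)$.

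Finally I would verify the intertwining relation on the dense subspace of symmetric tensors and extend by continuity. For $v\in V$ and $y_1,\dots,y_n\in V$, the definition of the shift and the formula for $\tilde U$ give
\[\tilde{U}\circ S_v(y_1\cdots y_n) = \tilde U\big(\Sym(v\otimes y_1\otimes\cdots\otimes y_n)\big) = \Sym(Uv\otimes Uy_1\otimes\cdots\otimes Uy_n),\]
which is exactly $S_{Uv}\big((Uy_1)\cdots(Uy_n)\big) = S_{Uv}\circ\tilde U(y_1\cdots y_n)$. Hence $\tilde{U}\circ S_v = S_{Uv}\circ\tilde{U}$, i.e. $\Ad(\tilde U)(S_v) = S_{Uv}$. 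Since $v\mapsto Uv$ is a bijection of $V$ onto $W$, the unitary conjugation $\Ad(\tilde U)$ sends the generating set $\{S_v : v\in V\}$ onto the generating set $\{S_w : w\in W\}$; as a conjugation by a unitary it is a $*$-homomorphism, and its inverse is $\Ad(\tilde U^{-1}) = \Ad(\widetilde{U^{-1}})$, so it restricts to a $*$-isomorphism $\T(V)\isomto\T(W)$. The only point requiring genuine care is the commutation with $\Sym$ in the second paragraph, which is what guarantees that $\tilde U$ is well defined on $\F^+(V)$ and that the conjugation does not leave the Toeplitz algebra; everything else is a routine extension-by-continuity argument.
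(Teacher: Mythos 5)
Your proof is correct and follows essentially the same route as the paper: both define $\tilde U$ by second quantization, $\tilde U(y_1\cdots y_n)=(Uy_1)\cdots(Uy_n)$, and check the intertwining $\tilde U S_v = S_{Uv}\tilde U$ on symmetric tensors. You merely supply more detail (passing through $U^{\otimes n}$ on the full tensor algebra and its commutation with $\Sym$) where the paper appeals to functoriality and $\tilde U^*=\widetilde{U^*}$.
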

\begin{proof}
Let $U$ be such an unitary map, $Ue_j = f_j, 1 \leq j \leq d$, $(f_j)_{1\leq j \leq d}$ being an orthonormal basis of a complex hermitian vector space $W$. Let $\tilde{U} \colon \F^+(V) \to \F^+(W)$ be the map defined by 
\[\forall v_1,\cdots,v_n\in V, \tilde{U}(v_1\cdots v_n) = (Uv_1)\cdots(Uv_n).\] 
This process applies to any linear map from \(V\) to \(W\) and preserves the composition. We also have $\tilde{U}^* = \widetilde{U^*}$ and so $\tilde{U}$ is unitary. If $S_j, S'_j$ denote the shift operators for the bases $(e_1,\cdots,e_d)$ and $(f_1,\cdots,f_d)$ respectively then we have $\forall 1 \leq j \leq d, \tilde{U}S_j\tilde{U}^* = S'_j$.
\end{proof}

\begin{lem}We have the inclusion $\mathcal{K}(\F^+(V)) \subset \T(V)$.\end{lem}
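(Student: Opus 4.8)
The plan is to produce explicitly every rank-one operator on the standard orthonormal basis $|\alpha\rag$, $\alpha\in\N^d$, as an element of $\T(V)$. Since the finite linear combinations of such rank-one operators are dense in $\mathcal{K}(\F^+(V))$ and $\T(V)$ is norm-closed, this yields the inclusion at once. The whole argument rests on first locating the rank-one projection onto the vacuum $|0\rag := |0,\cdots,0\rag$ inside $\T(V)$.

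First I would compute the action of $\sum_{j=1}^d S_jS_j^*$. Using the two formulas of the preceding Proposition, $S_j^*|\alpha\rag = \sqrt{\alpha_j/|\alpha|}\,|\alpha_1,\cdots,\alpha_j-1,\cdots,\alpha_d\rag$ followed by $S_j$ gives $S_jS_j^*|\alpha\rag = \tfrac{\alpha_j}{|\alpha|}|\alpha\rag$ for $|\alpha|\geq 1$, while $S_jS_j^*|0\rag = 0$. Summing over $j$, the operator $\sum_{j=1}^d S_jS_j^*$ acts as the identity on every $|\alpha\rag$ with $|\alpha|\geq 1$ and annihilates the vacuum. Since $\Id\in\T(V)$ was established above, the self-adjoint element $P := \Id - \sum_{j=1}^d S_jS_j^*$ lies in $\T(V)$ and is precisely the orthogonal projection $|0\rag\lag 0|$ onto the vacuum line.

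Next, for a multi-index $\alpha$ I would set $S^\alpha := S_1^{\alpha_1}\cdots S_d^{\alpha_d}\in\T(V)$. Iterating the first formula of the Proposition shows $S^\alpha|0\rag = c_\alpha\,|\alpha\rag$ for a strictly positive constant $c_\alpha$, a product of the factors $\sqrt{(\cdots+1)/(\cdots+1)}$, each of which is nonzero. Consequently, for all $\alpha,\beta\in\N^d$ the element $S^\alpha P (S^\beta)^*\in\T(V)$ equals $c_\alpha c_\beta\,|\alpha\rag\lag\beta|$, a nonzero scalar multiple of the rank-one operator $|\alpha\rag\lag\beta|$. Dividing by $c_\alpha c_\beta$ puts every such rank-one operator in $\T(V)$, and passing to the closed linear span gives $\mathcal{K}(\F^+(V))\subset\T(V)$.

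The only genuine computation, and hence the main obstacle, is the identity $\sum_{j=1}^d S_jS_j^* = \Id - |0\rag\lag 0|$; this is exactly where the explicit action from the Proposition is needed. Everything after that is the standard observation that a $C^*$-algebra acting irreducibly and containing the vacuum projection must contain every compact operator, realized here by the concrete formula $S^\alpha P(S^\beta)^*$.
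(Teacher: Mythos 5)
Your proof is correct, but it takes a genuinely different route from the paper. You locate the vacuum projection explicitly via the identity $\sum_{j=1}^d S_jS_j^* = \Id - |0\rag\lag 0|$ (which follows at once from the displayed formulas for $S_j$ and $S_j^*$ on the normalized basis, together with $\Id\in\T(V)$ established just before the lemma), and then manufacture every matrix unit $|\alpha\rag\lag\beta|$ as a positive multiple of $S^{\alpha}P(S^{\beta})^*$; density of their span in $\mathcal{K}(\F^+(V))$ finishes the argument. The paper instead computes the commutators $[S_j^*,S_i]=(1+N)^{-1}(\delta_{i,j}-S_iS_j^*)$, observes that $(1+N)^{-1}$ is compact and lies in $\T(V)$ by functional calculus, and concludes that the closed ideal of $\T(V)$ generated by these commutators is all of $\mathcal{K}(\F^+(V))$ -- an argument that implicitly invokes the irreducibility of $\T(V)$ on $\F^+(V)$ (a $C^*$-algebra acting irreducibly and containing one nonzero compact operator contains them all). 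Your version is more elementary and entirely explicit, avoiding any appeal to irreducibility. What the paper's approach buys in exchange is the stronger statement $[\T(V),\T(V)]=\mathcal{K}(\F^+(V))$, which is not part of the lemma but is the key input to Proposition \ref{ToepExactSeq}, where it forces the quotient $\faktor{\T(V)}{\mathcal{K}(\F^+(V))}$ to be abelian; if one adopts your proof of the lemma, that identity would still have to be established separately (your matrix-unit construction gives $\mathcal{K}\subset[\T,\T]$ easily, e.g.\ from the commutator formula itself, so this is a minor addendum rather than a gap).
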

\begin{proof}
We fix a basis $e_1,\cdots,e_d \in V$ and let $S_1,\cdots, S_d$ be the corresponding shifts operators. For $1\leq i,j\leq d$ we then have the relations:
$$[S_j,S_i] = 0, [S_j^*,S_i] = (1+N)^{-1}(\delta_{i,j} - S_iS_j^*).$$
Here $N$ denotes the number operator. It is an unbounded operator which in the orthonormal basis $(|\alpha\rag)_{\alpha \in \N^d}$ reads:
$$\forall \alpha \in \N^d, N|\alpha\rag = \left(\sum_{j=1}^d \alpha_j\right)|\alpha\rag.$$
We have the relation $\sum_{j = 1}^d S_j^*S_j = (d+N)(1+N)^{-1}$ so $N$ can be recovered from the shifts using functional calculus.
Since $N$ is diagonal we clearly see that $(1+N)^{-1}$ is compact. Now since compact operators form a 2-sided closed ideal we get that $[\T(V),\T(V)] = \mathcal{K}(\F^+(V)) \subset \T(V)$.
\end{proof}

\begin{prop}\label{ToepExactSeq}There is an exact sequence:
$$\xymatrix{0\ar[r] & \mathcal{K}(\F^+(V)) \ar[r] & \T(V) \ar[r] & \CCC_0(\mathbb{S}^*V)\ar[r] & 0.}$$
\end{prop}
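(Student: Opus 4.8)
The plan is to obtain the sequence as the extension of $\T(V)$ by the ideal $\mathcal{K}(\F^+(V))$, and then to identify the quotient with $\CCC_0(\mathbb{S}^*V)$ by the Gelfand transform, using the unitary symmetry of the construction to pin down the underlying space. First I would dispose of exactness at the two outer spots for free: by the preceding lemma $\mathcal{K}(\F^+(V)) \subset \T(V)$, and since $\mathcal{K}(\F^+(V))$ is a closed two-sided ideal of all of $B(\F^+(V))$, it is in particular one of $\T(V)$. Writing $\sigma \colon \T(V) \to A := \T(V)/\mathcal{K}(\F^+(V))$ for the quotient map, exactness at $\mathcal{K}(\F^+(V))$ and at $\T(V)$ holds by construction, so the whole content is the identification $A \cong \CCC_0(\mathbb{S}^*V)$ (note $\mathbb{S}^*V$ is compact, so $\CCC_0(\mathbb{S}^*V) = \CCC(\mathbb{S}^*V)$).

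Next I would establish that $A$ is commutative. The relations from the lemma give $[S_i,S_j] = 0$ and $[S_i^*,S_j] = (1+N)^{-1}(\delta_{i,j} - S_iS_j^*) \in \mathcal{K}(\F^+(V))$, so the generators $s_j := \sigma(S_j)$ are commuting normal elements; as they generate $A$, the algebra is commutative. Since $\F^+(V)$ is infinite dimensional, $\Id \notin \mathcal{K}(\F^+(V))$, whence $A \neq 0$ and $A \cong \CCC(X)$ for a non-empty compact Hausdorff $X$. Applying $\sigma$ to $\sum_{j} S_j^*S_j = (d+N)(1+N)^{-1}$ and using compactness of $(1+N)^{-1}$ yields $\sum_{j} s_j^*s_j = 1$ in $A$. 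Through the Gelfand transform this says that every character $\chi \in X$ satisfies $\sum_j |\chi(s_j)|^2 = 1$, so $\Phi := (\hat s_1,\dots,\hat s_d) \colon X \to \CC^d$ takes values in the unit sphere and defines a continuous map $\Phi \colon X \to \mathbb{S}^*V \cong \mathbb{S}^{2d-1}$. A character is determined by its values on the $s_j$ and their adjoints, so $\Phi$ is injective, hence a homeomorphism onto the compact, thus closed, subset $\Phi(X) \subseteq \mathbb{S}^*V$.

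The only real point is to show $\Phi$ is onto, and here I would invoke the proposition on unitary invariance. The action of $\U(V)$ on $\T(V)$ by $\Ad(\tilde U)$ preserves $\mathcal{K}(\F^+(V))$ (as $\tilde U$ is unitary), hence descends to an automorphism of $A$; on generators $\Ad(\tilde U)(S_j) = S_{Ue_j} = \sum_k \langle Ue_j, e_k\rangle S_k$, so the descended action is the standard linear representation of $\U(V)$ on $\Span\{s_1,\dots,s_d\}$, and $\Phi$ intertwines it with the standard $\U(V)$-action on $\mathbb{S}^*V$. Since $\U(V)$ acts transitively on $\mathbb{S}^*V$ and $\Phi(X)$ is a non-empty, closed, invariant subset, $\Phi(X) = \mathbb{S}^*V$. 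Therefore $\Phi$ is a homeomorphism and $A \cong \CCC(\mathbb{S}^*V)$, which closes the sequence.

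I expect this surjectivity to be the main obstacle: one must certify that the joint essential spectrum of $(S_1,\dots,S_d)$ fills the entire sphere rather than a proper closed subset. The equivariance argument above handles this cleanly, and it is the route I would favour because it makes the geometric nature of $\mathbb{S}^*V$ manifest. Alternatively one can argue by hand: for $\lambda \in \mathbb{S}^*V$ choose $U \in \U(V)$ with $Ue_1 = \lambda$, and transport by $\tilde U$ the classical approximate eigenvectors of the unilateral shift, which is what $S_1$ restricts to on $\Span\{\,|n,0,\dots,0\rangle : n \in \N\,\}$ while the remaining $S_j$ shrink to $0$ there; this exhibits $\lambda$ in the joint spectrum directly. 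I anticipate the bookkeeping in this second approach to be heavier than the symmetry argument, so I would relegate it to a remark.
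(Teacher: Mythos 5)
Your proposal is correct and follows essentially the same route as the paper's own proof: commutativity of the quotient from the commutator relations, the identity $\sum_j s_j^*s_j = 1$ placing the spectrum inside $\mathbb{S}^{2d-1}$, and the transitive $\U(V)$-action together with non-emptiness to force equality. The only additions are minor bookkeeping the paper leaves implicit (injectivity of the Gelfand map on characters, non-triviality of the quotient) and an optional direct argument via approximate eigenvectors, neither of which changes the substance.
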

\begin{proof}
From the previous proof we have $[\T(V),\T(V)] = \mathcal{K}(\F^+(V))$. Therefore $\faktor{\T(V)}{\mathcal{K}(\F^+(V))}$ is an abelian $C^*$-algebra so by Gelfand duality it is an algebra of the type $\CCC(X)$ for some  compact Hausdorff space $X$ ($\T(V)$ is unital, which implies compactness of the spectrum). Let us take an orthonormal basis $e_1,\cdots,e_d \in V$ and denote by $S_1,\cdots,S_d$ the corresponding shift operators. The quotient is generated by the images $\overline{S_1},\cdots,\overline{S_d}$. These operators are mutually commuting and normal, so $X$ is their joint spectrum. As such, we can see \(X\) as a subset of $\CC^d$. Moreover, we have the relation 
\[\sum_{j = 1}^d \overline{S_j^*}\overline{S_j} = 1 + (d-1)\overline{(1+N)^{-1}} = 1,\] 
so $X \subset \mathbb{S}^{2d-1}$. Now this subset has to be non-empty. Let us show that the (transitive) action $\mathcal{U}(d) \act \mathbb{S}^{2d-1}$ preserves $X$. If $U \in \mathcal{U}(d)$ then from our choice of basis corresponds a unitary $U_1 \in \mathcal{U}(V)$. By the above functoriality results we get a unitary isomorphim $\tilde{U_1}$ of $\F^+(V)$ which preserves $\T(V)$. Now if $U = (u_{i,j})_{1\leq i,j\leq d}$ then we have:
$$\Ad(\tilde{U_1})S_i = \sum_{j = 1}^d \overline{u_{j,i}}S_j.$$
Therefore $X$ is stable under the action $\mathcal{U}(d) \act \mathbb{S}^{2d-1}$ and since it is non-empty and the action transitive we have $X=\mathbb{S}^{2d-1}$. The identification $\mathbb{S}^{2d-1} \cong \mathbb{S}^*V$ is also done through the choice of basis.
\end{proof}

\begin{rem}\label{QuotientToeplitz}Let $v \in V$ and denote by $\overline{S_v}$ the image of $S_v \in \T(V)$ in the quotient algebra. To $v$ we associate a function on $V^*$ using duality. This function is linear and in particular homogeneous of degree $1$. As such, it factors to a function $\tilde{v} \in \CCC(\mathbb{S}^*V)$. The reasoning in the proof defines a unitary isomorphism $\faktor{\T(V)}{\mathcal{K}(\F^+(V))} \isomto \CCC(\mathbb{S}^*V)$ by sending $\overline{S}_v$ to $\tilde{v}$. Through this isomorphism the operator $\overline{S_v^*}$ is sent to $\overline{\tilde{v}}$, the pointwise complex conjugate of the function $\tilde{v}$.\end{rem}

\section[Heisenberg groups and their C-algebras]{Heisenberg groups and their \texorpdfstring{$C^*-$}-algebras}\label{HeisenbergIntroduction}

Let $(V,\omega)$ be a real symplectic vector space of dimension $2d$. Define its corresponding Heisenberg Lie algebra $\heis(V,\omega)$ as the vector space $V\oplus \R$ endowed with Lie bracket given by $[v, w] = \omega(v,w)Z$, where $Z$ is a basis of $\R$. If $(e_j,f_j)_{1\leq j \leq d}$ is a symplectic basis for $(V,\omega)$ then we have $[e_j,f_k] = \delta_{j,k}Z$ and the other brackets vanish. We denote by $\Heis(V,\omega)$ the corresponding connected, simply connected Lie group. Since $\heis(V,\omega)$ is nilpotent, we can use it as the underlying space for the group as well and the product is defined through the Baker-Campbell-Haussdorff formula.

To understand the structure of its $C^*$-algebra we need to understand the unitary irreducible representations of the group. Equivalently they correspond to representations of the Lie algebra $\heis(V,\omega)$ on a complex Hilbert space by anti-self-adjoint unbounded operators. We first have the characters, parameterized by $V^*$, which correspond to the representations $\pi \in \widehat{\Heis(V,\omega)}$ with $\pi(Z) = 0$.
Now if $\pi(Z) \neq 0$, a theorem of Stone and von Neumann states that we necessarily have $\pi(Z) = \ii \lambda \Id, \lambda \in \R$ and for a given $\lambda$ there is only one such unitary irreducible reprensentation up to conjugation (moreover the underlying Hilbert space is necessarily infinite dimensional), we denote it by $\pi_{\lambda}$ (indifferently for the group, the Lie algebra or the group $C^*$-algebra). We give an explicit construction of this representation using the symmetric Fock space.

Let $J \colon V \to V$ be a compatible complex structure, i.e. $J^2 = -\Id$, $\omega(J\cdot,\cdot)$ is positive definite and $\omega(J\cdot,J\cdot) = \omega$. In a symplectic basis as before, a choice of compatible complex structure would be $Je_j = -f_j, Jf_j =  e_j$.
Consider the complexified vector space $V\otimes \CC$ and $V^{1,0}, V^{0,1}$ the eigenspaces of \(J\) for $\ii$ and $-\ii$ respectively. We endow $V^{1,0}$ with the Hermitian metric $\lag\cdot,\cdot\rag = \ii\omega(\cdot,\bar{\cdot})$ restricts to a hermitian metric on $V^{1,0}$. From this we get the symmetric Fock space $\F^+(V^{1,0})$ as in the previous section. Now let $W_1,\cdots, W_d$ be an orthonormal basis for $V^{1,0}$. We have $\heis(V,\omega) \otimes \CC = V^{1,0} \oplus V^{0,1} \oplus \CC Z$. It is spanned as a vector space by $W_j, \bar{W_j}, Z, 1 \leq j \leq d$ and we have the relations $[W_j,\bar{W_j}] = -\ii Z$, the other Lie brackets being equal to $0$. To construct $\pi_{\lambda}, \lambda > 0$ we just need to construct $\pi = \pi_1$ and then set $\pi_{\lambda}(W_j) = \sqrt{\lambda}\pi(W_j), \pi_{\lambda}(\bar{W_j}) = \sqrt{\lambda}\pi(\bar{W_j}), 1 \leq j \leq d$. This indeed ensures that $\pi_{\lambda}(Z) = \ii \lambda \Id$ as long as the relation is satisfied for \(\lambda = 1\).
Likewise we can obtain every $\pi_{\lambda}$ with $\lambda < 0$ from $\pi_{-1}$. The later can itself be obtained from $\pi_1$ for the group $\Heis(V,-\omega)$. This manipulation reverses the orientation of the symplectic vector space, and in particular reverses the roles of $V^{1,0}$ and $V^{0,1}$, we would then be acting on $\F^+(V^{0,1})$.

Now for the representation $\pi$, the operators $\pi(W_j), \pi(\bar{W_j})$ satisfy the canonical commutation relations: $[\pi(W_j),\pi(\bar{W_j})] = \pi(-\ii Z) = \Id$. They are thus related to the so called creation and annihilation operators. Denote by $A_j$ the annihilation operator with $1\leq j \leq d$. Since for every $1\leq j \leq d$ we need the relations $\pi(\overline{W_j}) = -\pi(W_j)^*$ then we set $\pi(\overline{W_j}) = \ii A_j$ and $\pi(W_j) = \ii A_j^*$. The creation and annihilation operators are defined as follows:
\begin{align*}
A_j|\alpha\rag &= \sqrt{\alpha_j}|\alpha_1,\cdots,\alpha_j-1,\cdots,\alpha_d\rag\\
A^*_j|\alpha\rag &= \sqrt{\alpha_j+1}|\alpha_1,\cdots,\alpha_j+1,\cdots,\alpha_d\rag
\end{align*}

One can check that we indeed have the canonical commutation relations: 
\[\forall 1 \leq j,k\leq d, [A_j,A_k^*] = \delta_{j,k}\Id.\] 
To understand the previous equations it is interesting to compare our model of the symmetric Fock space with the Bargmann space. Remember that $\F^+(V^{1,0})$ is the completion of the algebraic sum \(\bigoplus_{k\in \N}\Sym^k(\CC^d)\). The later can be seen as the space of polynomial functions on $V^{0,1} \cong \left(V^{1,0}\right)^*$. The symmetric Fock space then becomes a space of weighted-\(L^2\) holomorphic functions on the space $V^{0,1}$ with the hermitian product defined for $f,g\in \hol(V^{0,1})$ as:
$$\lag f, g \rag := \frac{1}{(2\pi)^d}\int_{V^{0,1}} f(z)\overline{g(z)}\diff z.$$
An orthogonal basis for this space is given by the monomials, once normalized it becomes: 
$$\left( \frac{1}{\sqrt{\alpha_1!\cdots\alpha_d!}}z_1^{\alpha_1}\cdots z_d^{\alpha_d}\right)_{\alpha \in \N^d}.$$
We will use the multi-index notations $\alpha! = \prod_{j = 1}^d \alpha_j!$ and $z^{\alpha} = \prod_{j = 1}^d z_j^{\alpha_j}$ for $\alpha \in \N^d$ and $z \in V^{0,1}$. On this space of holomorphic functions\footnote{Notice that we dont get \emph{all} the entire functions on \(V^{0,1}\). Write an entire function \(f\) as a power series \(f(z) = \sum_{\alpha\in \N^d}\frac{c_{\alpha}}{\alpha!}z^{\alpha}\). Then \(f\) belongs to the Bargmann space if and only if \((c_{\alpha})_{\alpha\in \N^d}\in \ell^2(\N^d)\).}, the respective creation and annihilation operators are $\widetilde{A}_j, \widetilde{A}_j^*, 1 \leq j \leq d$ defined by:
$$\forall 1 \leq j \leq d, \widetilde{A}_j = \ii\frac{\partial}{\partial z_j}, \widetilde{A}^*_j = \ii z_j.$$
Now there is a unitary isomorphism between the symmetric Fock space and the Bargmann space on $V^{1,0}$, namely the one sending $|\alpha\rag$ to $\frac{1}{\sqrt{\alpha!}}z^{\alpha}$ for all $\alpha \in \N^d$. If we denote this unitary isomorphism by $T \colon \F^+(V^{1,0}) \to \hol(V^{0,1})$ (the Bargmann transform) then we have: 
$$\forall \alpha \in \N^d, T(e_1^{\alpha_1}\cdots e_d^{\alpha_d}) = \frac{1}{\sqrt{|\alpha|!}}z_1^{\alpha_1}\cdots z_d^{\alpha_d}, \alpha \in \N^d,$$
and thus
$$\forall 1\leq j \leq d, T\widetilde{A}_jT^* = A_j.$$

\begin{rem}
    The model of the Bargmann space has seen extensive use in complex geometry and geometric quantization. The Berezin-Toeplitz quantization can be performed this way on any compact Kähler manifold, see \cite{LeFloch}.
\end{rem}

We have thus far obtained a representation of the Lie algebra $\heis(V,\omega)$ by (closed) unbounded anti-hermitian operators. It lifts as a representation:
\[\pi \colon \Heis(V,\omega) \to \mathcal{U}(\F^+(V^{1,0})).\]

\begin{prop}$\pi(C^*(\Heis(V,\omega))) = \mathcal{K}(\F^+(V^{1,0}))$.\end{prop}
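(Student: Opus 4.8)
The representation $\pi$ extends to a $*$-representation of $C^*(\Heis(V,\omega))$, and since $C^*(\Heis(V,\omega))$ is the enveloping $C^*$-algebra of $L^1(\Heis(V,\omega))$, its image $\pi(C^*(\Heis(V,\omega)))$ is the operator-norm closure of $\{\pi(f):f\in L^1(\Heis(V,\omega))\}$, where $\pi(f)=\int_{\Heis(V,\omega)}f(g)\pi(g)\,dg$. By the theorem of Stone and von Neumann, $\pi=\pi_1$ is irreducible, so $\pi(C^*(\Heis(V,\omega)))$ is an irreducible $C^*$-subalgebra of $B(\F^+(V^{1,0}))$. I would prove the two inclusions separately.

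For $\mathcal{K}(\F^+(V^{1,0}))\subseteq \pi(C^*(\Heis(V,\omega)))$, recall the standard fact that an irreducible $C^*$-subalgebra of $B(\HH)$ meeting $\mathcal{K}(\HH)$ nontrivially already contains all of $\mathcal{K}(\HH)$; it therefore suffices to exhibit a single nonzero compact operator in the image. Writing elements of $\Heis(V,\omega)$ as $(v,t)\in V\oplus \R Z$, the relation $\pi(0,t)=e^{\ii t}\Id$ gives $\pi(v,t)=e^{\ii t}\pi(v,0)$, so for $f\in \mathcal{S}(\Heis(V,\omega))$ integrating first in the central variable yields
\[\pi(f)=\int_V \sigma_f(v)\,\pi(v,0)\,dv,\]
where $\sigma_f\in\mathcal{S}(V)$ is the partial Fourier transform of $f$ in $t$ evaluated at the central frequency fixed by $\pi(Z)=\ii\Id$. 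As $f$ runs over $\mathcal{S}(\Heis(V,\omega))$, the symbol $\sigma_f$ runs over all of $\mathcal{S}(V)$. Choosing $\sigma_f$ a suitable Gaussian makes the right-hand side a nonzero trace-class operator---for a suitable normalization the vacuum projection $|0\rag\lag 0|$---which is in particular compact, and this gives the inclusion.

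For the reverse inclusion $\pi(C^*(\Heis(V,\omega)))\subseteq \mathcal{K}(\F^+(V^{1,0}))$, I would show that $\pi(f)$ is compact for every $f$ in the dense subalgebra $\mathcal{S}(\Heis(V,\omega))\subseteq L^1(\Heis(V,\omega))$. The operator $\pi(f)=\int_V \sigma_f(v)\pi(v,0)\,dv$ is the Weyl quantization of a Schwartz symbol; equivalently, its matrix coefficients $\lag \pi(f)|\beta\rag,|\alpha\rag\rag$ against the orthonormal basis $(|\alpha\rag)$ are obtained by integrating $\sigma_f$ against the matrix coefficients of the displacement operators $\pi(v,0)$, which are Gaussians times polynomials and hence decay rapidly. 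Consequently $\pi(f)$ is Hilbert--Schmidt, in particular compact. Since $\|\pi(f)\|_{op}\le \|f\|_{L^1}$ and $\mathcal{S}(\Heis(V,\omega))$ is $L^1$-dense, the image $\pi(C^*(\Heis(V,\omega)))$, being the operator-norm closure of $\pi(L^1(\Heis(V,\omega)))$, is contained in the norm-closed set $\mathcal{K}(\F^+(V^{1,0}))$. Combining the two inclusions gives the claim.

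The main obstacle is the compactness of $\pi(f)$: one must justify the reduction to a fixed central frequency, the convergence of the operator-valued integral $\int_V \sigma_f(v)\pi(v,0)\,dv$, and the rapid decay of the displacement matrix coefficients that yields the Hilbert--Schmidt bound. The lower bound, by contrast, is clean once a single compact operator is produced, since irreducibility does the rest.
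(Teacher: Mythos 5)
The paper states this proposition without proof: it is an instance of the standard fact that a connected, simply connected nilpotent Lie group is CCR (liminal), so every irreducible unitary representation maps the group $C^*$-algebra onto the compact operators on its Hilbert space. Your argument is a correct, self-contained proof of this fact for $\Heis(V,\omega)$, and both inclusions are organized in the right way: irreducibility plus a single nonzero compact operator in the image gives $\mathcal{K}(\F^+(V^{1,0}))\subseteq \pi(C^*(\Heis(V,\omega)))$, while compactness of $\pi(f)$ on a dense subalgebra together with $\|\pi(f)\|\le\|f\|_{L^1}$ gives the reverse. The reduction $\pi(f)=\int_V\sigma_f(v)\,\pi(v,0)\,dv$, with $\sigma_f$ the partial Fourier transform of $f$ in the central variable at the frequency determined by $\pi(Z)=\ii\,\Id$, is exactly right. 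The one step you should make fully explicit is the Hilbert--Schmidt bound: rather than estimating the Fock-basis matrix coefficients of the displacement operators (true, but it requires the Laguerre-function asymptotics), it is cleaner to invoke the orthogonality relations for $\pi_1$, i.e.\ its square-integrability modulo the center: the map $\sigma\mapsto\int_V\sigma(v)\pi(v,0)\,dv$ is, up to a normalizing constant, a unitary from $L^2(V)$ onto the Hilbert--Schmidt class, so $\pi(f)$ is Hilbert--Schmidt for every $f$ with $\sigma_f\in L^1\cap L^2$, in particular for $f$ Schwartz. This also renders the lower bound immediate, since surjectivity onto the Hilbert--Schmidt class certainly produces a nonzero compact operator in the image, and your identification of the Gaussian symbol with the vacuum projection $|0\rag\lag 0|$ then becomes an optional (though pleasant) explicit check.
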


Up to this point we have described the unitary dual of the Heisenberg group as a set. We have identified:
$$\widehat{\Heis(V,\omega)} = V^* \sqcup \R^*.$$
Here the $V^*$ component corresponds to the characters and the $\R^*$ component corresponds to the infinite dimensional representations. To describe the group $C^*$-algebra we also need to understand the topology of the spectrum. This can be done using Kirillov theory \cite{Kirillov}. Kirillov shows that, for a simply connected nilpotent Lie group, there is a homeomorphism between the unitary dual (with the hull-kernel/Fell topology) and the space of coadjoint orbits (with the quotient of the euclidean topology):
$$\widehat{G} \cong \faktor{\g}{\Ad^*(G)}.$$
The explicit description of this homeomorphism does not matter for our purpose. We can however compute the space of coadjoint orbits for $\Heis(V,\omega)$. We have $\heis(V,\omega) = V^* \oplus \R Z^*$ and the orbits are given by the hyperplanes $\{Z^* = \lambda\}, \lambda \neq 0$ and the points $\{(\eta,0)\}, \eta \in V^*$. Through Kirillov homeomorphism the hyperplane $\{Z^* = \lambda\}, \lambda \neq 0$ corresponds to the representation infinite dimensional representation $\pi_{\lambda}$ described above and the point $\{(\eta,0)\}, \eta \in V^*$ to the character with same element $\eta$. 

The topology of the unitary dual is now clear from Kirillov's homeomorphism. The subsets $V^*$ and $\R^*$ are endowed with their usual topology and the later is open. Moreover a sequence of points (or a net) in $\R^*$ converging to $0$ for the usual topology converges in $\widehat{\Heis(V,\omega)}$ to every point in the subset $V^*$.
In particular we have an open map $\widehat{\Heis(V,\omega)} \to \R$ sending the whole $V^*$ to $0$. A theorem of Lee \cite{Lee} implies the following structure for the group $C^*$-algebra:

\begin{prop}$C^*(\Heis(V,\omega))$ is a continuous field of $C^*$-algebras over $\R$. The fiber at $\lambda > 0$ is given by $\mathcal{K}(\F^+(V^{1,0}))$, for $\lambda <0$ it is given by $\mathcal{K}(\F^+(V^{0,1}))$ and for $\lambda = 0$ by $\CCC_0(V^*)$.
In particular, by restricting a section to the $0$-fiber, we obtain the exact sequence:
\[\xymatrix@C-1pc{0 \ar[r] & \CCC_0(\R^*_+,\mathcal{K}(\F^+(V^{1,0}))\oplus \mathcal{K}(\F^+(V^{0,1})))  \ar[r] & C^*(\Heis(V,\omega)) \ar[r] & \CCC_0(V^*) \ar[r] & 0}.\]
\end{prop}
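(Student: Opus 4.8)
The plan is to apply the theorem of Lee \cite{Lee}: if the primitive ideal space of a $C^*$-algebra admits a continuous and open surjection onto a locally compact Hausdorff space $X$, then the algebra is the section algebra of a continuous field of $C^*$-algebras over $X$, the fiber at $x$ being the quotient associated to the closed subset lying over $x$. Since $\Heis(V,\omega)$ is nilpotent, hence of type I, its unitary dual $\widehat{\Heis(V,\omega)}$ is identified with $\operatorname{Prim}(C^*(\Heis(V,\omega)))$. The map $p \colon \widehat{\Heis(V,\omega)} \to \R$ recalled above, which is the identity on the open stratum $\R^*$ and collapses $V^*$ to $0$, was seen to be continuous and open. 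Lee's theorem then presents $C^*(\Heis(V,\omega))$ as the continuous field over $\R$ whose fiber $A_\lambda$ is the quotient of $C^*(\Heis(V,\omega))$ corresponding to the closed set $p^{-1}(\lambda)$.

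Next I would identify the fibers. For $\lambda \neq 0$ the preimage $p^{-1}(\lambda) = \{\pi_\lambda\}$ is a single point, so $A_\lambda \cong \pi_\lambda(C^*(\Heis(V,\omega)))$. For $\lambda > 0$ the representation $\pi_\lambda$ acts on $\F^+(V^{1,0})$ through the rescaling $\pi_\lambda(W_j) = \sqrt{\lambda}\,\pi(W_j)$, so the proposition computing $\pi(C^*(\Heis(V,\omega)))$ gives $A_\lambda \cong \mathcal{K}(\F^+(V^{1,0}))$; for $\lambda < 0$ the analogous argument on $\F^+(V^{0,1})$ gives $A_\lambda \cong \mathcal{K}(\F^+(V^{0,1}))$. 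For $\lambda = 0$ we have $p^{-1}(0) = V^*$, consisting of the one-dimensional characters, so $A_0$ is commutative with spectrum $V^*$, and Gelfand duality yields $A_0 \cong \CCC_0(V^*)$.

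The exact sequence then comes from evaluation at $0$: this is a surjection onto $A_0 = \CCC_0(V^*)$ whose kernel is the ideal of sections vanishing at $0$, canonically the section algebra of the restricted field over the open set $\R^* = \R^*_+ \sqcup \R^*_-$, hence the direct sum of the section algebras over $\R^*_+$ and $\R^*_-$. To bring this into the stated trivial form I would use the dilation automorphisms $\delta_t$, $t > 0$, acting on $\heis(V,\omega)$ by $\delta_t|_V = \sqrt{t}\,\Id$ and $\delta_t(Z) = tZ$: these satisfy $\pi_\lambda \circ \delta_t = \pi_{t\lambda}$ and so cover the transitive scaling action of $\R^*_+$ on each half-line, trivializing the field there. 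This identifies the kernel with $\CCC_0(\R^*_+,\mathcal{K}(\F^+(V^{1,0}))) \oplus \CCC_0(\R^*_-,\mathcal{K}(\F^+(V^{0,1})))$, and reparametrizing $\R^*_-$ by $\lambda \mapsto -\lambda$ yields $\CCC_0(\R^*_+,\mathcal{K}(\F^+(V^{1,0})) \oplus \mathcal{K}(\F^+(V^{0,1})))$, as claimed.

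The main obstacle I expect is the passage from the abstract continuous-field structure produced by Lee's theorem to the explicit trivialization over the half-lines: while it is plausible that a continuous field of compact operators over the contractible base $\R^*_+$ is trivial, making this precise \emph{and} compatible with the identification of fibers requires the dilation argument above (or, equivalently, the classification of such fields over a contractible space). Checking that the dilations indeed implement the scaling $\pi_\lambda \circ \delta_t = \pi_{t\lambda}$ continuously and that the resulting trivialization is a genuine $C^*$-isomorphism is the one non-formal point; the remaining steps are standard continuous-field manipulations.
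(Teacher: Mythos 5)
Your proposal is correct and follows exactly the route the paper intends: the paper gives no written proof beyond invoking Lee's theorem together with the Kirillov-theoretic description of $\widehat{\Heis(V,\omega)}$ and the earlier identification $\pi(C^*(\Heis(V,\omega))) = \mathcal{K}(\F^+(V^{1,0}))$, which is precisely your argument. Your explicit trivialization of the field over each half-line via the inhomogeneous dilations is the right way to fill in the one step the paper leaves implicit, and correctly avoids any appeal to Dixmier--Douady classification, which would not apply directly since the field is not a priori locally trivial.
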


\begin{rem} A more algebraic way of seeing the fibers of the $C^*$-algebra can be done using twisted convolution $C^*$-algebras. The $2$-form $\omega$ provides a $2$-cocycle on the abelian group $V$ and we can identify the algebra $\pi_{\lambda}(C^*(V,\omega)), \lambda \neq 0$ with the twisted convolution algebra $C^*(V,\lambda\omega)$. This identification still holds for $\lambda = 0$, on the one hand, we obtain the zero-fiber of $C^*(\Heis(V,\omega))$ seen as a continuous field $C^*$-algebra. On the other hand, the cocycle vanishes and we get the non-twisted convolution algebra which we can identify, since $V$ is abelian, to $\CCC_0(V^*)$. This description also highlights the change of orientation of the symplectic vector space when going from positively indexed representations to the negatively indexed ones.
\end{rem}

On the Heisenberg group there is a natural group of automorphisms : the inhomogeneous dilations. These morphisms are defined at the level of Lie algebra by 
$$\forall \lambda > 0, \forall (v,t) \in \heis(V,\omega) = V \oplus \R, \delta_{\lambda}(v,t) = (\lambda v, \lambda^2 t).$$
These linear maps are compatible with the Lie bracket and can thus be lifted to Lie algebra automorphisms of $\Heis(V,\omega)$ (still denoted by $\delta_{\lambda}, \lambda>0$). We thus obtain by precomposition an action $\R^*_+ \act \widehat{\Heis(V,\omega)}$. Through Kirillov's homeomorphism, this map corresponds to the quotient of the dual action given by the $\tsp\delta_{\lambda}, \lambda >0$ on $\heis(V,\omega)^*$. The action of a $\lambda >0$ is thus the multiplication by $\lambda^2$ on $\R^* \subset \widehat{\Heis(V,\omega)}$ and the multiplication by $\lambda$ on $V^* \subset \widehat{\Heis(V,\omega)}$. In particular the trivial representation is the only fixed point.
If we consider the kernel of the trivial representation $C^*_0(\Heis(V,\omega)) \triangleleft C^*(\Heis(V,\omega))$, we get a new $C^*$-algebra, stable under the action $\delta$ and whose corresponding action on the spectrum is free. One would like to have $C^*_0(\Heis(V,\omega)) = A \rtimes \R$ so that $C^*_0(\Heis(V,\omega))\rtimes_{\delta}\R^* \cong A \rtimes \R \rtimes \R^* \cong A \otimes \mathcal{K}(L^2(\R))$. We propose to construct this algebra $A$ using the Toeplitz algebras.
Notice first that $C^*_0(\Heis(V,\omega))$ sits in the following exact sequence:
\[\xymatrix@C-1.25pc{0 \ar[r] & \CCC_0(\R^*_+,\mathcal{K}(\F^+(V^{1,0}))\oplus \mathcal{K}(\F^+(V^{0,1})))  \ar[r] & C^*_0(\Heis(V,\omega)) \ar[r] & \CCC_0(V^*\setminus \{0\}) \ar[r] & 0}\]

From this exact sequence, we can decompose the $C^*$-algebra $C^*_0(\Heis(V,\omega))$ into two parts $I_{\pm}(V,\omega)$ defined respectively as the sections over $\R_{\pm}$ (they are quotients of \(C^*_0(\Heis(V,\omega))\). The algebra is then isomorphic to the fibered sum:
$$C^*_0(\Heis(V,\omega)) = I_+(V,\omega) \bigoplus_{\CCC_0(V^*\setminus \{0\})} I_-(V,\omega).$$
These ideals are still stable under the $\delta$ action. Moreover, both ideals still sit into similar exact sequences\footnote{The one for $I_-(V,\omega)$ is similar, we just have to replace $V^{1,0}$ with $V^{0,1}$, which corresponds again to a change of orientation (we replace \(\omega\) by \(-\omega\)).}:
$$\xymatrix{0 \ar[r] & \CCC_0(\R^*_+,\mathcal{K}(\F^+(V^{1,0}))) \ar[r] & I_+(V,\omega) \ar[r] & \CCC_0(V^*\setminus \{0\}) \ar[r] & 0}.$$

\section{Isomorphism for d=1}

We first explain the isomorphism for $d = 1$. The construction of the crossed product is exactly the same but to get the isomorphism, we can use some nice arguments that only work for \(d = 1\) so we treat this case separately. Indeed, for \(d = 1\), the Toeplitz algebra $\T := C^*(S) = \T_1$ has the following universal property:

\begin{thm}[\cite{Coburn}] Let $A$ be a $C^*$-algebra and $v \in A$ a partial isometry. Then there exists a unique $*$-homomorphism $\varphi \colon \T \to A$ sending $S$ to $v$.\end{thm}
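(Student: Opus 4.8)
The homomorphism, once it exists, is unique: polynomials in $S$ and $S^*$ are dense in $\T$, so any $*$-homomorphism is determined by its value on $S$ and then on all of $\T$ by continuity. The content of the statement is therefore the \emph{existence} of $\varphi$. Since $S^*S = \Id$ for $d=1$, the generator $S$ is an isometry, and a unital homomorphism forces $v^*v = \varphi(\Id) = 1$, i.e. $v$ to be an isometry; I will accordingly build a unital $*$-homomorphism sending $S$ to the isometry $v$.

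The plan is to represent $A$ faithfully on a Hilbert space $H$ and to analyse $v$ through its Wold decomposition. Writing $N = \ker v^* = \im(\Id - vv^*)$ for the wandering subspace, the subspaces $H_s = \bigoplus_{n\geq 0} v^n N$ and $H_u = \bigcap_{n \geq 0} v^n H$ are both reducing for $v$, satisfy $H = H_s \oplus H_u$, and are such that $v|_{H_u}$ is unitary while $v|_{H_s}$ is unitarily equivalent to the shift $S \otimes \Id_N$ of multiplicity $\dim N$ on $\F^+(V)\otimes N \cong H_s$.

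I would then define $\varphi$ blockwise. On the shift summand, pick the unitary $U \colon \F^+(V)\otimes N \isomto H_s$ implementing Wold's equivalence and set $\varphi_s(a) = U(a\otimes \Id_N)U^*$; this is the amplification of the inclusion $\T \subset \mathcal B(\F^+(V))$ and sends $S$ to $v|_{H_s}$. On the unitary summand, I use Proposition \ref{ToepExactSeq}: for $d=1$ it reads $0 \to \mathcal{K}(\F^+(V)) \to \T \xrightarrow{\sigma} \CCC(\mathbb{S}^{1}) \to 0$, and by Remark \ref{QuotientToeplitz} the symbol $\sigma(S)$ is the coordinate function $z$. Since $w := v|_{H_u}$ is unitary its spectrum lies in $\mathbb{S}^{1}$, so continuous functional calculus gives a unital $*$-homomorphism $\psi \colon \CCC(\mathbb{S}^{1}) \to C^*(w)$, $f \mapsto f(w)$; set $\varphi_u = \psi \circ \sigma$, which sends $S$ to $\psi(z) = w = v|_{H_u}$. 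The desired map is $\varphi = \varphi_s \oplus \varphi_u$: it is a $*$-homomorphism as a direct sum of two, it satisfies $\varphi(S) = v|_{H_s}\oplus v|_{H_u} = v$, and since it is generated by $\varphi(S)=v$ its image is $C^*(v)\subseteq A$, as required.

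The step I expect to be the main obstacle is the assembly of $\varphi$ on the unitary part: one must recognise that an isometry which happens to be unitary can only be reached after collapsing the compact ideal, so the homomorphism is forced to factor through the commutative quotient $\CCC(\mathbb{S}^{1})$. The structural fact making this work is that $\CCC(\mathbb{S}^{1})$ is the universal unital $C^*$-algebra generated by a single unitary, which is exactly what continuous functional calculus provides once $\sigma(\T)=\CCC(\mathbb{S}^1)$ and $\sigma(S)=z$ are known from Proposition \ref{ToepExactSeq} and Remark \ref{QuotientToeplitz}. When $v$ is a non-unitary isometry one has $H_u = 0$ and $\varphi = \varphi_s$ is moreover injective, recovering the isomorphism $\T \cong C^*(v)$.
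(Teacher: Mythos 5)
The paper does not actually prove this statement --- it is quoted from \cite{Coburn} --- so there is no in-paper argument to compare against; what you have written is the standard proof of Coburn's theorem, and it is correct. Uniqueness follows, as you say, from density of the polynomials in $S,S^*$ together with automatic continuity of $*$-homomorphisms; existence follows from the Wold decomposition $H=H_s\oplus H_u$ in a faithful representation, treating the pure-shift summand by amplifying the defining representation of $\T$ on $\F^+(V)\cong\ell^2(\N)$ and the unitary summand by factoring through $\T\to\faktor{\T}{\mathcal K}\cong\CCC(\mathbb S^1)$ (Proposition \ref{ToepExactSeq} and Remark \ref{QuotientToeplitz}) and applying continuous functional calculus to the unitary $v|_{H_u}$. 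Two points deserve emphasis. First, the statement as printed is \emph{false} for a general partial isometry: for $v=\left(\begin{smallmatrix}0&1\\0&0\end{smallmatrix}\right)\in M_2(\CC)$ one has $v^*v\,v=0\neq v$, while $S^*S\,S=S$ in $\T$, so no $*$-homomorphism can send $S$ to $v$. Coburn's theorem requires $v^*v=1$, i.e.\ an isometry; you correctly detected this and proved the corrected statement, which is the one the paper actually needs. Second, for the Wold decomposition to apply you should take the faithful representation of $A$ to be nondegenerate (hence unital, since $v^*v=1$ forces $A$ to be unital), so that $v$ acts on $H$ as a genuine isometry rather than a partial one; with that precision, and the standard facts that the $v^nN$ are mutually orthogonal and that the image of a $*$-homomorphism is the $C^*$-algebra generated by the image of the generator, your argument is complete.
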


We wish to find an action $\R\act \T$ and a $*$-homomorphisms $\T \rtimes \R \to I_{\pm}(\R^2,\omega_{st})$ in a way that preserves the exact sequences and show that these morphisms are isomorphisms.
%We first define the action. Recall that we can see $\T$ as a subalgebra bounded operators on $\hol(\CC)$ or $\F^+(\CC) = \ell^2(\N)$. On this space of holomorphic function there is a quantum harmonic oscillator. This is an operator that is unbounded self-adjoint and can be expressed in terms of the creation and annihilation operators:
%$$H = AA^* + \frac{1}{2}\Id.$$
%In particular on $\ell^2(\N)$, the creation and annihilation operators are defined as $\forall n\in \N, Ae_n = \ii \sqrt{n}e_{n-1}, A^*e_n = \ii\sqrt{n+1}e_{n+1}$ (with the convention that $e_{-1} = 0$). Therefore the quantum harmonic oscillator becomes:
%$$\forall n\in N, He_n = \left(n+\frac{1}{2}\right)e_n.$$
%The quantum harmonic oscillator is positive and self-adjoint. We can use functional calculus to define its complex powers $(H^{\ii t})_{t\in \R}$. 
%Let $\alpha \colon \R \to \Aut(\T)$ be the family of automorphisms such that $\alpha_t$ sends $S$ to $S_t = \Ad(H^{\ii t})$. It is not clear yet that $\alpha_t$ are indeed automorphisms of $\T$. Let us show each $\alpha_t$ acts trivially on the ideal and the quotient. We begin with the following lemma:

We first define the action. We have \(\mathcal{F}^+(\CC) = \ell^2(\N)\). Consider the number operator \(N \colon \ell^2(\N) \to \ell^2(\N)\) by \( N(e_n)= n e_n, n \in \N\). The operator \(N+1\) is self adjoint and positive, we can consider its complex powers \((N+1)^{\ii t}, t\in \R\) using functional calculus.
Let $\alpha \colon \R \to \Aut(\T)$ be the family of automorphisms such that $\alpha_t$ sends $S$ to $S_t = \Ad((N+1)^{\ii t/2})(S)$. It is not clear yet that $\alpha_t$ are indeed automorphisms of $\T$. Let us show each $\alpha_t$ acts trivially on the ideal and the quotient. We begin with the following lemma:

\begin{lem}\label{WellDefd1}$\forall t \in \R, S_t \equiv S \mod \mathcal{K}$ \end{lem}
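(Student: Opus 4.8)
The plan is to reduce everything to an explicit computation on the orthonormal basis $(e_n)_{n\in\N}$. For $d=1$ the Proposition describing the shifts gives $S e_n = e_{n+1}$, so $S$ is just the unilateral shift on $\ell^2(\N)$, and since $N e_n = n e_n$ the unitary $(N+1)^{\ii t/2}$ obtained by functional calculus acts diagonally by $(N+1)^{\ii t/2}e_n = (n+1)^{\ii t/2}e_n$, with $(n+1)^{\ii t/2}=e^{(\ii t/2)\log(n+1)}$ a scalar of modulus one. First I would conjugate $S$ directly: applying $(N+1)^{-\ii t/2}$, then $S$, then $(N+1)^{\ii t/2}$ to $e_n$ yields
\[ S_t\, e_n = \left(\frac{n+2}{n+1}\right)^{\ii t/2} e_{n+1}. \]

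I would then subtract $S$ to obtain
\[ (S_t - S)\,e_n = c_n\, e_{n+1}, \qquad c_n := \left(\frac{n+2}{n+1}\right)^{\ii t/2} - 1, \]
so that $S_t - S$ is a weighted shift with weight sequence $(c_n)_{n\in\N}$. The essential observation is that $\tfrac{n+2}{n+1}\to 1$, hence $c_n \to 0$.

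Finally I would invoke the elementary fact that a weighted shift with a null weight sequence is compact: truncating to the first $N$ weights gives a finite rank operator $K_N$ with $\|(S_t-S)-K_N\| = \sup_{n>N}|c_n| \to 0$, so $S_t - S$ is a norm limit of finite rank operators and thus compact, i.e. $S_t \equiv S \mod \mathcal{K}$. There is no real obstacle here; the only points deserving care are the bookkeeping in the conjugation computation and checking that $(c_n)$ is genuinely a null sequence --- this is where the strict positivity of $N+1$, which keeps all complex powers well defined and unitary, is used. As an immediate payoff, since $\mathcal{K}\subset \T$ this shows $S_t \in \T$ and hence $\alpha_t(\T)=C^*(S_t)\subseteq \T$, which (together with the same argument applied to $\alpha_{-t}$) confirms that the $\alpha_t$ really are automorphisms of $\T$.
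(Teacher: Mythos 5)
Your proof is correct and follows essentially the same route as the paper's: both identify $S_t-S$ as a weighted shift with weights $\bigl(\tfrac{n+2}{n+1}\bigr)^{\ii t/2}-1$ tending to $0$, and conclude compactness by norm-approximation with finite-rank truncations. The paper merely makes the decay quantitative (an $\mathcal{O}(1/k)$ estimate, uniform for $t$ in compacta), which it later reuses in the higher-dimensional case.
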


The lemma implies that the action is well defined and that it is trivial on the quotient.
\begin{proof}
Notice first that the action $t\mapsto\Ad((N+1)^{\ii t/2})$ automatically preserves the ideal of compact operators. Let us compute what happens with the action on finite rank operators.
For $n,m \in \N$, let $P_{n,m}$ be the rank one operator sending $e_n$ to $e_m$. The operator $\alpha_t(P_{n,m})$ is still of rank one and sends $\CC  e_n$ to $\CC e_m$. More precisely:
\begin{align*}
\alpha_t(P_{n,m})(e_n) &= S_t^mP_{0,0}S_t^{*n}e_n \\
					   &= (N+1)^{\ii t/2} S^m P_{0,0} S^{*n} (N+1)^{-\ii t/2} e_n \\
					   &=\left(\frac{m+1}{n+1}\right)^{\ii t/2} e_m.
\end{align*}
We now prove the lemma.
Define $\Psi_t = S_t - S$ and it's finite rank truncations:
$$\Psi_t^{(n)} \colon e_k \mapsto \left\lbrace \begin{matrix}
\Psi_t(e_k) \text{ if } k \leq n \\
0 \text{ otherwise.}
\end{matrix}\right.$$
The $\Psi_t^{(n)}$ are of course compact operators so we now show that:
$$\lim_{n\to +\infty}\Psi_t^{(n)} = \Psi_t$$
in the operator norm topology. Let $\Phi_t^{(n)} =  \Psi_t - \Psi_t^{(n)} $ we have:
$$\|\Phi_t^{(n)}\|^2 = \|\Phi_t^{(n)*}\Phi_t^{(n)}\| = \rho(\Phi_t^{(n)*}\Phi_t^{(n)}),$$
where $\rho$ is the spectral radius. We can compute the eigenvalues explicitly:
$$\Phi_t^{(n)*}\Phi_t^{(n)}(e_k) = \left\lbrace \begin{matrix}
\left|\left(\frac{k + 2}{k + 1} \right)^{\ii t/2} - 1 \right|^2 e_k \text{ if } k > n \\
0 \text{ otherwise.}
\end{matrix}\right.$$
Therefore we have 
$$\|\Psi_t^{(n)} - \Psi_t\| = \sup_{k \geq n} \left|\left(\frac{k+2}{k + 1} \right)^{\ii t/2} - 1 \right|.$$
We then get the approximation:
\begin{align*}
\left|\left(\frac{k + 2}{k + 1} \right)^{\ii t/2} - 1 \right| &= \left|\exp\left(\frac{\ii t}{2} \left[\ln\left(1 + \frac{2}{k}\right) - \ln\left(1 + \frac{1}{k}\right) \right] \right) - 1 \right| \\
  &= \left|\exp\left( \frac{\ii t}{2k} + o\left(\frac{1}{k}\right)\right) - 1\right| \\
  &= \mathcal{O}\left(\frac{1}{k}\right).
\end{align*}
Here the $o$ and $\mathcal{O}$ can be taken uniformly on $t$ in any compact subset of $\R$. Hence we get the convergence:
$$\lim_{n\to +\infty}\Psi_t^{\left(n\right)} = \Psi_t,$$
in the norm operator topology (uniformly for $t$ in any compact subset of $\R$). Thus the operators $\Psi_t$ are compact and we have proven the lemma.\end{proof}

\begin{cor}
The $\R$ action on $\T$ is well defined and trivial on the quotient $\CCC(\mathbb{S}^1)$.
\end{cor}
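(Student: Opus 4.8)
The plan is to read off both claims from Lemma \ref{WellDefd1}. The map $\Ad((N+1)^{\ii t/2})$ is a $*$-automorphism of the full algebra $\mathcal{B}(\ell^2(\N))$, being conjugation by the unitary $(N+1)^{\ii t/2}$. First I would check that it preserves $\T$. By the lemma $S_t - S \in \mathcal{K}$, and since $\mathcal{K}\subset\T$ with $S\in\T$, we get $S_t = S + (S_t - S)\in\T$. As $\T = C^*(S)$ and $\alpha_t$ is a $*$-homomorphism of $\mathcal{B}(\ell^2(\N))$, its restriction sends $\T$ into the $C^*$-algebra generated by $\alpha_t(S)=S_t$, which lies in $\T$; hence $\alpha_t(\T)\subseteq\T$.

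To upgrade this inclusion to an automorphism I would exploit the one-parameter group structure. Since $t\mapsto (N+1)^{\ii t/2}$ is a unitary one-parameter group, the identity $(N+1)^{\ii(s+t)/2}=(N+1)^{\ii s/2}(N+1)^{\ii t/2}$ gives $\alpha_{s+t}=\alpha_s\alpha_t$, so in particular $\alpha_{-t}=\alpha_t^{-1}$ on $\mathcal{B}(\ell^2(\N))$. Running the inclusion argument for $-t$ yields $\alpha_{-t}(\T)\subseteq\T$, and applying $\alpha_t$ gives $\T=\alpha_t(\alpha_{-t}(\T))\subseteq\alpha_t(\T)$, whence $\alpha_t(\T)=\T$. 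Thus each $\alpha_t$ is an automorphism of $\T$ and $\alpha\colon\R\to\Aut(\T)$ is a homomorphism; point-norm continuity comes from the convergence in the lemma being uniform for $t$ in compact sets.

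For triviality on the quotient, conjugation by a unitary preserves $\mathcal{K}$, so $\alpha_t$ descends to an automorphism $\bar\alpha_t$ of $\T/\mathcal{K}\cong\CCC(\mathbb{S}^1)$. The lemma gives $\overline{S_t}=\overline{S}$ in this quotient, so $\bar\alpha_t$ fixes the generator $\overline{S}$; being a $*$-homomorphism that fixes a generating element, it must equal the identity.

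I expect no genuine obstacle: all the analytic content sits in Lemma \ref{WellDefd1}, whose uniform $\mathcal{O}(1/k)$ estimate is already in hand, and what remains is the formal transport of the congruence $S_t\equiv S$ through $C^*$-generation and the quotient map. The only point deserving care is the surjectivity of $\alpha_t$, which is why one appeals to the group law $\alpha_{-t}=\alpha_t^{-1}$ rather than constructing preimages by hand.
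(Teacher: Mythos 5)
Your argument is correct and follows the same route the paper intends: the paper simply asserts, immediately after Lemma \ref{WellDefd1}, that ``the lemma implies that the action is well defined and that it is trivial on the quotient,'' and your write-up is precisely the natural filling-in of that assertion (inclusion of $\alpha_t(\T)$ in $\T$ via $S_t\in\T$, surjectivity via the group law, and triviality on the quotient because $\overline{S_t}=\overline{S}$ generates $\CCC(\mathbb{S}^1)$). No gaps.
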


\begin{lem}
The crossed product $\mathcal{K}\rtimes_{\alpha} \R$ is trivial i.e. isomorphic to the tensor product $\mathcal{K}\otimes \CCC_0(\R^*_+)$.
\end{lem}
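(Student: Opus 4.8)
The plan is to recognize that $\alpha$ is an \emph{inner} action. By construction $\alpha_t = \Ad(U_t)$ where $U_t := (N+1)^{\ii t/2}$, and by Stone's theorem applied to the positive self-adjoint generator $\tfrac12\log(N+1)$, the family $(U_t)_{t\in\R}$ is a strongly continuous one-parameter group of unitaries, i.e. a genuine group homomorphism $\R \to \mathcal{U}(B(\ell^2(\N))) = \mathcal{U}(M(\mathcal{K}))$ with $U_sU_t = U_{s+t}$ and $U_0 = \Id$. The conceptual reason the crossed product trivializes is that $(U_t)$ is a one-cocycle making $\alpha$ exterior equivalent to the trivial action, so that $\mathcal{K}\rtimes_\alpha\R \cong \mathcal{K}\otimes C^*(\R)$; I would carry this out by hand, since only basic functional analysis is needed.

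Concretely, I would untwist the convolution. On the dense $*$-subalgebra $C_c(\R,\mathcal{K}) \subset \mathcal{K}\rtimes_\alpha \R$ the product and involution are
\[(f*g)(t) = \int_\R f(s)\,\alpha_s(g(t-s))\diff s, \qquad f^*(t) = \alpha_t(f(-t)^*).\]
Define $\Phi \colon C_c(\R,\mathcal{K}) \to C_c(\R,\mathcal{K})$ by $\Phi(f)(t) = f(t)U_t$. Using $\alpha_s = \Ad(U_s)$ together with the group law $U_s^*U_t = U_{t-s}$, a direct computation gives $\Phi(f*g) = \Phi(f)\bullet\Phi(g)$ and $\Phi(f^*) = \Phi(f)^\sharp$, where $\bullet$ and $\sharp$ denote the \emph{untwisted} convolution and involution of $C_c(\R,\mathcal{K}) \subset \mathcal{K}\otimes C^*(\R)$. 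Since $\Phi$ is a linear bijection with inverse $F\mapsto(t\mapsto F(t)U_t^*)$, it is a $*$-isomorphism of these dense $*$-subalgebras; as $\R$ is amenable (full equals reduced on both sides), $\Phi$ extends to a $C^*$-isomorphism $\mathcal{K}\rtimes_\alpha\R \isomto \mathcal{K}\otimes C^*(\R)$.

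Finally, the Fourier transform gives $C^*(\R) \cong \CCC_0(\widehat{\R}) \cong \CCC_0(\R)$, and composing with the diffeomorphism $\R \isomto \R^*_+$ identifies this with $\CCC_0(\R^*_+)$, yielding $\mathcal{K}\rtimes_\alpha\R \cong \mathcal{K}\otimes\CCC_0(\R^*_+)$ as claimed. The choice of $\R^*_+$ rather than $\R$ is only a relabelling, made so as to match the continuous-field parameter $\lambda$ on the Heisenberg side.

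The step requiring the most care is verifying that $\Phi$ genuinely lands in $C_c(\R,\mathcal{K})$, which amounts to norm-continuity of $t\mapsto f(t)U_t$. This follows because $f$ is norm-continuous with compact support while $t\mapsto U_t$ is strongly continuous, together with the standard fact that strong convergence of a uniformly bounded family of operators becomes norm convergence after multiplication by a fixed compact operator (here $f(t)\in\mathcal{K}$, applied to the adjoints). Once this is in place the two $C^*$-norms are automatically matched through $\Phi$, so no separate estimate is needed; this is the only genuinely analytic point, the algebraic identities for $*$, $\bullet$, $\sharp$ being purely formal consequences of the group law for $(U_t)$.
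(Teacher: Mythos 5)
Your proof is correct and follows essentially the same route as the paper: both untwist the inner action by multiplying by the unitary multipliers $U_t=(N+1)^{\ii t/2}$ of $\mathcal{K}$, identifying the twisted convolution algebra with the untwisted one and then applying Fourier transform (the paper writes the inverse of your map $\Phi$, sending $f\otimes T$ to $s\mapsto f(s)T(N+1)^{-\ii s/2}$). Your added care about norm-continuity of $t\mapsto f(t)U_t$ and the full-versus-reduced issue is a welcome elaboration of points the paper leaves implicit.
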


\begin{proof}
We have $\alpha_t = Ad((N+1)^{\ii t/2})$ and the $(N+1)^{\ii t/2}$ are multipliers of $\mathcal{K}$ so we can define the map :
$$\begin{matrix}
\mathcal{K}\otimes_{alg} \CCC_c(\R) & \to & \CCC_c(\R,\mathcal{K}) \\
f \otimes T & \mapsto & (s \mapsto f(s)T(N+1)^{-\ii s/2})
\end{matrix}$$
which is an algebra morphism (the later is considered with the product twisted by $\alpha$) extends to an isomorphism between $\mathcal{K}\otimes \CCC_0(\R^*_+)$ and $\mathcal{K}\rtimes_{\alpha} \R$
\end{proof}

\begin{rem}We used the fact that $\R$ is amenable hence we did not make any difference between the maximal and reduced crossed products by $\R$ and tensor products by $C^*(\R)$. We also used the Pontryagin duality: 
$$C^*(\R) \cong \CCC_0(\R^*_+)^.$$
\end{rem}

\begin{cor} Let \(A := \T \bigoplus_{\CCC(\mathbb{S}^1)}\T\). We have the exact sequence
\begin{equation} \label{DoubleCrossedToeplitz} \xymatrix{0 \ar[r] & \CCC_0(\R^*_+,\mathcal{K}\oplus\mathcal{K})  \ar[r] & A \rtimes_{\alpha} \R \ar[r] & \CCC_0(\R^2\setminus 0) \ar[r] & 0.}
\end{equation}
\end{cor}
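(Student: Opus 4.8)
The plan is to deduce the sequence by applying the crossed-product functor $-\rtimes_\alpha\R$ to an exact sequence for the fibered sum $A$ itself, and then to identify the two outer terms using the single-copy results already established for $\T$.

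First I would record the defining exact sequence of the fibered sum. Writing $\pi\colon\T\to\CCC(\mathbb{S}^1)$ for the quotient map of Proposition \ref{ToepExactSeq} in the case $d=1$, we have
\[A=\{(a,b)\in\T\oplus\T : \pi(a)=\pi(b)\},\]
and the map $(a,b)\mapsto\pi(a)$ is surjective onto $\CCC(\mathbb{S}^1)$ with kernel exactly $\mathcal{K}\oplus\mathcal{K}$. This produces
\[\xymatrix{0\ar[r] & \mathcal{K}\oplus\mathcal{K}\ar[r] & A\ar[r] & \CCC(\mathbb{S}^1)\ar[r] & 0.}\]
Because $\alpha$ preserves the ideal $\mathcal{K}\subset\T$ and acts trivially on the quotient (Lemma \ref{WellDefd1} and its corollary), the restriction to $A$ of the product action $\alpha\oplus\alpha$ is well defined, and all three terms of this sequence are $\alpha$-equivariant with equivariant maps.

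Next I would apply the functor $-\rtimes_\alpha\R$. Since $\R$ is amenable, it is exact and the full and reduced crossed products coincide; in particular $-\rtimes_\alpha\R$ carries the above short exact sequence to a short exact sequence
\[\xymatrix{0\ar[r] & (\mathcal{K}\oplus\mathcal{K})\rtimes_\alpha\R\ar[r] & A\rtimes_\alpha\R\ar[r] & \CCC(\mathbb{S}^1)\rtimes_\alpha\R\ar[r] & 0.}\]
It then remains to identify the two ends. For the ideal, the crossed product commutes with the finite direct sum and the action on each summand is $\Ad((N+1)^{\ii t/2})$, so the preceding lemma gives $\mathcal{K}\rtimes_\alpha\R\cong\CCC_0(\R^*_+,\mathcal{K})$ and hence $(\mathcal{K}\oplus\mathcal{K})\rtimes_\alpha\R\cong\CCC_0(\R^*_+,\mathcal{K}\oplus\mathcal{K})$. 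For the quotient, the action is trivial, so $\CCC(\mathbb{S}^1)\rtimes_\alpha\R\cong\CCC(\mathbb{S}^1)\otimes C^*(\R)$, and Pontryagin duality $C^*(\R)\cong\CCC_0(\R)$ together with the polar-coordinate homeomorphism $\R^2\setminus 0\cong\mathbb{S}^1\times\R$ identifies this with $\CCC_0(\R^2\setminus 0)$. Substituting these into the displayed sequence yields exactly \eqref{DoubleCrossedToeplitz}.

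Once the single-copy lemmas are in hand the argument is essentially bookkeeping, and the one conceptual input that genuinely needs care is the exactness of the crossed-product functor: I would justify it through the amenability, hence exactness, of $\R$, so that injectivity of the ideal inclusion is preserved after passing to crossed products. I would also verify that the two terminal identifications are compatible with the connecting maps of the sequence, but since each identification is natural in the underlying equivariant $C^*$-algebra this compatibility is automatic.
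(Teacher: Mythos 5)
Your proposal is correct and follows essentially the same route as the paper: exactness of the crossed product by the amenable group $\R$ applied to the defining sequence of the fibered sum, identification of the ideal via the lemma on $\mathcal{K}\rtimes_\alpha\R$, and identification of the quotient via triviality of the action together with Pontryagin duality (the paper writes $C^*(\R)\cong\CCC_0(\R^*_+)$, matching your $\CCC_0(\R)$ up to the exponential homeomorphism). The paper's proof is just a terser version of the same bookkeeping.
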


\begin{proof}We use the exactness of the maximal crossed product (we can also use the reduced tensor product which is exact since $\R$ is amenable). We have already identified the ideal in the previous lemma. The action on the quotient being trivial we have 
$$\CCC(\mathbb{S}^1)\rtimes_{\alpha}\R \cong \CCC(\mathbb{S}^1)\otimes \CCC_0(\R^*_+) \cong \CCC_0(\mathbb{S}^1 \times \R^*_+)\cong \CCC_0(\R^2\setminus 0).$$
Therefore the exact sequence \eqref{DoubleCrossedToeplitz} derives from \ref{ToepExactSeq}.
\end{proof}

\begin{rem}Looking at $\alpha$ acting on $\toep$ we also get the exact sequence:
\begin{equation}\label{CrossedToeplitz}
\xymatrix{0 \ar[r] & \CCC_0(\R^*_+,\mathcal{K})  \ar[r] & \toep \rtimes_{\alpha} \R \ar[r] & \CCC_0(\R^2\setminus 0) \ar[r] & 0.}
\end{equation}
We still have $A \rtimes \R \cong (\toep \rtimes_\alpha \R) \oplus_{\CCC_0(\R^2\setminus 0)} (\toep \rtimes_\alpha \R)$.
\end{rem}

We now want to relate this exact sequence to the respective ones of $I_{\pm} = I_{\pm}(\R^2,\omega_{st})$. We define maps $\T \rtimes \R \to I_+$, show that they preserve the respective exact sequences and are isomorphisms.
For that we need two morphisms: $\vphi_{\pm} \colon \T \to M(I_\pm)$ and $\beta \colon \R \to M_u(I_\pm)$ ($M(\cdot)$ denotes the multiplier algebra and $M_u(\cdot)$ the unitary elements of this algebra). Those morphisms have to satisfy the relation: 
\begin{equation}\label{Equiv}
\forall t\in \R, \vphi \circ \alpha_t = \Ad(\beta(t)) \circ \vphi.
\end{equation}

We only give the idea behind such maps. Proving that they are well defined will actually require the same tools as those developed for the general case in the Sections \ref{IsomorphismSection} and \ref{Symbols technicalities}. 

Let \(W = \frac{1}{\ii\sqrt{2}} (X+\ii Y) \in \mathfrak{h}_3\otimes \CC\). We can consider it as a differential operator on the Heisenberg group, i.e. a map on \(\CCC^{\infty}_c(H_3)\). We still call \(W\) the restriction of \(W\) to an unbounded operator on \(I_+\).
Consider the polar decomposition \(W := u|W|\). We claim that \(u \in M(I_+)\) and that it is an isometry. In particular using Coburn's theorem we get the map \(\varphi_+\).

The intuition behind this choice can be understood in the irreducible representations of \(I_+\). Remember that \(I_+\) is a continuous field of \(C^*\)-algebras over \(\R_+\). Let \(\pi\) be the representation corresponding to the fiber over \(1\) (we get all the other positive ones using the dilations). Then \(\pi(W) = A^*\) is the creation operator. In particular its polar decomposition is \(A^* = S(N+1)^{-1/2}\), therefore the mapping \(u\mapsto S\) at least makes sense in each infinite dimensional irreducible representation. The remaining thing to show is that this has a continuous limit when the parameter of the representation tends to \(0\). This is a particular instance of Theorem \ref{Toeplitz as Heisenberg multipliers} whose proof will require the tools of Section \ref{Symbols technicalities}.

The construction of the \(\R\)-group of unitary multipliers is similar. We use the complex powers of the operator \(W^*W\). In all the irreducible representations of \(I_+\) this corresponds to \(N+1\). We claim that \(W^*W\) is strictly positive as an unbounded multiplier of \(I_+\) so that these complex powers make sense.

We can do the same thing on \(I_-\) by replacing \(W\) by \(\bar{W}\). The desired relations \ref{Equiv} are then easily satisfied.

\begin{rem}\label{Remarque:H ou N+1}
    We did not have to necessarily use conjugation by \(N+1\) to define the \(\R\) action. We could have for instance replaced \(N+1\) by the quantum Harmonic oscillator \(H = N+\frac{1}{2}\) which can also be realized as a differential operator on the Heisenberg group (which we then use to define the family of unitary multipliers). Which operators can be chosen will be more apparent in Section \ref{Symbols technicalities}.
\end{rem}

\section{Isomorphism for \texorpdfstring{$d$}{d} arbitrary}\label{IsomorphismSection}

\subsection{The \texorpdfstring{$\R$}{R}-action on the Toeplitz algebra}

We now consider an arbitrary symplectic vector space $(V,\omega)$ of dimension $2d$ for which we fix a compatible complex structure $J$. We want to define an isomorphism:
$$\left(\T(V^{1,0})\bigoplus_{\CCC(\mathbb{S}^*V)} \T(V^{0,1})\right)\rtimes \R \isomto C^*_0(\Heis(V,\omega)).$$
Here we understand the identifications $\CCC(\mathbb{S}^*V) \cong \CCC(\mathbb{S}^*V^{1,0})$ through $V \cong V^{1,0}$ with $V^{1,0} = \ker(J-\ii) = \{X - \ii JX, X \in V\}$. We get a similar identification for $V^{0,1}$. To get the desired isomorphism we can reduce ourselves to show an isomorphism:
$$\T(V^{1,0}) \rtimes \R \isomto I_+(V,\omega).$$

\begin{rem}[On the choice of compatible complex structure]Let $J,J'$ be two compatible complex structures on $(V,\omega)$. We can find an element $\varphi \in \Sp(V,\omega)$ such that $\varphi J\varphi^{-1}= J'$. On the complexified spaces, $\varphi$ preserves the eigenvalue decompositions and becomes a unitary operator between them, i.e. $\varphi = \varphi_{1,0}+\varphi_{0,1}$ with $\varphi_{1,0} \colon V^{1,0;J} \isomto V^{1,0;J'}$ and $\varphi_{0,1} \colon V^{0,1;J} \isomto V^{0,1;J'}$ both unitary. Lifting both these operators to the symmetric Fock spaces we get unitary isomorphisms $\Ad(\tilde{\varphi}_{1,0}) \colon \T(V^{1,0;J}) \isomto \T(V^{1,0;J'})$ and $\Ad(\tilde{\varphi}_{0,1}) \colon \T(V^{0,1;J}) \isomto \T(V^{0,1;J'})$. Moreover if we take another $\phi \in \Sp(V,\omega)$ intertwining both complex structures then $\varphi^{-1}\phi$ preserves $J$ and the same goes for $\varphi\phi^{-1}$ and $J'$. We then have the commutative diagram:
$$\xymatrix{\T(V^{1,0;J}) \ar[r]^{\Ad(\tilde{\varphi}_{1,0})} \ar[d]_{\Ad(\widetilde{(\phi^{-1}\varphi)}_{1,0})} & \T(V^{1,0;J'}) \ar[d]^{\Id} \\
\T(V^{1,0;J})\ar[r]^{\Ad(\tilde{\phi}_{1,0})} & \T(V^{1,0;J'}).}$$
And a similar diagram for the $(0,1)$-components. Therefore the algebras $\T(V^{1,0})$ and $\T(V^{0,1})$ are unique up to unique isomorphism and depend only on the symplectic structure.\end{rem}

\begin{rem}[Opposite structures] The space \(V^{0,1}\) is obtained from \(V^{1,0}\) being its complex conjugate. At the level of Toeplitz algebras this gives an isomorphism: \(\T(V^{1,0}) \cong \T(V^{0,1})^{op}\).

Now, \(V^{0,1}\) can also be seen as the \(+\ii\) eigenspace for the complex structure \(-J\). This new complex structure is not compatible with \(\omega\) but with \(-\omega\). Therefore, if we replace $\omega$ by $-\omega$ we swap the roles of $V^{1,0}$ and $V^{0,1}$ and thus switch the algebras $\T(V^{1,0})$ and $\T(V^{0,1})$. 

At the level of the Heisenberg group we have \(\Heis(V,-\omega) \cong \Heis(V,\omega)^{op}\) (and similarly for the corresponding \(C^*\)-algebras). Combining this with the isomorphism between the Toeplitz algebras with opposite complex structure we see that our isomorphism behaves well when taking the opposite symplectic structure.
\end{rem}

Before defining our isomorphism, let us first define the $\R$-action on the Toeplitz algebra. Let $W$ be a complex vector space. We once again consider the number operator \(N\) and we add 1 to get the injective \(N+1\). It is an unbounded self-adjoint operator. On the component $\Sym^k(W) \subset \F^+(W)$ it restricts to $(k+1)\Id$. In particular we can define the operator $(N+1)^{\ii t/2}$ for $t\in \R$ using functional calculus. We have $(N+1)^{\ii t/2}\in \mathcal{B}(\F^+(W)), t\in \R$ (and continuity in $t$ for the strong operator topology). Since $\mathcal{K}(\F^+(W))$ is a 2-sided ideal it is preserved by each $\Ad((N+1)^{\ii t/2})$. Now we want to prove that it preserves the whole Toeplitz algebra.

\begin{lem}For $w \in W$ we have $\Ad((N+1)^{\ii t/2})(S_w) \equiv S_w \mod \mathcal{K}(\F^+(W))$. In particular $t\mapsto\Ad((N+1)^{\ii t/2})$ defines an action $\R \act \T(W)$.\end{lem}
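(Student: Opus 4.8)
The plan is to follow verbatim the strategy of Lemma \ref{WellDefd1}, the only new input being a structural observation that collapses the $d$-dimensional computation onto the one-dimensional one. Write $\alpha_t := \Ad((N+1)^{\ii t/2})$. The operator $N+1$ acts as the scalar $k+1$ on the entire homogeneous component $\Sym^k(W)$, so $(N+1)^{\pm\ii t/2}$ acts there as $(k+1)^{\pm\ii t/2}$. Since $S_w = \Sym(w\otimes\cdot)$ raises the symmetric degree by exactly one, i.e. $S_w(\Sym^k(W)) \subseteq \Sym^{k+1}(W)$, a direct computation on any $\xi \in \Sym^k(W)$ gives
\[\alpha_t(S_w)\xi = (N+1)^{\ii t/2}S_w(k+1)^{-\ii t/2}\xi = \left(\frac{k+2}{k+1}\right)^{\ii t/2} S_w\xi,\]
the scalar being independent of $w$ and of $d$. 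Consequently $\Psi_t := \alpha_t(S_w) - S_w$ acts on $\Sym^k(W)$ as multiplication of $S_w$ by $\left(\frac{k+2}{k+1}\right)^{\ii t/2}-1$.

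Next I would prove $\Psi_t$ is compact exactly as before. Truncating $\Psi_t$ to the finite-dimensional subspace $\bigoplus_{k\leq n}\Sym^k(W)$ yields a finite-rank operator $\Psi_t^{(n)}$; since the homogeneous components are mutually orthogonal and $S_w$ is bounded with $\|S_w\|\leq \|w\|$, the tail satisfies
\[\|\Psi_t - \Psi_t^{(n)}\| \leq \|w\|\,\sup_{k > n}\left|\left(\frac{k+2}{k+1}\right)^{\ii t/2}-1\right|.\]
The supremum tends to $0$ as $n \to \infty$, uniformly for $t$ in compact subsets of $\R$, by the same Taylor estimate as in Lemma \ref{WellDefd1}, namely $\left|\left(\frac{k+2}{k+1}\right)^{\ii t/2}-1\right| = \mathcal{O}(1/k)$. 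Hence $\Psi_t$ is a norm limit of finite-rank operators, therefore compact, which proves $\alpha_t(S_w) \equiv S_w \bmod \mathcal{K}(\F^+(W))$.

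Finally I would deduce the action. As the imaginary power of a positive self-adjoint operator, $(N+1)^{\ii t/2}$ is unitary, so $\alpha_t$ is a $*$-automorphism of $\mathcal{B}(\F^+(W))$. By the previous step $\alpha_t(S_w) = S_w + \Psi_t$ with $S_w \in \T(W)$ and $\Psi_t \in \mathcal{K}(\F^+(W)) \subseteq \T(W)$ (using the inclusion established earlier), so $\alpha_t$ sends the generators of $\T(W)$ into $\T(W)$ and hence maps $\T(W)$ into itself; applying the same to $\alpha_{-t} = \alpha_t^{-1}$ together with $\alpha_t\alpha_{-t} = \Id$ gives $\alpha_t(\T(W)) = \T(W)$, so each $\alpha_t$ restricts to an automorphism of $\T(W)$ and $\alpha_{s+t} = \alpha_s\alpha_t$. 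I expect the only genuinely technical point to be the point-norm continuity of $t \mapsto \alpha_t(a)$: on compacts it follows by approximating with finite-rank operators, while on a generator it follows from the block formula above, since $\|\alpha_t(S_w) - \alpha_{t_0}(S_w)\| \leq \|w\|\sup_k\left|\left(\frac{k+2}{k+1}\right)^{\ii t/2} - \left(\frac{k+2}{k+1}\right)^{\ii t_0/2}\right|$, whose summand is small for large $k$ (both factors tend to $1$) and continuous in $t$ for each fixed $k$. The compactness estimate is the heart of the matter, but thanks to the degree-shifting structure of $S_w$ it is identical to the one already carried out for $d=1$.
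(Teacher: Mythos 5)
Your proof is correct and follows essentially the same route as the paper: truncate $\Psi_t = \alpha_t(S_w)-S_w$ along the homogeneous components and control the tail by the estimate $\sup_{k>n}\bigl|\bigl(\tfrac{k+2}{k+1}\bigr)^{\ii t/2}-1\bigr|=\mathcal{O}(1/n)$, exactly as in the $d=1$ case. Your block-scalar observation (that $\alpha_t(S_w)$ equals $S_w$ times a scalar on each $\Sym^k(W)$) lets you treat a general $w$ directly, whereas the paper reduces to the basis shifts $S_j$ and computes $\Psi_t^*\Psi_t$ explicitly, but this is a cosmetic streamlining rather than a different argument.
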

\begin{proof}
The proof is similar to Lemma \ref{WellDefd1}. It is enough to prove the result for $S_1,\cdots , S_d$ given an orthonormal basis $e_1,\cdots,e_d$ of $W$. Let us fix $1\leq j \leq d$ and set $\Psi_t = \Ad((N+1)^{\ii t/2})(S_j) - S_j$. 
We have $\Psi_t^*\Psi_t(|\alpha\rag) = \left(\left(\frac{|\alpha| + 2}{|\alpha| + 1}\right)^{\ii t/2} - 1\right)S_j|\alpha\rag$.
Let $N\in \N$ and set:
$$\Psi_t^{(N)} \colon |\alpha\rag \mapsto \left\lbrace \begin{matrix}
\Psi_t(|\alpha\rag) \text{ if } |\alpha| \leq N \\
0 \text{ otherwise.}
\end{matrix}\right.$$
The operator $\Psi_t^{(N)}$ is compact and as in the one dimensional case we have:
\begin{align*}
\|\Psi_t^{(N)} - \Psi_t\| &= \sup_{|\alpha|>N}\sup_{1\leq j \leq d}\left| \left(\frac{|\alpha|+2}{|\alpha|+1}\right)^{\ii t/2} - 1\right| \left(\frac{\alpha_j+1}{|\alpha|+1}\right)^{1/2}\\
					   &= \sup_{|\alpha|>N} \left| \left(\frac{|\alpha|+2}{|\alpha|+1}\right)^{\ii t/2} - 1\right|\\
					   &=\sup_{k>N} \left| \left(\frac{k+2}{k+1}\right)^{\ii t/2} - 1\right|
\end{align*}
The end of the proof is then identical to the one of Lemma \ref{WellDefd1}. We obtain, as before, that $\lim_{N \to \infty}\Psi_t^{(N)} = \Psi_t$ in norm. The operator $\Psi_t$ is thus compact, which proves the lemma.
\end{proof}

\begin{cor}We have the exact sequence:
$$\xymatrix{0\ar[r] & \CCC_0(\R^*_+,\mathcal{K}(\F^+(W))) \ar[r] & \T(W)\rtimes \R \ar[r] & \CCC_0(W^*\setminus\{0\}) \ar[r] & 0.}$$
\end{cor}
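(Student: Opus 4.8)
The plan is to obtain the sequence by applying the functor $-\rtimes_\alpha \R$ to the Toeplitz extension of Proposition \ref{ToepExactSeq} and then identifying each of the three resulting terms. Since $\R$ is amenable, the full crossed product is exact (and agrees with the reduced one), so from
\[0 \to \mathcal{K}(\F^+(W)) \to \T(W) \to \CCC(\mathbb{S}^*W) \to 0\]
(the cosphere being compact, $\CCC_0(\mathbb{S}^*W) = \CCC(\mathbb{S}^*W)$) I get the short exact sequence
\[0 \to \mathcal{K}(\F^+(W))\rtimes_\alpha \R \to \T(W)\rtimes_\alpha \R \to \CCC(\mathbb{S}^*W)\rtimes_\alpha \R \to 0.\]
The middle term is left as it is, and it remains only to identify the ideal and the quotient.

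For the ideal, I would argue exactly as in the lemma computing $\mathcal{K}\rtimes_\alpha \R$ in the one-dimensional case. The automorphisms $\alpha_t$ restrict on $\mathcal{K}(\F^+(W))$ to $\Ad((N+1)^{\ii t/2})$, and each $(N+1)^{\ii t/2}$ is a unitary of $\mathcal{B}(\F^+(W)) = M(\mathcal{K}(\F^+(W)))$, so the restricted action is \emph{inner}. Moreover $t\mapsto (N+1)^{\ii t/2}$ is strictly continuous as a path of multipliers: for compact $a$ the maps $t\mapsto (N+1)^{\ii t/2}a$ and $t \mapsto a(N+1)^{\ii t/2}$ are norm continuous, because strong convergence composed with a compact operator yields norm convergence. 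Consequently the twist can be removed, exactly as in the $d=1$ lemma: the map $a\otimes f \mapsto (s\mapsto a\,f(s)(N+1)^{-\ii s/2})$ extends to an isomorphism $\mathcal{K}(\F^+(W))\otimes C^*(\R) \isomto \mathcal{K}(\F^+(W))\rtimes_\alpha \R$, and by Pontryagin duality $C^*(\R)\cong \CCC_0(\R^*_+)$ this identifies the ideal with $\CCC_0(\R^*_+, \mathcal{K}(\F^+(W)))$.

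For the quotient, the preceding lemma shows that $\alpha_t$ acts trivially on $\CCC(\mathbb{S}^*W)$, so
\[\CCC(\mathbb{S}^*W)\rtimes_\alpha \R \cong \CCC(\mathbb{S}^*W)\otimes C^*(\R) \cong \CCC(\mathbb{S}^*W)\otimes \CCC_0(\R^*_+) \cong \CCC_0(\mathbb{S}^*W\times \R^*_+),\]
and polar coordinates $(\xi,r)\mapsto r\xi$ give the homeomorphism $\mathbb{S}^*W\times \R^*_+ \cong W^*\setminus\{0\}$, whence $\CCC_0(W^*\setminus\{0\})$. Substituting these two identifications into the exact sequence above yields the claim.

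I do not expect a genuine obstacle, since every step mirrors the $d=1$ computations already carried out. The one point deserving care is the strict continuity of $t\mapsto(N+1)^{\ii t/2}$ as a family of multipliers of $\mathcal{K}(\F^+(W))$: this is precisely where compactness enters and is what legitimizes both the exactness bookkeeping and the untwisting of the inner action on the ideal. Everything else is formal, given amenability of $\R$ and the Pontryagin identification $C^*(\R)\cong\CCC_0(\R^*_+)$.
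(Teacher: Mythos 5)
Your proposal is correct and follows essentially the same route as the paper: the paper's proof likewise applies the (exact, by amenability of $\R$) crossed-product functor to the Toeplitz extension, untwists the inner action on $\mathcal{K}(\F^+(W))$ exactly as in the $d=1$ lemma, and uses triviality of the action on the quotient together with $C^*(\R)\cong\CCC_0(\R^*_+)$ and polar coordinates. You have merely written out the details that the paper delegates to ``similar as the one dimensional case.''
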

\begin{proof}
Similar as the one dimensional case. The crossed product preserves the exact sequence. The action is trivial on the quotient and the crossed product is trivial on the ideal of compact operators.
\end{proof}

\subsection{The number operator (plus one) on the Heisenberg group}

Let $(V,\omega)$ be a symplectic vector space of dimension $2d$. As before, we choose a compatible complex structure $J\in \End(V)$. Let $X_1,\cdots,X_d,Y_1,\cdots,Y_d$ a Darboux basis. We get a basis of $\heis(V,\omega)$ with $X_1,\cdots,X_d,Y_1,\cdots,Y_d,Z$. We say the Darboux basis is compatible with the complex structure $J$ if $\forall 1\leq j \leq d, JX_j = -Y_j, JY_j = X_j$. This is equivalent to $W_j = \frac{1}{\sqrt{2}}(X_j+ \ii Y_j), 1\leq j \leq d$ forming an orthonormal basis of $V^{1,0}$ for the hermitian structure given by $\ii\omega(\cdot, \overline{\cdot})$.

Let us consider the following (left-invariant) differential operators on $\Heis(V,\omega)$:
$$\Delta_{\pm} := -\frac{1}{2}\sum_{j = 1}^d X_j^2 + Y_j^2 \mp \ii \left(1-\frac{d}{2}\right) Z.$$

We can rewrite it in terms of the complex vectors $W_j, 1\leq j \leq d$. We have:

\begin{align*}
\Delta_+ &= -\sum_{j = 1}^d \overline{W_j}W_j - \ii (d+1) Z = -\sum_{j = 1}^d W_j\overline{W_j} - \ii Z,\\
\Delta_- &= -\sum_{j = 1}^d \overline{W_j}W_j + \ii Z = -\sum_{j = 1}^d W_j\overline{W_j} + \ii (d+1) Z.
\end{align*}

We have a graduation on $\heis(V,\omega)$ where $V$ is the first stratum and $\R Z$ the second one. This is compatible with the Lie bracket so we obtain a graduation on the universal envelopping algebra $\mathcal{U}(\heis(V,\omega)\otimes \CC)$. The later is isomorphic (as an algebra) to the algebra of (left-invariant) differential operators on $\Heis(V,\omega)$. We thus transport the graduation to the algebra of differential operators, giving a new notion of order. With this notion of order $\Delta_{\pm}$ are still of order $2$, but now all of its summands have order exactly 2.

\begin{prop}Given two Darboux bases for $V$: $X_1,\cdots,X_d,Y_1,\cdots,Y_d$ and $X'_1,\cdots,X'_d,Y'_1,\cdots,Y'_d$, let us denote by $\Delta_{\pm},\Delta'_{\pm}$ the respective operators. If the two bases are compatible with the same complex structure then $\Delta_{\pm}=\Delta'_{\pm}$.\end{prop}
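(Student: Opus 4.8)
The plan is to exploit the rewriting of $\Delta_\pm$ in terms of the complex frame $W_j = \frac{1}{\sqrt2}(X_j + \ii Y_j)$ already recorded just above the statement, namely $\Delta_+ = -\sum_{j=1}^d \overline{W_j}W_j - \ii(d+1)Z$ together with the analogous expression $\Delta_- = -\sum_{j=1}^d \overline{W_j}W_j + \ii Z$. Since $Z$ is the canonical generator of the centre of $\heis(V,\omega)$, fixed once and for all by $\omega$ via $[v,w]=\omega(v,w)Z$, it does not depend on the chosen Darboux basis. Hence everything reduces to proving that the element $\sum_{j=1}^d \overline{W_j}W_j$ of the universal enveloping algebra $\mathcal{U}(\heis(V,\omega)\otimes \CC)$ depends only on $J$ and not on the particular compatible Darboux basis producing it.

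Next I would observe that a Darboux basis compatible with $J$ is precisely the data of an orthonormal basis $(W_j)_{1\leq j\leq d}$ of the hermitian space $(V^{1,0}, \ii\omega(\cdot,\overline{\cdot}))$, a space determined by $J$ alone; this is the equivalence already noted when introducing compatible bases, and it follows from the computation $\langle W_j, W_k\rangle = \ii\omega(W_j,\overline{W_k}) = \delta_{jk}$ using the Darboux relations $\omega(X_j,Y_k)=\delta_{jk}$. So two compatible Darboux bases give two orthonormal bases $(W_j)$ and $(W'_j)$ of the \emph{same} space $V^{1,0}$, hence are related by a unitary matrix $U=(u_{jk})\in \mathcal{U}(d)$ via $W'_j=\sum_k u_{jk}W_k$, with $\overline{W'_j}=\sum_l \overline{u_{jl}}\,\overline{W_l}$. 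The key computation is then the substitution
\[
\sum_{j=1}^d \overline{W'_j}W'_j = \sum_{k,l}\Bigl(\sum_j \overline{u_{jl}}\,u_{jk}\Bigr)\overline{W_l}W_k = \sum_{k,l}(U^*U)_{lk}\,\overline{W_l}W_k = \sum_{k=1}^d \overline{W_k}W_k,
\]
the last equality using $U^*U=\Id$. I would stress that this identity holds in the \emph{noncommutative} enveloping algebra without any reordering: the chosen ordering $\overline{W_l}W_k$ is preserved under the substitution and the coefficients collapse to $\delta_{lk}$ before one ever needs to commute an $\overline{W_l}$ past a $W_k$.

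Combining the two points, $\Delta_+ = -\sum_j \overline{W_j}W_j - \ii(d+1)Z$ takes the same value for both bases, and the identical argument applied to $\Delta_- = -\sum_j \overline{W_j}W_j + \ii Z$ settles $\Delta_-$. There is essentially no hard step here: the only genuine subtlety worth flagging is the ordering remark above, since a careless reader might worry that noncommutativity of $V^{1,0}$ against $V^{0,1}$ obstructs the invariance, whereas in fact the unitarity of the change of frame makes the cross terms cancel at the level of coefficients.
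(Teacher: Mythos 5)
Your proof is correct and follows essentially the same route as the paper: both reduce the statement to the invariance of $\sum_j \overline{W_j}W_j$ under a unitary change of the frame $(W_j)$ of $V^{1,0}$. The only cosmetic difference is that the paper obtains unitarity of the transition matrix from a lemma on real symplectic matrices commuting with $J$ (showing the complexified transition matrix $A-\ii B$ is unitary), whereas you get it directly from the observation that compatible Darboux bases correspond to orthonormal bases of the hermitian space $V^{1,0}$; you also write out explicitly the $U^*U=\Id$ computation that the paper leaves as ``we easily see''.
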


\begin{lem}Let $M \in M_{2d}(\R)$. If $M$ is a symplectic matrix and commutes with the matrix $\begin{pmatrix}0 & I_d \\ -I_d & 0\end{pmatrix}$ then it is of the form:
$$M = \begin{pmatrix}A & B \\ -B & A\end{pmatrix}, \tsp A A+\tsp B B = I_d, \tsp A B = \tsp B A.$$
These conditions are equivalent to $A-\ii B \in \mathcal{U}_d(\CC)$.\end{lem}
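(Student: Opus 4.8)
The plan is a direct block computation in three steps: first extract the shape of $M$ from the commutation relation, then unpack the symplecticity relation into two matrix identities, and finally repackage those identities as a single unitarity statement. Throughout, write $J := \begin{pmatrix} 0 & I_d \\ -I_d & 0\end{pmatrix}$ and decompose $M$ into $d\times d$ blocks as $M = \begin{pmatrix} A & B \\ C & D\end{pmatrix}$.

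First I would impose that $M$ commutes with $J$. Computing $MJ = \begin{pmatrix} -B & A \\ -D & C\end{pmatrix}$ and $JM = \begin{pmatrix} C & D \\ -A & -B\end{pmatrix}$ and comparing blocks yields $C = -B$ and $D = A$, so that $M$ already has the claimed shape $M = \begin{pmatrix} A & B \\ -B & A\end{pmatrix}$. Note that this step uses only the commutation hypothesis and not yet symplecticity.

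Next I would expand the symplecticity condition $\tsp M J M = J$. With $\tsp M = \begin{pmatrix} \tsp A & -\tsp B \\ \tsp B & \tsp A\end{pmatrix}$ and $JM = \begin{pmatrix} -B & A \\ -A & -B\end{pmatrix}$, the product $\tsp M J M$ has top-left block $\tsp B A - \tsp A B$, top-right block $\tsp A A + \tsp B B$, and the two lower blocks equal to the negatives of these. Matching against $J$ then forces the two identities $\tsp A A + \tsp B B = I_d$ (from the top-right entry) and $\tsp A B = \tsp B A$ (from the off-diagonal entries); the remaining blocks give no new information.

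Finally, for the stated equivalence, set $U := A - \ii B$, so that $U^* = \tsp A + \ii \tsp B$. Expanding $U^* U = (\tsp A A + \tsp B B) + \ii(\tsp B A - \tsp A B)$ and separating real and imaginary parts shows that $U^* U = I_d$ holds if and only if $\tsp A A + \tsp B B = I_d$ and $\tsp A B = \tsp B A$, i.e. exactly the two conditions just obtained, so $A - \ii B \in \mathcal{U}_d(\CC)$. There is no genuine obstacle here beyond careful sign bookkeeping; the only points requiring attention are fixing the matrix convention for the symplectic group (namely $\tsp M J M = J$) and the sign convention $U = A - \ii B$ for the complexification, chosen precisely so that the real and imaginary parts of $U^*U - I_d$ reproduce the two equations with the correct signs.
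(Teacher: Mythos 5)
Your proof is correct and follows the same route as the paper's (which merely sketches the identical block computation): derive the block shape from commutation with $J$, expand $\tsp M J M = J$ to get the two identities, and identify them with the real and imaginary parts of $U^*U = I_d$ for $U = A - \ii B$. The only slip is cosmetic: the bottom-right block of $\tsp M J M$ equals the top-left block $\tsp B A - \tsp A B$ rather than its negative, but since both are forced to vanish this does not affect the conclusion.
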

\begin{proof}
The commutation with the complex structure gives $M = \begin{pmatrix}A & B \\ -B & A\end{pmatrix}$ for some $A,B \in M_d(\R)$ and the compatibility with the symplectic structure gives the two conditions on $A$ and $B$.
\end{proof}

\begin{proof}[Proof of the proposition]
Consider two Darboux basis compatible with the same structure $J$. The transition matrix then satisfies the condition of the lemma. The transition matrix from the basis $W_1,\cdots, W_d$ to $W'_1,\cdots,W'_d$ is given by $A-\ii B$ which is unitary. From this unitarity we easily see that $\sum_{j = 1}^d \overline{W_j}W_j = \sum_{j = 1}^d \overline{W'_j}W'_j$ thus $\Delta_{\pm} = \Delta'_{\pm}$.
\end{proof}

These differential operators can be considered as (one plus) the number operator on the Heisenberg group. Indeed, let $\lambda > 0$ and consider the representation on the symmetric Fock space $\pi_{\lambda} \in \widehat{\Heis(V,\omega)}$. We see $\pi_{\lambda}$ at the level of the Lie algebra (so a representation with anti-self-adjoint, unbounded operators) and extend it to the universal envelopping algebra. Remember that for $1\leq j \leq d$ we have $\pi(W_j) = \ii \sqrt{\lambda} A_j^*$ where the $A_j^*$ are the creation operators. We thus have:
$$\pi_{\lambda}(\Delta_+) = \lambda (N+1).$$
Similarly for $\lambda <0$:
$$\pi_{\lambda}(\Delta_-) = -\lambda (N+1).$$

\begin{rem}Although $\Delta_{\pm}$ had degree $2$ we get here a homogeneity of degree 1 in $\lambda$. This is explained by the fact that scaling the representation by a factor $\lambda >0$ scales the images of elements of $V$ (i.e. operators of order 1) by a factor $\sqrt{\lambda}$ as shown in Section \ref{HeisenbergIntroduction}.
\end{rem}

We can also compute the image of $\Delta_{\pm}$ in the other irreducible representations. For $\mu\in V^*$ we have:
$$\pi_{\mu}(\Delta_{\pm}) = \frac{1}{2}|\mu|^2.$$

We unfortunately have to use these two operators instead of just one (until Section \ref{Symbols technicalities} where we will glue the two). The problem is that they both have a bad behavior on the representations of the opposite sign. If $\lambda <0$ we have:
$$\pi_{\lambda}(\Delta_+) = -\lambda (N - d - 1),$$
and similarly for $\Delta_-$ in the positive representations. In particular they are not positive anymore and have a kernel ($\pi_{-1}(\Delta_+)$ vanishes on $\Sym^d(V^{0,1})$). We therefore need to consider these operators not on the whole Heisenberg $C^*$-algebra but on its quotients $I_{\pm}(V,\omega)$.

\subsection{Construction of the crossed product and isomorphism}

We will use the following results, whose proofs are postponed until Section \ref{Symbols technicalities} as they would disrupt the spirit of the rest of the proof, and need different tools that we have not introduced yet. As before, we fix a compatible complex structure \(J \colon V \to V\), and an orthonormal basis \(W_1,\cdots,W_d \in V^{1,0}\).

\begin{thm}\label{Toeplitz as Heisenberg multipliers}
    The operators $-\ii W_j\Delta_{+}^{-1/2}$ are densely defined on $I_+(V,\omega)$ and extend continuously to elements of $M(I_+(V,\omega))$. 
    The same result applies to $I_-(V,\omega)$ with the operators $-\ii \overline{W_j}\Delta_-^{-1/2}$.

    Equivalently, given a \(v\in V^{1,0}\) and an element \(f \in I_+(V,\omega)\), the continuous map \(\lambda > 0 \mapsto S_v\pi_{\lambda}(f)\) extends to an element of \(I_+(V,\omega)\). The continuous map on \(V^*\setminus \{0\}\cong \mathbb{S}^*V \times \R^*_+\) obtained at \(\lambda = 0\) is given by \((\tilde{v}\otimes 1)\pi_0(f)\) where \(\tilde{v}\in \CCC(\mathbb{S}^*V)\) is the map described in Remark \ref{QuotientToeplitz}.
\end{thm}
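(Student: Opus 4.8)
The plan is to exhibit each $m_j := -\ii W_j\Delta_+^{-1/2}$ as a bounded multiplier of $I_+(V,\omega)$, by first reading it off in every fibre and then controlling the gluing at $\lambda = 0$, which is the only real difficulty. For $\lambda > 0$ we have $\pi_\lambda(\Delta_+) = \lambda(N+1)$ and $\pi_\lambda(W_j) = \ii\sqrt\lambda\,A_j^*$, so the powers of $\lambda$ cancel and
\[
\pi_\lambda(m_j) = A_j^*(N+1)^{-1/2}.
\]
Since $A_j^* = S_j(N+1)^{1/2}$ (immediate on the basis $|\alpha\rag$), this gives $\pi_\lambda(m_j) = S_j$, \emph{independently of $\lambda > 0$}; this is the $v = W_j$ instance of the reformulation $\lambda \mapsto S_v\pi_\lambda(f)$. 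On the zero fibre, $\pi_\mu(\Delta_+) = \tfrac12|\mu|^2$ while $\pi_\mu(W_j)$ is linear in $\mu$, so $\pi_\mu(m_j)$ is a scalar that is homogeneous of degree $0$ in $\mu$; it descends to $\mathbb{S}^*V$ and one checks it is the symbol $\tilde v$ of Remark \ref{QuotientToeplitz}. It therefore remains to prove that the fibrewise bounded family $(\pi_\lambda(m_j))_{\lambda \geq 0}$ is a \emph{continuous} field of multipliers, continuity at $\lambda = 0$ being the crux.

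To give $\Delta_+^{-1/2}$ a global meaning I would first check that $\Delta_+$ is a non-negative, self-adjoint, regular unbounded multiplier of $I_+(V,\omega)$. Fibrewise its spectrum is $\{\lambda(k+1) : k\in\N\}$ for $\lambda > 0$ and $\{\tfrac12|\mu|^2\}$ for $\mu \neq 0$, in both cases contained in $(0,\infty)$; in particular $\Delta_+$ is injective with dense range in every nontrivial irreducible representation occurring in $I_+$, which is the Rockland (hypoellipticity) condition and yields essential self-adjointness and regularity of $\Delta_+$ as an operator affiliated with $M(I_+(V,\omega))$. Note that $0$ nevertheless lies in the global spectrum, since the bottom eigenvalue $\lambda$ of the fibre at $\lambda$ tends to $0$; hence $\Delta_+^{-1/2}$, defined by functional calculus, is genuinely \emph{unbounded} (of Heisenberg order $-1$), and only its product with the order-$1$ operator $W_j$ can be expected to be bounded. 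This is also why one must work on $I_+$ rather than the full group algebra: on the negative representations $\Delta_+$ loses positivity and acquires a kernel (it vanishes on $\Sym^d(V^{0,1})$ in $\pi_{-1}$), so no global square root exists there.

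The heart of the matter, and the reason it is deferred to Section \ref{Symbols technicalities}, is the continuity at $\lambda = 0$. I would phrase $m_j$ as a Heisenberg pseudodifferential operator of order $0$ and invoke two properties of the filtered calculus: that the complex powers of a positive Rockland operator remain in the calculus with the expected order (a Seeley-type theorem), so $\Delta_+^{-1/2}$ has order $-1$ and $m_j = (-\ii W_j)\Delta_+^{-1/2}$ has order $1 + (-1) = 0$; and that order-$0$ operators are bounded and define multipliers of the continuous field $I_+(V,\omega)$, with principal symbol depending continuously on $\lambda$ down to $\lambda = 0$. Realizing the family $(\pi_\lambda)_{\lambda \geq 0}$ inside this calculus, membership of $m_j$ in the order-$0$ class is precisely the assertion that $\lambda \mapsto S_v\pi_\lambda(f)$ is a continuous section with limit $(\tilde v\otimes 1)\pi_0(f)$ at $\lambda = 0$. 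The main obstacle is thus entirely contained in these two statements — stability of the complex powers $\Delta_+^{z}$ and the uniform-down-to-$\lambda=0$ boundedness of order-$0$ symbols together with the identification of the limiting symbol — which require the symbolic machinery of Section \ref{Symbols technicalities} and are out of reach of the elementary truncation estimates of Lemma \ref{WellDefd1}. Granting them, $m_j \in M(I_+(V,\omega))$ with $\pi_\lambda(m_j) = S_j$ ($\lambda > 0$) and $\pi_0(m_j) = \tilde v$; linearity in $v$ gives the general $v \in V^{1,0}$, and the case $I_-(V,\omega)$ follows from the $\omega \mapsto -\omega$ symmetry, which exchanges $V^{1,0}$ with $V^{0,1}$ and replaces $W_j,\Delta_+$ by $\overline{W_j},\Delta_-$.
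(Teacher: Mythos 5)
Your proposal is correct and follows essentially the same route as the paper: compute the fibres explicitly (\(\pi_\lambda(-\ii W_j\Delta_+^{-1/2}) = A_j^*(N+1)^{-1/2} = S_j\) for \(\lambda>0\), a degree-zero homogeneous scalar on the zero fibre), then reduce the continuity at \(\lambda=0\) to two facts from the Heisenberg pseudodifferential calculus — that complex powers of the positive operator \(\Delta_+\) (restricted to the upper hemisphere, where it is elliptic and invertible) stay in the calculus with order \(-1\), and that order-\(0\) symbols extend to multipliers of \(I_+(V,\omega)\) with limiting value at \(\lambda=0\) given by the principal symbol. This is exactly the argument the paper carries out in Section \ref{Symbols technicalities} via the Weyl operators \(\Delta_{+,+}^{-1/2}\) and Theorem \ref{Heisenberg Equal Two Weyl}.
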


\begin{thm}\label{Family of unitary Heisenberg multipliers}
    The complex powers $\Delta_+^{\ii t/2}, t\in \R$ are densely defined on $I_+(V,\omega)$ and extend to unitary multipliers of $I_+(V,\omega)$. The resulting group of unitary multipliers is strongly continuous. The same result applies on \(I_-(V,\omega)\) with $\Delta_-^{\ii t/2}, t\in \R$.

    Equivalently, given a section \(f \in I_+(V,\omega)\), the continuous map \(\lambda > 0 \mapsto \lambda^{\ii t/2}(N+1)^{\ii t/2}\pi_{\lambda}(f)\) extends to an element of \(I_+(V,\omega)\). The continuous map on \(V^*\setminus \{0\}\cong \mathbb{S}^*V \times \R^*_+\) obtained at \(\lambda = 0\) is given by \(\mu\mapsto \frac{1}{2^{\ii t/2}}|\mu|^{\ii t} \pi_0(f)(\mu)\), where \(|\cdot|\) is the norm on \(V^*\) dual to the one induced by the \(\omega\) and \(J\) on \(V\).    
\end{thm}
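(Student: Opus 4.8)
The plan is to treat $\Delta_+$ as a strictly positive, self-adjoint \emph{unbounded multiplier} of $I_+(V,\omega)$ (equivalently, a regular operator affiliated to it) and to obtain $\Delta_+^{\ii t/2}$ from the functional calculus for such operators, for which unitarity, the one-parameter-group law, and strong continuity are formal consequences. The entire difficulty then lies in establishing this affiliation together with the value of the power on the $\lambda = 0$ fiber, and it is exactly here that I would invoke the filtered calculus developed in Section \ref{Symbols technicalities}.

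First I would use the computations already made: $\pi_\lambda(\Delta_+) = \lambda(N+1)$ for $\lambda > 0$ and $\pi_\mu(\Delta_+) = \tfrac{1}{2}|\mu|^2$ for a character $\mu \in V^* \setminus \{0\}$. Each image is a positive self-adjoint operator, and each is \emph{injective} precisely because passing to $I_+(V,\omega)$ discards the trivial representation (where $\Delta_+$ would acquire a kernel, as noted before the statement). This is the Rockland property: $\Delta_+$ is Heisenberg-homogeneous of graded degree $2$ and invertible in every nontrivial representation occurring in $I_+(V,\omega)$. Granting that $\Delta_+$ is consequently a strictly positive unbounded multiplier, its complex power $\Delta_+^{\ii t/2}$ is a unitary multiplier with adjoint $\Delta_+^{-\ii t/2}$, and $t \mapsto \Delta_+^{\ii t/2}$ is strongly continuous. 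Fiberwise unitarity and the announced values follow because each evaluation $\pi_\lambda$ intertwines the functional calculus: $\pi_\lambda(\Delta_+^{\ii t/2}) = \lambda^{\ii t/2}(N+1)^{\ii t/2}$ is unitary on $\F^+(V^{1,0})$ for $\lambda > 0$, while $\pi_\mu(\Delta_+^{\ii t/2}) = \big(\tfrac{1}{2}|\mu|^2\big)^{\ii t/2} = 2^{-\ii t/2}|\mu|^{\ii t}$ is a unimodular scalar.

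The step I expect to be the main obstacle is the affiliation itself, which is invisible to elementary functional analysis: as $\lambda \to 0$ the scalar $\lambda^{\ii t/2} = e^{(\ii t/2)\log \lambda}$ oscillates without limit, so $\lambda \mapsto \lambda^{\ii t/2}(N+1)^{\ii t/2}\pi_\lambda(f)$ can extend continuously to $\lambda = 0$ only after the rescaling built into the continuous field near the origin, i.e. the nilpotent approximation underlying Section \ref{Symbols technicalities}. I would make this precise through the Heisenberg calculus. For $\mathrm{Re}(z) < 0$ the convolution kernel of $\Delta_+^{z}$ is a function, Heisenberg-homogeneous of degree $-Q - 2z$ where $Q = 2d+2$ is the homogeneous dimension; this degree has real part $> -Q$, so the kernel is locally integrable. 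Following the classical analysis of complex powers of positive Rockland operators (or directly the symbol calculus of Section \ref{Symbols technicalities}), one continues this holomorphic family to the boundary $\mathrm{Re}(z) = 0$, where $\Delta_+^{\ii t/2}$ becomes an order-$0$ operator of the calculus (critical kernel degree $-Q - \ii t$, with an admissible logarithmic correction). The continuity and symbol results of Section \ref{Symbols technicalities} then say that any order-$0$ operator is a multiplier of the continuous field and that its image in the $\lambda = 0$ fiber is the evaluation of its principal symbol. Since the principal symbol of $\Delta_+$ is precisely $\tfrac{1}{2}|\mu|^2$, that of $\Delta_+^{\ii t/2}$ is its complex power $2^{-\ii t/2}|\mu|^{\ii t}$, which matches the claim. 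The statement for $I_-(V,\omega)$ follows verbatim upon replacing $\omega$ by $-\omega$ and each $W_j$ by $\overline{W_j}$.
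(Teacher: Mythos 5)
Your overall strategy --- realize $\Delta_+^{\ii t/2}$ as an order-$\ii t$ element of the filtered calculus and then invoke the multiplier and principal-symbol results of Section \ref{Symbols technicalities} --- is the paper's strategy, and your identification of the $0$-fiber value $2^{-\ii t/2}|\mu|^{\ii t}$ as the complex power of the principal symbol $\tfrac{1}{2}|\mu|^2$ is the right endgame. However, the primary technical route you propose for the affiliation step has a genuine problem. You want to build $\Delta_+^{z}$, $\Re(z)<0$, as a convolution operator on the group with a $\delta$-homogeneous kernel of degree $-Q-2z$, ``following the classical analysis of complex powers of positive Rockland operators''. But $\Delta_+$ is \emph{not} a positive Rockland operator on $\Heis(V,\omega)$: in the representations $\pi_\lambda$ with $\lambda<0$ one has $\pi_\lambda(\Delta_+)=-\lambda(N-d-1)$, which is neither positive nor injective ($\pi_{-1}(\Delta_+)$ has a kernel). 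This is stated explicitly at the end of Section \ref{IsomorphismSection} and is precisely why the paper needs the two operators $\Delta_\pm$ and works on the quotients $I_\pm(V,\omega)$ rather than on $C^*_0(\Heis(V,\omega))$. (You attribute the kernel to the trivial representation; it actually appears in the negative-parameter representations.) Consequently $\Delta_+$ is not positive on $L^2(\Heis(V,\omega))$, its complex powers cannot be defined by functional calculus there, and the Rockland machinery producing homogeneous convolution kernels on the group does not apply as stated.

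The paper circumvents this by never taking complex powers of $\Delta_+$ itself: it restricts the degree-$2$ Heisenberg symbol of $\Delta_+$ to the hemisphere $V^*\times\{+1\}$, obtaining a Weyl operator $\Delta_{+,+}\in\mathcal{W}^2(V^*,\omega^*)$ that \emph{is} positive and invertible (it is unitarily equivalent to $N+1$ in the Bargmann--Fock representation), takes its complex powers inside the Weyl calculus to get $\Delta_{+,+}^{\ii t/2}\in\mathcal{W}^{\ii t}(V^*,\omega^*)$, and then transfers back to a multiplier of $I_+(V,\omega)$ via Theorem \ref{Heisenberg Equal Two Weyl} and its corollaries, which is also where the $0$-fiber value $\sigma_0(\Delta_{+,+}^{\ii t/2})=2^{-\ii t/2}|\mu|^{\ii t}$ comes out. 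Your parenthetical fallback ``or directly the symbol calculus of Section \ref{Symbols technicalities}'' is in fact the route that works; you should make it the main argument and either drop the group-side kernel construction or restrict it to the positive-frequency subspace, where $\Delta_+$ is genuinely positive.
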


With these two theorems, we can construct the desired map from the crossed product of the Toeplitz algebras to \(I_{\pm}(V,\omega)\) and get the desires isomorphism. 
Remember that there is an action \(\delta\) of the postitive real numbers on the Heisenberg group by inhomogeneous dilations. This action restricts to both \(I_{\pm}(V,\omega)\). We also want the map \(\mathcal{T}(V^{1,0})\rtimes \R \to I_+(V,\omega)\) to be \(\R^*_+\)-equivariant for the dual action on the left and the inhomogeneous dilations on the right. Let us state the main theorem of this paper.

\begin{thm}\label{Main Isom}Let \((V,\omega)\) be a symplectic vector space, endowed with a compatible complex structure. There are canonical $*$-homomorphisms: 
\[\T(V^{1,0}) \rtimes \R \to I_+(V,\omega), \ \T(V^{0,1}) \rtimes \R \to I_-(V,\omega),\] 
 that are isomorphisms of $C^*$-algebras. Moreover they are $\R^*_+$-equivariant for the dual action on the left side and the action induced by the inhomogeneous dilations $(\delta_{\lambda})_{\lambda>0}$ on the right side.
\end{thm}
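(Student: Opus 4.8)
The plan is to build the map out of the two pieces provided by Theorems \ref{Toeplitz as Heisenberg multipliers} and \ref{Family of unitary Heisenberg multipliers} and then verify that the covariance relation \eqref{Equiv} holds, so that the universal property of the crossed product produces a $*$-homomorphism $\T(V^{1,0})\rtimes\R\to I_+(V,\omega)$. Concretely, Theorem \ref{Toeplitz as Heisenberg multipliers} gives us multipliers $u_j := -\ii W_j\Delta_+^{-1/2}\in M(I_+(V,\omega))$; I would first check that these are isometries satisfying the Toeplitz relations, i.e. that they generate a copy of $\T(V^{1,0})$ inside $M(I_+(V,\omega))$. The cleanest way is to test in each infinite-dimensional irreducible representation $\pi_\lambda$ ($\lambda>0$), where $\pi_\lambda(W_j)=\ii\sqrt{\lambda}A_j^*$ and $\pi_\lambda(\Delta_+)=\lambda(N+1)$, so that $\pi_\lambda(u_j)=A_j^*(N+1)^{-1/2}=S_j$ is exactly the shift; since the family of these representations (together with the $\lambda=0$ fiber, controlled by the continuity statement in the theorem) is faithful on $I_+(V,\omega)$, the operator relations among the $u_j$ coincide with those among the $S_j$, yielding a $*$-homomorphism $\vphi_+\colon\T(V^{1,0})\to M(I_+(V,\omega))$ sending $S_{W_j}$ to $u_j$. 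Similarly, Theorem \ref{Family of unitary Heisenberg multipliers} supplies a strongly continuous group of unitary multipliers $\beta(t):=\Delta_+^{\ii t/2}$.

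\textbf{The covariance relation.} Next I would verify \eqref{Equiv}, namely $\vphi_+\circ\alpha_t=\Ad(\beta(t))\circ\vphi_+$. Again this reduces to a fiberwise computation: in $\pi_\lambda$ ($\lambda>0$) the left side sends $S_{W_j}$ to $\pi_\lambda(\vphi_+(\alpha_t(S_{W_j})))=\Ad((N+1)^{\ii t/2})(S_j)$ (using the definition of $\alpha$ and that $\pi_\lambda(\Delta_+^{\ii t/2})$ acts as $\lambda^{\ii t/2}(N+1)^{\ii t/2}$, with the scalar $\lambda^{\ii t/2}$ cancelling in the conjugation), while the right side gives $\Ad((N+1)^{\ii t/2})(S_j)$ as well; so the two agree on every positive fiber, and by faithfulness the relation holds in $M(I_+(V,\omega))$. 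Invoking the universal property of the full crossed product (equivalently, the reduced one, since $\R$ is amenable) then yields the canonical $*$-homomorphism $\T(V^{1,0})\rtimes\R\to I_+(V,\omega)$, and it is $\R^*_+$-equivariant because the dilations $\delta_\lambda$ rescale $\Delta_+$ homogeneously, matching the dual $\R^*_+$-action on the crossed product.

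\textbf{Proving it is an isomorphism.} To show the map is an isomorphism I would use the two exact sequences already established: the one for $\T(V^{1,0})\rtimes\R$ (Corollary after the $\R$-action lemma) and the one for $I_+(V,\omega)$, and argue by the five lemma. Thus I would check separately that the induced map on the ideals $\CCC_0(\R^*_+,\mathcal{K}(\F^+(V^{1,0})))\to\CCC_0(\R^*_+,\mathcal{K}(\F^+(V^{1,0})))$ is an isomorphism and that the induced map on the quotients $\CCC_0(V^*\setminus\{0\})\to\CCC_0(V^*\setminus\{0\})$ is an isomorphism. For the quotient, the content is exactly the second part of Theorem \ref{Toeplitz as Heisenberg multipliers}: the shift $\overline{S_{W_j}}$ maps to the function $\widetilde{W_j}\in\CCC(\mathbb{S}^*V)$ of Remark \ref{QuotientToeplitz}, so the quotient map is the identification $\CCC(\mathbb{S}^*V)\otimes\CCC_0(\R^*_+)\cong\CCC_0(V^*\setminus\{0\})$, an isomorphism; for the ideal one matches the trivialization $\mathcal{K}\rtimes_\alpha\R\cong\mathcal{K}\otimes\CCC_0(\R^*_+)$ with the continuous field description of the positive fibers of $I_+(V,\omega)$, each of which is $\mathcal{K}(\F^+(V^{1,0}))$, with $\beta(t)$ implementing the same conjugation. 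Since both the ideal and quotient maps are isomorphisms and the diagram commutes, the five lemma forces the middle map to be an isomorphism. The argument for $I_-(V,\omega)$ is identical after replacing $W_j$ by $\overline{W_j}$, $\Delta_+$ by $\Delta_-$, and $V^{1,0}$ by $V^{0,1}$ (equivalently $\omega$ by $-\omega$), and the two then glue over the common quotient $\CCC_0(V^*\setminus\{0\})$ to give the main isomorphism for the fibered sum.

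\textbf{Main obstacle.} The genuinely hard input is not in the assembly above but in Theorems \ref{Toeplitz as Heisenberg multipliers} and \ref{Family of unitary Heisenberg multipliers} themselves, whose proofs are deferred to Section \ref{Symbols technicalities}: the delicate point is that the fiberwise formulas $\pi_\lambda(u_j)=S_j$ patch together into a genuine multiplier of the continuous field, i.e. that $\lambda\mapsto S_v\pi_\lambda(f)$ remains norm-continuous as $\lambda\to 0$ and has the prescribed $\lambda=0$ limit. Within the five-lemma argument I expect the subtlest bookkeeping to be tracking the scalar factors $\lambda^{\ii t/2}$ and $\tfrac{1}{2^{\ii t/2}}|\mu|^{\ii t}$ that appear in the $\lambda=0$ limit of $\beta(t)$, and confirming that they are precisely absorbed by the identifications $\CCC(\mathbb{S}^*V)\otimes\CCC_0(\R^*_+)\cong\CCC_0(V^*\setminus\{0\})$ so that the commuting square on the quotient is exactly the identity rather than a twisted version of it.
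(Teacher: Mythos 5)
Your proposal is correct and follows essentially the same route as the paper: define $\varphi_+(S_j)=-\ii W_j\Delta_+^{-1/2}$, check it is isometric via the fiberwise identification $\pi_\lambda(\varphi_+(S_j))=S_j$ (the paper phrases this through $\|x\|_{I_+(V,\omega)}=\sup_{t>0}\|x_t\|$, which is your faithfulness argument), verify the covariance relation with $\Delta_+^{\ii t/2}$ to obtain the crossed-product map, and conclude by the five lemma applied to the two exact sequences, with the hard analytic content deferred to Theorems \ref{Toeplitz as Heisenberg multipliers} and \ref{Family of unitary Heisenberg multipliers}. The only addition you make beyond the paper is spelling out the cancellation of the scalar $\lambda^{\ii t/2}$ in the fiberwise covariance check, which the paper leaves implicit.
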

\begin{proof}
    Let $\varphi \colon \T(V^{1,0}) \to M(I_+(V,\omega))$ be defined by $\varphi(S_j) = -\ii W_j\Delta_+^{-1/2}$. Let us show that this map is indeed well defined. We have:
    $$\forall \lambda> 0, \pi_{\lambda}(-\ii W_j\Delta_+^{-1/2}) = A_j^*(N+1)^{-1/2} = S_j.$$
    Therefore $\varphi$ is well defined on the (non-commutative) polynomials in $S_j, S_k^*, 1\leq j,k\leq d$. These are dense in $\T(V^{1,0})$. Now from the relation $\|x\|_{I_+(V,\omega)} = \sup_{t>0}\|x_t\|, s \in I_+(V,\omega)$, we see that the densely defined $\varphi$ is isometric and thus extends continuously to an isometry $\varphi \colon \T(V^{1,0}) \to M(I_+(V,\omega))$.
    Now for $x \in \T(V^{1,0})$ we have the relation:
    $$\forall t\in \R, \Ad(\Delta_+^{\ii t/2})\varphi(x) = \varphi(\Ad((N+1)^{\ii t/2})x).$$
    From this relation we get a $*$-homomorphism $\Phi \colon \T(V^{1,0}) \rtimes \R \to I_+(V,\omega)$. In a similar way we obtain a $*$-homomorphism $\T(V^{0,1}) \rtimes \R \to I_-(V,\omega)$.

    The map \(\Phi\) does not depend on the choice of orthonormal basis of \(V^{1,0}\). First, note that the operators \(\Delta_{\pm}\) do not depend on the choice of basis as discussed in the previous subsection.  Now, if \(v \in V^{1,0}\), then \(S_v\in \T(V^{1,0})\) is mapped by \(\varphi\) to \(-iv\Delta_{+}^{-1/2}\), this operator being understood in the same way as in Theorem \ref{Toeplitz as Heisenberg multipliers}. Thus \(\varphi\) and hence \(\Phi\) do not depend on any choice of basis for \(V^{1,0}\). 
    
    Let us show that \(\Phi\) is an isomorphism. From Theorem \ref{Toeplitz as Heisenberg multipliers} we see that commutators are mapped to \(0\) at \(t = 0\). In particular we get the commutative diagram:

    \[\xymatrix{0 \ar[r] &  \CCC_0(\R^*_+,\mathcal{K}) \ar[r] \ar[d] & \T(V^{1,0}) \rtimes \R \ar[r] \ar[d]^{\Phi} & \CCC_0(V^*\setminus \{0\}) \ar[r] \ar[d] & 0 \\
    0 \ar[r] &  \CCC_0(\R^*_+,\mathcal{K}) \ar[r] & I_+(V,\omega) \ar[r] & \CCC_0(V^*\setminus \{0\}) \ar[r] & 0.}\]
    
    The vertical arrows on the ideals and quotients are the identity (again using Theorem \ref{Toeplitz as Heisenberg multipliers}), in particular they are isomorphism. The rows are exact sequences so by the five lemma, \(\Phi\) is an isomorphism.

    We now check for the equivariance. In order to get it, we need the multipliers obtained from elements of the Toeplitz algebras to be invariant, which is the case. Indeed, we get constant sections of shift operators at non-zero time and the function obtained at \(t = 0\) is invariant under rescaling.
    We also need the relation:
    \[\forall \lambda > 0, \forall t\in \R, \delta_{\lambda}(\Delta_{\pm}^{\ii t/2}) = \lambda^{\ii t}.\]
    This condition is also satisfied\footnote{This condition was the reason for us to consider the conjugation by \(\Delta_{\pm}^{\ii t/2}\) and not with power \(\ii t\). This is only a matter of normalization of the duality between \(\R\) and \(\R^*_+\).}, so \(\Phi\) is \(\R^*_+\)-equivariant.
\end{proof}
\begin{cor}Let $(V,\omega)$ be a symplectic real vector space, endowed with a compatible complex structure. There is a canonical isomorphism:
$$\left(\T(V^{1,0})\bigoplus_{\CCC(\mathbb{S}^*V)}\T(V^{0,1})\right) \rtimes \R \to C^*_0(\Heis(V,\omega)).$$
This isomorphism is $\R^*_+$-equivariant for the dual action on the source space and the inhomogeneous dilations on the target space.
\end{cor}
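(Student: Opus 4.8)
The plan is to derive this corollary from Theorem \ref{Main Isom} by gluing the two component isomorphisms along their common boundary quotient. Recall from Section \ref{HeisenbergIntroduction} that $C^*_0(\Heis(V,\omega))$ is itself the fibered sum $I_+(V,\omega)\bigoplus_{\CCC_0(V^*\setminus\{0\})}I_-(V,\omega)$, the gluing being along the two boundary restriction maps $I_\pm(V,\omega)\to\CCC_0(V^*\setminus\{0\})$. So the strategy has two halves: first move the crossed product inside the fibered sum on the source side, then identify the two pieces with $I_\pm(V,\omega)$ using Theorem \ref{Main Isom} in a way compatible with the gluing data.

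First I would show that the crossed product by $\R$ commutes with the fibered sum, i.e. that there is a natural isomorphism
\[\left(\T(V^{1,0})\bigoplus_{\CCC(\mathbb{S}^*V)}\T(V^{0,1})\right)\rtimes\R \isomto \left(\T(V^{1,0})\rtimes\R\right)\bigoplus_{\CCC(\mathbb{S}^*V)\rtimes\R}\left(\T(V^{0,1})\rtimes\R\right).\]
The fibered sum $\T(V^{1,0})\bigoplus_{\CCC(\mathbb{S}^*V)}\T(V^{0,1})$ sits in a short exact sequence whose kernel is $\mathcal{K}(\F^+(V^{1,0}))$ (via Proposition \ref{ToepExactSeq}) and whose quotient is $\T(V^{0,1})$, with the $\R$-action respecting this sequence. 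Since $\R$ is amenable the crossed product is exact, so applying $-\rtimes\R$ produces the analogous sequence; the same holds for the fibered sum of the crossed products. The canonical map between the two pullbacks, induced by functoriality of the projections, is the identity on kernel and quotient, so the five lemma yields the claimed isomorphism.

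Next I would identify the common spine. Since the $\R$-action fixes each shift $S_w$ modulo $\mathcal{K}(\F^+(V^{1,0}))$, it is trivial on $\CCC(\mathbb{S}^*V)$, so by Pontryagin duality $\CCC(\mathbb{S}^*V)\rtimes\R\cong\CCC(\mathbb{S}^*V)\otimes\CCC_0(\R^*_+)\cong\CCC_0(\mathbb{S}^*V\times\R^*_+)\cong\CCC_0(V^*\setminus\{0\})$, exactly the base over which $C^*_0(\Heis(V,\omega))$ is glued. The crucial compatibility to verify is that the two isomorphisms $\Phi_+,\Phi_-$ of Theorem \ref{Main Isom} induce this same identification on the quotient: by the $\lambda=0$ computation in Theorem \ref{Toeplitz as Heisenberg multipliers}, $\Phi_+$ sends the boundary symbol of $S_v$ to $\tilde v\in\CCC(\mathbb{S}^*V)$ (Remark \ref{QuotientToeplitz}), and the parallel statement for $\Phi_-$ uses the identification $\CCC(\mathbb{S}^*V)\cong\CCC(\mathbb{S}^*V^{0,1})$ fixed at the start of Section \ref{IsomorphismSection}. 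Once these two boundary maps are seen to agree, the pair $(\Phi_+,\Phi_-)$ is a morphism of the two gluing diagrams, hence by the universal property of the fibered sum it assembles into a single $*$-homomorphism, which is an isomorphism because both components are.

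I expect the main obstacle to be this last compatibility check: one must be careful that the two separate identifications $\CCC(\mathbb{S}^*V)\cong\CCC(\mathbb{S}^*V^{1,0})$ and $\CCC(\mathbb{S}^*V)\cong\CCC(\mathbb{S}^*V^{0,1})$ used to set up the Toeplitz-side fibered sum match the single quotient $\CCC_0(V^*\setminus\{0\})$ of $C^*_0(\Heis(V,\omega))$ under $\Phi_+$ and $\Phi_-$ simultaneously, and in particular that passing to the conjugate structure for $I_-(V,\omega)$ introduces no discrepancy (such as a complex conjugation) in the symbol. The equivariance of the glued map is then automatic: both $\Phi_\pm$ are $\R^*_+$-equivariant by Theorem \ref{Main Isom}, and the gluing is natural, so the inhomogeneous dilations on $C^*_0(\Heis(V,\omega))$ correspond to the dual action on the source.
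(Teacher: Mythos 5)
Your proposal is correct and follows essentially the same route as the paper: distribute the crossed product over the fibered sum (justified by the triviality of the action on the common quotient), then assemble the two component isomorphisms of Theorem \ref{Main Isom} over $\CCC_0(V^*\setminus\{0\})$ via the universal property of the fibered product. The extra detail you supply (exactness of $-\rtimes\R$, the five lemma, and the boundary compatibility of $\Phi_\pm$) fills in steps the paper leaves implicit but does not change the argument.
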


\begin{proof}
    First, since the actions on the Toeplitz algebras agree on  the respective quotients, we get an isomorphism:
    \[\left(\T(V^{1,0})\bigoplus_{\CCC(\mathbb{S}^*V)}\T(V^{0,1})\right) \rtimes \R \cong \left(\T(V^{1,0})\rtimes \R\right)\bigoplus_{\CCC_0(V^*\setminus \{0\})}\left(\T(V^{0,1})\rtimes  \R \right).\]

    Let us denote by \(\mathbb{T}(V,\omega)\) this algebra. We can project from \(\mathbb{T}(V,\omega)\) on each component. We get the commutative diagram:
    \[\xymatrix{ & \T(V^{1,0})\rtimes \R \ar[r] & I_+(V,\omega) \ar[rd] & \\
                \mathbb{T}(V,\omega) \ar[ur] \ar[dr] & & & \CCC_0(V^*\setminus \{0\}). \\
                 & \T(V^{0,1}) \rtimes\R \ar[r] & I_-(V,\omega) \ar[ur] & }\]
We therefore get the desired isomorphism by property of the fibered product.
\end{proof}

\begin{cor}There is a canonical Morita equivalence 
\[\T(V^{1,0})\bigoplus_{\CCC(\mathbb{S}^*V)}\T(V^{0,1}) \sim C^*_0(\Heis(V,\omega))\rtimes_{\delta} \R^*_+.\]
\end{cor}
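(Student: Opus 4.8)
The plan is to deduce the Morita equivalence from Takai duality combined with the equivariant isomorphism just established. Write $A := \T(V^{1,0})\bigoplus_{\CCC(\mathbb{S}^*V)}\T(V^{0,1})$ and recall from the previous corollary that there is an isomorphism $A \rtimes \R \isomto C^*_0(\Heis(V,\omega))$ which is $\R^*_+$-equivariant, where $\R^*_+$ acts on the source by the dual action $\hat{\alpha}$ (under the identification $\widehat{\R}\cong \R^*_+$ coming from $C^*(\R)\cong \CCC_0(\R^*_+)$) and on the target by the inhomogeneous dilations $\delta$.

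First I would crossed-product both sides of this equivariant isomorphism by $\R^*_+$. Since the isomorphism intertwines $\hat{\alpha}$ and $\delta$, it descends to an isomorphism of the crossed products
\[(A \rtimes_{\alpha} \R)\rtimes_{\hat{\alpha}}\R^*_+ \isomto C^*_0(\Heis(V,\omega))\rtimes_{\delta}\R^*_+.\]
Next I would apply Takai duality to the left-hand side. Because $\R$ is abelian (and amenable, so that reduced and full crossed products agree, as used throughout the paper), Takai duality gives a canonical isomorphism
\[(A\rtimes_{\alpha} \R)\rtimes_{\hat{\alpha}}\widehat{\R}\cong A\otimes \mathcal{K}(L^2(\R)).\]
Combining the two displays yields $C^*_0(\Heis(V,\omega))\rtimes_{\delta}\R^*_+\cong A\otimes \mathcal{K}(L^2(\R))$. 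Finally, since $A\otimes \mathcal{K}(L^2(\R))$ is the stabilization of $A$, it is Morita equivalent to $A$, which gives the claimed equivalence.

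The only genuine point to check --- the step I expect to be the main obstacle --- is the precise matching of the two $\R^*_+$-actions through the identification $\widehat{\R}\cong \R^*_+$: one must verify that the Pontryagin dual action $\hat{\alpha}$ on $A\rtimes \R$, transported by the equivariant isomorphism, is exactly $\delta$, with the normalization of the duality between $\R$ and $\R^*_+$ chosen so that $\Delta_{\pm}^{\ii t/2}$ scales as $\lambda^{\ii t}$ under $\delta_{\lambda}$ (the condition already isolated in the proof of Theorem \ref{Main Isom}). Once that normalization is fixed, the equivariance statement of the main theorem makes the first display legitimate, and the remainder is the formal machinery of Takai duality and stabilization. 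All the identifications produced are canonical, so the resulting Morita equivalence is canonical as asserted.
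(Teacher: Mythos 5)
Your argument is exactly the paper's: apply the $\R^*_+$-equivariant isomorphism from the preceding corollary, take the crossed product by $\R^*_+$, invoke Takai duality to obtain $A\otimes\mathcal{K}(L^2(\R))$, and conclude via stable isomorphism. The one point you flag --- matching the dual action with $\delta$ --- is precisely what the equivariance clause of Theorem \ref{Main Isom} and its corollary already guarantees, so nothing further is needed.
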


\begin{proof}
Using Takai duality we get isomorphisms
\begin{align*}
C^*_0(\Heis(V,\omega))\rtimes_{\delta} \R^*_+ &\cong \left(\T(V^{1,0})\bigoplus_{\CCC(\mathbb{S}^*V)}\T(V^{0,1})\right) \rtimes \R \rtimes \R^*_+ \\
                                              &\cong \left(\T(V^{1,0})\bigoplus_{\CCC(\mathbb{S}^*V)}\T(V^{0,1})\right) \otimes \mathcal{K}(L^2(\R)).
\end{align*}
The last line gives stable isomorphism hence Morita equivalence.
\end{proof}

\begin{rem}
    As mentioned in Remark \ref{Remarque:H ou N+1}, we could have chosen a different operator for the action and the group of multipliers, namely the quantum harmonic oscillator. As a left invariant differential operator on the Heisenberg group, it is defined as \(-\frac{1}{2}\sum_{i = 1}^d X_j^2 + Y_j^2\). On each representation \(\pi_{\lambda}, \lambda \neq 0\) it is mapped to the operator \(|\lambda|(N + \frac{d}{2})\). On the 1-dimensional representation \(\pi_{\mu}, \mu \in V^*\), it is mapped to \(\frac{1}{2}|\mu|^2\).
    As an unbounded multiplier of \(C^*_0(\Heis(V,\omega))\), it is positive. This is slightly better than the \(\Delta_{\pm}\), that do not individually have this property, which will force us to treat each of them on a restricted subset of representations and them glue back the result (see Section \ref{Symbols technicalities}).
    In any case, we need to understand the operators \(\Delta_{\pm}\) to construct the image of the shifts in the multipliers of \(I_{\pm}(V,\omega)\). Therefore, using them for the conjugation as well limits the number of operators to handle at the same time.
\end{rem}

\section{Symbols in the Heisenberg calculus and Weyl operators}\label{Symbols technicalities}

In this section, we give a meaning to the fractional and complex powers of some operators appearing in Section \ref{IsomorphismSection}. This uses Heisenberg pseudodifferential operators. This class of operators was introduced by Folland and Stein \cite{FollandSteinAnnonce,FollandSteinResultat} when studying the \(\partial_b\)-complex on complex manifolds with boundaries. One can construct a Hodge-like laplacian for this complex, it will however fail to be elliptic. The new pseudodifferential calculus allows to bypass this problem and find parametrices and estimates for this operator. It was later generalized in \cite{BealsGreiner} and subsequently by many other authors (e.g. \cite{EpsteinMelrose}, see also \cite{vanErpYuncken} for a more recent approach using the tangent groupoid).

The idea for our operators is that \(\Delta_{\pm}\) are not elliptic. Indeed, they are operators of order 2 and their principal symbol at the identity of the group is then equal to:
\[V^*\times \R \ni (\mu,t)\mapsto \frac{1}{2}|\mu|^2.\]
In particular, the symbol vanishes in the direction conormal to \(V\). To better understand these operators we need to consider \(Z\) as a differential operator of order 2. This makes sense because it is equal to \(X_1Y_1-Y_1X_1\) in the universal envelopping algebra. We thus consider a new filtration on differential operators. This filtration is compatible with the sum and product. Homogeneity now corresponds to the inhomogeneous dilations \((\delta_{\lambda})_{\lambda\in \R^*_+}\). 

To construct parametrices in this calculus, we need to construct a bigger class of operators: the pseudodiffrential operators. For what follows we only need to understand their symbols. The reader interrested in more generalities can look at the monographs \cite{BealsGreiner, EpsteinMelrose} or the survey \cite{Epstein}.

\begin{rem}
    Here we only consider operators on \(\Heis(V,\omega)\) that are left-invariant. In particular the symbols need only to be defined on \(\heis(V,\omega)^*\) instead of the whole cotangent bundle \(T^*\Heis(V,\omega)\). 
    The generalization to any Heisenberg (e.g. contact) manifold would however prove useful to better understand what happens in Section \ref{ContactExtension}. For the sake of simplicity, we limit our exposition to the case of \(\Heis(V,\omega)\). The only thing to keep in mind is that this serves as a local model for any contact manifold and that the operations we do here could be done locally and patched up on a contact manifold.

    We also consider only \emph{principal} symbols. This means that all our functions are homogeneous on the nose (and defined away from \(0\)). One can study a larger class of symbols by taking smooth functions satisfying some estimates at infinity and having an asymptotic expansion in terms of \(\delta\)-homogeneous functions. The principal part of such symbol is then the highest degree part in such expansion.
\end{rem}

\subsection{From Heisenberg symbols to Weyl operators}

A principal symbol of order \(m \in \mathbb{C}\) is a smooth function \(a \in \ \allowbreak \CCC^{\infty}\left(\heis(V,\omega)^*\setminus\{0\}\right)\) which is homogeneous of degree \(m\) for the inhomogeneous dilations \((\delta_{\lambda})_{\lambda>0}\). This means:
\[\forall \lambda>0, \forall (\mu,t)\in (V^*\times \R)\setminus\{0\}, a(\lambda\mu,\lambda^2t) = \lambda^m a(\mu,t).\]
We denote by \(\Sigma^m(\Heis(V,\omega))\) the space space of such functions. One can compose such symbols using the convolution product of the Heisenberg group\footnote{So one needs to consider the inverse Fourier transform of two symbols and take their convolution product and take the Fourier transform of the result. This product is noncommutative.}. Since the dilations are group homomorphisms, the composition of symbols gives product maps:
\[\Sigma^{m_1}(\Heis(V,\omega))\times \Sigma^{m_2}(\Heis(V,\omega))\to \Sigma^{m_1+m_2}(\Heis(V,\omega)), m_1,m_2\in\CC.\]

\begin{ex}
    We can understand \(\Delta_{\pm}\) as operators of order 2. Their principal symbol become respectively:
    \[V^* \times \R \ni (\mu,t) \mapsto \frac{1}{2}|\mu|^2 \pm t \left(1- \frac{d}{2}\right).\]
\end{ex}

Given the homogeneity of the symbols, we can restrict their studies to the following hyperplanes: \(V^*\times \{\pm 1\}\) and \(V^*\times \{0\}\). On the last the product of principal symbols is given by pointwise product. On the two others it is given by the Weyl products \(\#_{\pm}\):
\[\forall f,g \in \CCC^{\infty}(V^*), f\#_{\pm}g(v) := \int_{(x,y)\in V^*}e^{\pm 2\ii \omega^*(x,y)} f(v+x)g(v+y)\diff x \diff y.\]

Here to write the product we used the symplectic form \(\omega^*\) on \(V^*\) obtained by duality. We also have splitted \(V^*\) into complementary lagrangian subspaces (whence the \((x,y)\) coordinates). The product however does not depend on the choice of splitting.

The symbols obtained on \(V^*\times \{\pm 1\}\) behave themselves like operators. To understand that, we split \(V^* = L \oplus L'\) as a pair of complementary lagrangian subspaces. Consider then for a smooth function \(a\in \CCC^{\infty}(V^*)\) (with appropriate decay conditions) the Weyl quantization \cite{TaylorMicrolocal,TaylorHarmonic}:
\begin{align*}
    \op^W(a) \colon &\mathscr{S}(L) \to \mathscr{S}(L) \\
    &f \mapsto \left(x \mapsto \frac{1}{(2\pi)^d}\int\int e^{\ii \omega^*(x-y,\xi)}a\left(\frac{x+y}{2},\xi\right)f(y)\diff y \diff \xi\right)
\end{align*}

This operator makes sense and is continuous for smooth functions \(a\) having a 1-step polyhomogeneous asymptotic expansion in terms homogeneous functions on \(V^*\):
\[a \sim \sum_{k = 0}^{+\infty} a_{m-k}, \ a_{m-k}(\lambda\mu) = \lambda^{m-k} a_{m-k}(\mu), k\geq 0, \lambda>0, \mu\in V^*.\]
For a precise definition this expansion we refer to \cite{Guillemin,TaylorMicrolocal, EpsteinMelrose,Epstein,GorokhovskyvanErp}. The number \(m\) appearing in the expansion is the order of the symbol (or of the corresponding operator). We denote by \(\mathcal{W}^m(V^*,\omega^*)\) the set of such Weyl operators obtained from symbols of order \(m\) using the Weyl quantization. We obtain a pseudodifferential calculus: 
\begin{itemize}
    \item operators in \(\mathcal{W}^z(V^*,\omega^*)\) with \(\Re(z)\leq 0\) are bounded on \(L^2(L)\),
    \item operators in \(\mathcal{W}^z(V^*,\omega^*)\) with \(\Re(z) < 0\) are compact on \(L^2(L)\),
    \item if \(\op^W(a) \in \mathcal{W}^{z_1}(V^*,\omega^*), \op^W(b) \in \mathcal{W}^{z_2}(V^*,\omega^*)\) then we have
    \[\op^W(a)\op^W(b) = \op^W(a\#_+b) \in \mathcal{W}^{z_1+z_2}(V^*,\omega^*),\]
    \item if \(\op^W(a) \in \mathcal{W}^{z}(V^*,\omega^*)\) then we have \(\op^W(a)^T = \op^W(\bar{a}) \in \mathcal{W}^{\bar{z}}(V^*,\omega^*),\)
    \item if \(k\in \mathbb Z\), \(H \in \mathcal{W}^k(V^*,\omega^*)\) is a positive operator then its complex powers defined through functional calculus are also Weyl operators (with some holomorphic dependence, see \cite{Guillemin}):
    \[H^z \in \mathcal{W}^{kz}(V^*,\omega^*).\]
\end{itemize}

So far we have seen that we had maps:
\[\Sigma^m(\Heis(V,\omega))\to \mathcal{W}^m(V^*,\omega^*)\oplus\mathcal{W}^m(V^*,-\omega^*), \ m\in \CC,\]
 compatible with the products on both sides. This map cannot be surjective since the two symbols have to be glued on \(V^* \times \{0\}\subset \heis(V,\omega)^*\). If \(a \in \Sigma^m(\Heis(V,\omega))\) for \(m\in \CC\) then its image is a pair of Weyl symbols \((a_+,a_-) \in \mathcal{W}^m(V^*,\omega^*)\oplus\mathcal{W}^m(V^*,-\omega^*)\) for which we have 2-step polyhomogeneous expansions:

 \[a_+(v) \sim \sum_{k = 0}^{+\infty}a_{m-2k}\| v \|^{m-2j}, a_-(v) \sim \sum_{k = 0}^{+\infty}(-1)^ka_{m-2k}\| v \|^{m-2j}.\]
 Indeed, the two functions \(a_{\pm}\), considered as functions on the inhomogeneous sphere, intersected with \(V^*\times \R^*_{\pm} \) respectively, have to glue back on the equator (i.e. the intersection of the sphere with \(V^*\times \{0\}\)). This condition implies in particular that both Taylor expansions at any point of the equator have to agree. The equality of such Taylor expansions gives the polyhomogeneous expansions above.

 \begin{rem}
     This condition is however not enough. Indeed, the polyhomogeneous expansion above is not enough to recover the symbols \(a_{\pm}\). We can only recover them up to Schwartz functions on \(V^*\).
 \end{rem}

\subsection{\texorpdfstring{\(C^*\)}{C}-algebraic completions}

Since order 0 Weyl operators are bounded on \(L^2(L)\) (\(L\subset V^*\) being a fixed lagrangian subspace as before). We denote by \(W^0(V^*,\omega^*)\subset \mathcal{B}(L^2(L))\) the closure of \(\mathcal{W}^m(V^*,\omega^*)\). This closure does not depend on the choice of \(L\). Given a Weyl operator \(A \in \mathcal{W}^m(V^*,\omega^*)\), we can write it as \(A = \op^W(a)\). Keeping the leading term in the polyhomogeneous expansion of \(a\), we obtain a homogeneous function of order \(m\), i.e. a function on the sphere. This defines maps:
\[\sigma_m\colon \mathcal{W}^m(V^*,\omega^*) \to \CCC^{\infty}(\mathbb{S}^*V),\]
which are compatible with the products (the non-commutativity of symbols only appears with higher order terms).

For \(m = 0\) (or more generally \(\Re(m)\leq 0\)), this map extends continuously to the \(*\)-homomorphism:
\[\sigma_0 \colon W^0(V^*,\omega^*) \to \CCC(\mathbb{S}^*V).\]

For Heisenberg symbols, we can make them act by convolution (the one coming from the group law) on \(\mathscr{S}_0(\Heis(V,\omega))\), the space of smooth functions on \(\heis(V,\omega)^*\setminus \{0\}\) that have a Schwartz-type decay both at infinity and \(0\). This space is a Fréchet space, and for any \(\sigma \in \Sigma^m(\Heis(V,\omega)), m\in \CC\), the map:
\[\sigma \ast \colon \mathscr{S}_0(\Heis(V,\omega)) \to \mathscr{S}_0(\Heis(V,\omega))\]
is continuous.
The space \(\mathscr{S}_0(\Heis(V,\omega))\) can be seen as a subspace of \(C^*_0(\Heis(V,\omega))\) using Fourier transform. It is then a dense subspace.
For \(\Re(m) = 0\), the previous convolution map extends continuously to:
\[\sigma \ast \colon C^*_0(\Heis(V,\omega))\to C^*_0(\Heis(V,\omega)).\]
The same convolution can be considered on the right and we obtain that Heisenberg principal symbols of order \(0\) extend to multipliers of \(C^*_0(\Heis(V,\omega))\). Denote by \(\Sigma(\Heis(V,\omega))\subset M(C^*_0(\Heis(V,\omega)))\) the closure of the space of order \(0\) principal symbols in the multiplier algebra.

\begin{rem}
    Notice that taking \(\Re(m) = 0\) above is optimal. Indeed, if \(\Re(m)> 0\) we have a decay problem at infinity as usual. If \(\Re(m)<0\) however, we get a decay problem near \(0\in \heis(V^*,\omega^*)\).
\end{rem}

The following result is the key point that will help us prove Theorems \ref{Toeplitz as Heisenberg multipliers} and \ref{Family of unitary Heisenberg multipliers}.

\begin{thm}[\cite{EpsteinMelrose}]\label{Heisenberg Equal Two Weyl} The map \(\Sigma^0(\Heis(V,\omega)) \to \mathcal{W}^0(V^*,\omega^*) \oplus \mathcal{W}^0(V^*,-\omega^*)\) extends to an isomorphism of \(C^*\)-algebras:
  \[\Sigma(\Heis(V,\omega)) \to W^0(V^*,\omega^*) \bigoplus_{\CCC(\mathbb{S}V^*)} W^0(V^*,-\omega^*).\]
\end{thm}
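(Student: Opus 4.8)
The plan is to exhibit the asserted map as a morphism of short exact sequences and conclude by the five lemma, the only genuinely analytic input being the identification of the ideals. First I would record the Weyl pseudodifferential exact sequence on each side. Using the symbol map $\sigma_0$, together with the facts that negative-order operators are compact and dense among themselves, so that $\mathcal{K}(L^2(L))$ is exactly the kernel of $\sigma_0$, one obtains
\[0 \to \mathcal{K} \to W^0(V^*,\pm\omega^*) \xrightarrow{\sigma_0} \CCC(\mathbb{S}^*V) \to 0,\]
where surjectivity holds because every smooth function on the cosphere is the leading symbol of some order-$0$ Weyl symbol. Forming the fibered product over the common quotient then yields
\[0 \to \mathcal{K}\oplus\mathcal{K} \to W^0(V^*,\omega^*)\bigoplus_{\CCC(\mathbb{S}^*V)}W^0(V^*,-\omega^*) \to \CCC(\mathbb{S}^*V) \to 0,\]
the ideal being the pairs of operators with vanishing symbol.

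On the Heisenberg side I would use that evaluation of a principal symbol on the equator $V^*\times\{0\}$ gives a degree-$0$ homogeneous function, i.e. an element $\rho(\sigma)\in\CCC(\mathbb{S}^*V)$; since the leading symbol is multiplicative, $\rho$ extends to a surjective $*$-homomorphism $\rho\colon \Sigma(\Heis(V,\omega))\to\CCC(\mathbb{S}^*V)$ whose kernel $J$ is the closure of the symbols vanishing on the equator. Next I would verify that the map of the statement lands in the fibered product: this is precisely the gluing condition of the $2$-step polyhomogeneous expansion established above, which forces the two Weyl operators $\op^W(a_\pm)$ to have the same leading symbol on the cosphere, namely $\rho(\sigma)$. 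Writing $p_\pm$ for the two restriction maps, we thus get $\sigma_0\circ p_+ = \sigma_0\circ p_- = \rho$, so the square over $\CCC(\mathbb{S}^*V)$ commutes with the identity on the quotient.

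The heart of the argument is the identification $J\xrightarrow{\sim}\mathcal{K}\oplus\mathcal{K}$. For injectivity: if $\sigma\in J$ maps to $(0,0)$, then by homogeneity $\pi_\lambda(\sigma)$ equals, up to rescaling, the single Weyl operator $\op^W(a_+)=0$ for all $\lambda>0$ and $\op^W(a_-)=0$ for all $\lambda<0$, while $\pi_0(\sigma)=\rho(\sigma)=0$; since the family $(\pi_\lambda)_{\lambda\in\R}$ is faithful on $M(C^*_0(\Heis(V,\omega)))$ through the continuous field structure, $\sigma=0$. For surjectivity I would exploit the decoupling noted in the remark above: a principal symbol is recoverable from its $2$-step expansion only up to a pair of Schwartz functions on $V^*$, one on each hyperplane, and conversely any pair $(b_+,b_-)\in\mathscr{S}(V^*)^2$ is realized. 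Concretely, a Schwartz function on $V^*\times\{1\}$ corresponds under homogeneity to a smooth degree-$0$ function on the upper hemisphere vanishing to infinite order at the equator, and two such functions on the upper and lower hemispheres glue to a smooth $\sigma\in\Sigma^0(\Heis(V,\omega))\cap J$ with prescribed $a_\pm=b_\pm$. Since $\op^W$ of the Schwartz class is dense in $\mathcal{K}$, the pairs of smoothing operators are dense in $\mathcal{K}\oplus\mathcal{K}$; as the image of a $C^*$-morphism is closed, $J\to\mathcal{K}\oplus\mathcal{K}$ is onto, hence an isomorphism.

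Finally, I would assemble the commutative diagram of the two short exact sequences, with the identity on the quotient $\CCC(\mathbb{S}^*V)$ and the isomorphism $J\cong\mathcal{K}\oplus\mathcal{K}$ on the ideal, and invoke the five lemma to deduce that the map $\Sigma(\Heis(V,\omega))\to W^0(V^*,\omega^*)\bigoplus_{\CCC(\mathbb{S}^*V)}W^0(V^*,-\omega^*)$ is an isomorphism of $C^*$-algebras. The main obstacle is exactly the surjectivity onto $\mathcal{K}\oplus\mathcal{K}$: building a single homogeneous Heisenberg symbol that simultaneously and \emph{independently} realizes prescribed compact operators on the two hyperplanes. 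This is where the infinite-order decoupling at the equator, the analytic content supplied by \cite{EpsteinMelrose}, is indispensable.
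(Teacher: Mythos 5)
The paper does not prove this statement: it is quoted from Epstein--Melrose, so there is no internal argument to compare yours against. Taken on its own terms, your reconstruction is structurally sound and is the natural way to organize such a proof: two short exact sequences, the identity on the common quotient $\CCC(\mathbb{S}^*V)$, an isomorphism on the ideals, and the five lemma. The constructive step you highlight is correct: a Schwartz function $b_{\pm}$ on $V^*\times\{\pm 1\}$ extends by $\delta$-homogeneity to a smooth degree-$0$ function on $\heis(V,\omega)^*\setminus\{0\}$ supported in one open half-space and vanishing to infinite order at the equator, so arbitrary pairs of smoothing operators are realized independently, and density of $\op^W(\mathscr{S}(V^*))$ in $\mathcal{K}$ together with closedness of the range of a $*$-homomorphism gives surjectivity onto $\mathcal{K}\oplus\mathcal{K}$. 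Two points deserve more care than you give them. First, your one-line justification of the Weyl exact sequence only yields the inclusion $\mathcal{K}\subseteq\ker\sigma_0$ in $W^0(V^*,\pm\omega^*)$; the reverse inclusion is the essential-norm estimate for the closed order-$0$ isotropic calculus, which is a genuine (if standard) analytic input and is at least as much the content of the cited reference as the infinite-order decoupling you single out at the end --- the decoupling itself is elementary once the homogeneous extension above is written down. Second, your injectivity argument tacitly uses that the restriction maps are norm-decreasing (which is also what makes the extension to the $C^*$-closure exist in the first place, via $\|\sigma\|_{M(C^*_0(\Heis(V,\omega)))}=\sup_{\pi}\|\pi(\sigma)\|$) and that a faithful family of nondegenerate representations of $C^*_0(\Heis(V,\omega))$ remains faithful on its multiplier algebra; both are true and worth a sentence each. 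Neither point is a gap in the logic, only in the level of detail.
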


\begin{cor}If we take \((Q_+,Q_-) \in W^0(V^*,\omega^*) \bigoplus_{\CCC(\mathbb{S}V^*)} W^0(V^*,-\omega^*)\) and \(f = (f_t)_{t\in\R} \in C^*_0(\Heis(V,\omega))\). Then, the sections
  \((Q_{\pm}f_{\pm t})_{t>0}\) extend continuously at \(t = 0\) to elements of \(I_{\pm}(V,\omega)\) respectively. Their extension at \(t=0\) coincide and are equal to \(\sigma^0(Q_{\pm})f_0\) (here the product is the commutative product of functions).
\end{cor}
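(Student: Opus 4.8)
The plan is to package $(Q_+,Q_-)$ into a single multiplier of $C^*_0(\Heis(V,\omega))$ and then let the continuous-field structure do the work. First I would apply Theorem \ref{Heisenberg Equal Two Weyl}: the datum $(Q_+,Q_-)$ is, by definition, an element of the fibered product $W^0(V^*,\omega^*)\bigoplus_{\CCC(\mathbb{S}V^*)}W^0(V^*,-\omega^*)$, so the inverse of the isomorphism produces a unique $Q\in\Sigma(\Heis(V,\omega))\subset M(C^*_0(\Heis(V,\omega)))$ mapping to it. Since $Q$ is a multiplier and $f\in C^*_0(\Heis(V,\omega))$, the product $Qf$ again lies in $C^*_0(\Heis(V,\omega))$; as $C^*_0(\Heis(V,\omega))$ is a continuous field over $\R$, $Qf$ is automatically a continuous section. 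This single section will simultaneously furnish both extensions, which is what will force their values at $t=0$ to coincide.

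Next I would read off the fibers of $Qf$. For $t>0$ the fiber is $\mathcal{K}(\F^+(V^{1,0}))$, and under the trivialization of the field over $\R^*_+$ the image of a degree-$0$ principal symbol under the evaluation $\pi_t$ is the Weyl operator obtained by restricting the symbol to the hyperplane $V^*\times\{1\}$. Because the symbol is homogeneous of degree $0$ for the dilations $\delta_\lambda$, this image does not depend on $t$ and is exactly $Q_+$; hence $(Qf)_t=Q_+f_t$ for $t>0$. Running the same argument on the negative side, where the fiber is $\mathcal{K}(\F^+(V^{0,1}))$ and the relevant Weyl algebra is $W^0(V^*,-\omega^*)$, and reparametrizing by $t\mapsto -t$, gives the section $(Q_-f_{-t})_{t>0}$. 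Restricting the continuous section $Qf$ to $\R_+$ (respectively to $\R_-$) therefore produces an element of $I_+(V,\omega)$ (respectively $I_-(V,\omega)$) extending $(Q_\pm f_{\pm t})_{t>0}$, the continuity at $t=0$ being granted for free by continuity of $Qf$ as a section of the whole field.

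It then remains to compute the common boundary value. At $t=0$ the fiber is $\CCC_0(V^*\setminus\{0\})$, the corresponding representations being the characters $\pi_\mu$, $\mu\in V^*\setminus\{0\}$, and $\pi_\mu(Q)$ is the scalar obtained by evaluating the symbol at the point $(\mu,0)\in\heis(V,\omega)^*$; by degree-$0$ homogeneity this scalar depends only on $\mu/|\mu|$ and equals the value there of the principal symbol. Thus $\pi_0(Q)$ is multiplication by the radial extension of $\sigma^0(Q_+)\in\CCC(\mathbb{S}^*V)$, and $(Qf)_0=\sigma^0(Q_+)f_0$. Since $Qf$ is one and the same section, the extensions arising from $I_+(V,\omega)$ and from $I_-(V,\omega)$ both equal $(Qf)_0$; and because $(Q_+,Q_-)$ lies in the fibered product we have $\sigma^0(Q_+)=\sigma^0(Q_-)$ on the equator, so the two agree, proving the corollary.

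I expect the only real difficulty to be the identification used in the second and third paragraphs, namely that the evaluation $\pi_t(Q)$ is the Weyl operator $Q_\pm$ for $\pm t>0$ and degenerates to multiplication by $\sigma^0(Q_\pm)$ at $t=0$. This is precisely the dictionary between the Heisenberg symbol calculus and the Weyl calculus in each irreducible representation, together with the equatorial matching of the two Weyl symbols; it is exactly what Theorem \ref{Heisenberg Equal Two Weyl} encodes, so once that identification is granted the corollary follows formally from the continuous-field structure of $C^*_0(\Heis(V,\omega))$.
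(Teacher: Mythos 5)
Your proposal is correct and follows exactly the route the paper intends: the paper treats this corollary as an immediate consequence of Theorem \ref{Heisenberg Equal Two Weyl}, and its proof of the subsequent corollary uses precisely your device of pulling the pair back to a single multiplier in $\Sigma(\Heis(V,\omega))\subset M(C^*_0(\Heis(V,\omega)))$ and reading off the fibers of the resulting continuous section. The only point you flag as a difficulty---that $\pi_t$ of a degree-$0$ symbol is the Weyl operator for $\pm t>0$ and multiplication by the principal symbol at $t=0$---is indeed exactly the dictionary the theorem packages, so nothing is missing.
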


\begin{cor}
    If \(Q\in W^0(V^*,\omega^*)\) and \(f = (f_t)_{t\geq 0} \in I_+(V,\omega)\) then \((Qf_t)_{t>0}\) extends to a unique element of \(I_+(V,\omega))\) with value at 0 equal to \(\sigma_0(Q)f_0\). A similar statement holds for \(W^0(V^*,-\omega^*)\) and \(I_-(V,\omega)\). Similar results hold for operators of order \(\ii t, t\in \R\).
    
    These results define *-homomorphisms \(W^0(V^*,\pm\omega^*)\to M\left(I_{\pm}(V,\omega)\right)\).
\end{cor}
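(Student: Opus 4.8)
The plan is to deduce this single-sided statement from the previous corollary by lifting both pieces of data to the full Heisenberg algebra. Fix \(Q \in W^0(V^*,\omega^*)\). Since the symbol map \(\sigma_0 \colon W^0(V^*,-\omega^*) \to \CCC(\mathbb{S}^*V)\) is surjective, we may pick \(Q_- \in W^0(V^*,-\omega^*)\) with \(\sigma_0(Q_-) = \sigma_0(Q)\); the pair \((Q,Q_-)\) then lies in the fibered sum of Theorem \ref{Heisenberg Equal Two Weyl}, hence defines an element of \(\Sigma(\Heis(V,\omega)) \subset M(C^*_0(\Heis(V,\omega)))\). Likewise, as the restriction-of-sections map \(C^*_0(\Heis(V,\omega)) \surj I_+(V,\omega)\) is surjective, every \(f \in I_+(V,\omega)\) lifts to some \(\tilde f \in C^*_0(\Heis(V,\omega))\) with \(\tilde f_t = f_t\) for all \(t \geq 0\). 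Applying the previous corollary to \((Q,Q_-)\) and \(\tilde f\) shows that \((Q\tilde f_t)_{t>0}\) extends continuously to an element of \(I_+(V,\omega)\) with boundary value \(\sigma_0(Q)\tilde f_0\). Since \(\tilde f_t = f_t\) for \(t \geq 0\), both \((Q\tilde f_t)_{t>0} = (Qf_t)_{t>0}\) and the boundary value \(\sigma_0(Q)f_0\) depend only on \(f\), not on the lift \(\tilde f\) nor on the auxiliary \(Q_-\) (which governs only the discarded negative half-line). Uniqueness is automatic: an element of \(I_+(V,\omega)\) is a continuous section over \([0,+\infty)\), so it is determined by its values at \(t > 0\) together with continuity at \(0\).

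To obtain the \(*\)-homomorphism \(W^0(V^*,\omega^*) \to M(I_+(V,\omega))\), one runs the same construction on the right, setting \(L_Q(f) = (Qf_t)_t\) and \(R_Q(f) = (f_tQ)_t\). For each fixed \(t>0\) these are ordinary left and right multiplication on the fibre \(\mathcal{K}(\F^+(V^{1,0}))\), so the double-centralizer identities \(L_Q(fg) = L_Q(f)g\), \(R_Q(fg) = fR_Q(g)\), and \(R_Q(f)g = fL_Q(g)\) hold fibrewise for \(t>0\) and extend to \(t=0\) by continuity; hence \((L_Q,R_Q) \in M(I_+(V,\omega))\). Linearity in \(Q\) is clear, the estimate \(\|Qf_t\| \leq \|Q\|\,\|f_t\|\) (uniform in \(t\)) gives boundedness, multiplicativity \(L_{Q_1Q_2} = L_{Q_1}L_{Q_2}\) comes from \((Q_1Q_2)f_t = Q_1(Q_2f_t)\) fibrewise, and the \(*\)-compatibility comes from the involution being applied fibrewise in \(\mathcal{B}(L^2(L))\). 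The case of \(W^0(V^*,-\omega^*) \to M(I_-(V,\omega))\) is symmetric.

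For operators of order \(\ii t\) the argument is unchanged, because the previous corollary (and Theorem \ref{Heisenberg Equal Two Weyl}) only use \(\Re(m) \leq 0\): purely imaginary order symbols are still bounded Weyl operators and still yield multipliers of \(C^*_0(\Heis(V,\omega))\), the value \(\Re(m) = 0\) being precisely the borderline permitted by the calculus. The only modification is at the boundary fibre, where the limiting symbol is homogeneous of degree \(\ii t\) rather than \(0\); it is nonetheless a bounded continuous (modulus-one) function on \(V^*\setminus\{0\}\), hence a multiplier of \(\CCC_0(V^*\setminus\{0\})\). I expect no genuine obstacle inside this proof: the reduction is bookkeeping, and all the substantive analytic content — the existence of a continuous limit at \(t=0\), coming from the compatible gluing of the two Weyl calculi along the equator \(V^*\times\{0\}\) — is already contained in the previous corollary and in Theorem \ref{Heisenberg Equal Two Weyl}, which we may assume.
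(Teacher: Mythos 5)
Your proposal is correct and follows essentially the same route as the paper: the paper's (one-line) proof also lifts $Q$ to a pair in the fibered sum $W^0(V^*,\omega^*)\bigoplus_{\CCC(\mathbb{S}^*V)}W^0(V^*,-\omega^*)\cong\Sigma(\Heis(V,\omega))$ (it simply takes $(Q,Q)$, i.e.\ the same symbol quantized for both signs, rather than an arbitrary $Q_-$ with matching principal symbol) and then restricts the resulting multiplier of $C^*_0(\Heis(V,\omega))$ to the quotient $I_+(V,\omega)$. Your additional verifications — independence of the lift, the double-centralizer identities, and the order-$\ii t$ case — are details the paper leaves implicit, not a different argument.
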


\begin{proof}
    We can just consider \((Q,Q) \in \Sigma^0(\Heis(V,\omega))\) and the aforementioned map \(\Sigma^0(\Heis(V,\omega)) \to M(C^*_0(\Heis(V,\omega)))\). One can then see that this action restricts to \(I_+(V,\omega)\).
\end{proof}

\subsection{The shifts as Heisenberg operators}

We now prove Theorems \ref{Toeplitz as Heisenberg multipliers} and \ref{Family of unitary Heisenberg multipliers}. 
The proof goes as follows: we realize the operators mentioned in both theorems as symbols in the Heisenberg calculus, of order \(0\) for the images of  d-shifts, or order \(\ii t\) for the unitary multipliers. 
Using the continuity of such operators on \(C^*_0(\Heis(V,\omega))\) we get the result. 

Once again we fix a compatible complex structure on \((V,\omega)\) and fix an orthonormal basis \(W_1,\cdots,W_d \in V^{1,0}\). From this we get the left-invariant differential operators \(\Delta_{\pm}\) on \(\Heis(V,\omega)\). Write \(\Delta_{+,+},\Delta_{-,+} \in \mathcal{W}^2(V^*,\omega^*)\) their restrictions on the upper hemisphere and \(\Delta_{+,-},\Delta_{-,-}\) on the lower hemisphere. 
The operators \(\Delta_{+,+},\Delta_{-,-}\) are elliptic, their principal symbol is \(\mu \mapsto \frac{1}{2}|\mu|^2\).
As operators they are positive (so more than elliptic, they are even invertible). Indeed by construction, they are both mapped to the number operator plus 1 in the Bargmann-Fock representation. We can thus consider their complex powers. 

On the one hand, we get order \(-1\) Weyl operators \(\Delta_{+,+}^{-1/2}, \Delta_{-,-}^{-1/2}\). We then get the operators \(-\ii W_j\Delta_{+,+}^{-1/2} \in \mathcal{W}^0(V^*,\omega^*)\) and \(-\ii \overline{W_j}\Delta_{-,-}^{-1/2} \in \mathcal{W}^0(V^*,-\omega^*)\). These act continuously on \(I_+(V,\omega)\) and \(I_-(V,\omega)\) as multipliers. This proves Theorem \ref{Toeplitz as Heisenberg multipliers}.

On the other hand, we can take the complex powers \(\Delta_{+,+}^{\ii t /2}\) and \(\Delta_{-,-}^{\ii t /2}\). They act as a continuous unitary multipliers of \(C^*_0(\Heis(V,\omega))\). This proves Theorem \ref{Family of unitary Heisenberg multipliers}.

These considerations allow us to reinterpret some previous results. 

\begin{prop}
    Let \((V,\omega)\) be a symplectic vector space with a compatible complex structure. We have natural isomorphisms of \(C^*\)-algebras:
    \begin{align*}
        \T(V^{1,0}) &\isomto W^0(V^*,\omega^*) \\
        \T(V^{0,1}) &\isomto W^0(V^*,-\omega^*).
    \end{align*}
    Combining the two we obtain:
        \[\T(V^{1,0}) \bigoplus_{\CCC_0(\mathbb{S}^*V)} \T(V^{0,1}) \isomto \Sigma(\Heis(V,\omega)).\]
    
\end{prop}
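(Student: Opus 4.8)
The plan is to prove the two displayed isomorphisms separately and then assemble them; I treat $\T(V^{1,0}) \isomto W^0(V^*,\omega^*)$, the case of $V^{0,1}$ and $-\omega^*$ being identical after replacing $\omega$ by $-\omega$. First I record the symbol exact sequence for the Weyl algebra. From the calculus properties listed above (operators of order $<0$ are compact, $\sigma_0$ is a surjective $*$-homomorphism onto $\CCC(\mathbb{S}^*V)$, an order-$0$ operator with vanishing principal symbol has order $<0$, and smoothing Weyl operators are dense in the compacts) one obtains
\[0 \to \mathcal{K}(L^2(L)) \to W^0(V^*,\omega^*) \xrightarrow{\sigma_0} \CCC(\mathbb{S}^*V) \to 0,\]
which is to be compared with the Toeplitz sequence of Proposition \ref{ToepExactSeq}.

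The central device is the multiplier map $\rho_+ \colon W^0(V^*,\omega^*) \to M(I_+(V,\omega))$ of the preceding corollary, and the key point is that it is \emph{isometric}. Indeed, $I_+(V,\omega)$ is a continuous field over $\R_+$, so the norm of a multiplier is the supremum of its fibrewise norms. On the fiber at $\lambda = 0$, $\rho_+(Q)$ acts by multiplication by $\sigma_0(Q)$, contributing $\|\sigma_0(Q)\|_\infty \leq \|Q\|$. Since $Q$ has order $0$ its symbol is $\delta$-invariant, so $\rho_+(Q)$ commutes with the inhomogeneous dilations; these intertwine all the fibers over $\R^*_+$, whence the fiber norms there are all equal to the norm at $\lambda = 1$. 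The latter is the realization of $Q$ in the Fock model $\F^+(V^{1,0})$, which by the Bargmann transform implementing the unitary equivalence of the Fock and Schrödinger models (Stone--von Neumann) is unitarily equivalent to the defining action of $Q$ on $L^2(L)$. Hence $\|\rho_+(Q)\| = \|Q\|_{\mathcal{B}(L^2(L))}$ and $\rho_+$ is injective.

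I then set $\psi := \rho_+^{-1} \circ \varphi$, where $\varphi$ is the isometric $*$-homomorphism constructed in the proof of Theorem \ref{Main Isom}. This is legitimate because $\varphi(S_j) = -\ii W_j \Delta_+^{-1/2} = \rho_+\!\left(-\ii W_j \Delta_{+,+}^{-1/2}\right)$: both are the multiplier of $I_+(V,\omega)$ induced by the single Heisenberg symbol $-\ii W_j \Delta_+^{-1/2}$, which acts as the shift $S_j$ on every positive Fock fiber (the computation $\pi_\lambda(-\ii W_j \Delta_+^{-1/2}) = A_j^*(N+1)^{-1/2} = S_j$) and by $\tilde{W_j}$ at $\lambda = 0$. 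Thus $\im(\varphi) \subseteq \im(\rho_+)$ and $\psi$ is a well-defined injective $*$-homomorphism with $\psi(S_j) = -\ii W_j \Delta_{+,+}^{-1/2}$. The principal symbol of this image is the degree-$0$ function obtained from the form dual to $W_j$ divided by $|\mu|$, which under $\mathbb{S}^*V \cong \mathbb{S}^*V^{1,0}$ is exactly the function $\tilde{W_j}$ of Remark \ref{QuotientToeplitz}; hence $\sigma_0 \circ \psi$ agrees with the Toeplitz quotient map on generators, so the right-hand square of the two exact sequences commutes with the identity of $\CCC(\mathbb{S}^*V)$. Consequently $\psi$ carries $\ker = \mathcal{K}(\F^+(V^{1,0}))$ into $\ker\sigma_0 = \mathcal{K}(L^2(L))$, and the resulting injective map of these elementary algebras is, once more by the Bargmann identification, the isomorphism of compacts induced by the underlying unitary; in particular it is onto. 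The five lemma then gives that $\psi$ is an isomorphism.

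For the combined statement, the same construction yields $\T(V^{0,1}) \isomto W^0(V^*,-\omega^*)$, and both isomorphisms intertwine the symbol maps onto the common quotient $\CCC(\mathbb{S}^*V) \cong \CCC(\mathbb{S}V^*)$; they therefore descend to an isomorphism of the fibered products, which composed with the isomorphism of Theorem \ref{Heisenberg Equal Two Weyl} produces
\[\T(V^{1,0}) \bigoplus_{\CCC_0(\mathbb{S}^*V)} \T(V^{0,1}) \isomto \Sigma(\Heis(V,\omega)).\]
I expect the main obstacle to be the isometry of $\rho_+$, together with the identification of the induced map on compacts as an isomorphism: both reduce to the dilation-covariant Stone--von Neumann/Bargmann comparison of the Weyl--Schrödinger model on $L^2(L)$ with the Fock model on the positive fibers of $I_+(V,\omega)$. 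Everything else is the five lemma together with routine principal-symbol bookkeeping.
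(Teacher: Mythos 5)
Your proof is correct and follows essentially the same route as the paper: send $S_j$ to the Weyl operator $-\ii W_j\Delta_{+,+}^{-1/2}$, use the fiber-at-$\lambda=1$ (Bargmann--Fock) identification to see the map is isometric, and conclude by comparing the two exact sequences via the five lemma. Your detour through $\rho_+^{-1}\circ\varphi$ is only a repackaging of this, though you are usefully more explicit than the paper about why the induced injection of the compact ideals is surjective.
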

\begin{proof}
    The map \(\T(V^{1,0}) \isomto W^0(V^*,\omega^*)\) is constructed by mapping the shifts \(S_j\) to \(-\ii W_j\Delta_{+,+}^{-1/2}\) for \(1\leq j \leq d\). This map extends to polynomial expressions in the \(S_j, S_k^*, 1\leq j,k\leq d\) and is an isometry. Indeed if we compose the map \(W^0(V^*,\omega^*)\) with the representation corresponding to the fiber at \(t=1\) of \(I_+(V,\omega)\) we see that \(S_j\) is mapped to itself. Therefore the map defined on polynomials in the \(S_j, S_k^*\) extends to a *-homomorphism.
    
    Now the morphism preserves the respective exact sequences. This means that the map \(\T(V^{1,0}) \to \CCC(\mathbb{S}^*{V^{1,0}})\) corresponds to \(\sigma_0\) for \(W^0(V^*,\omega^*)\). This is under the identification of complex vector spaces \((V,J) \cong V^{1,0}\) yielding \(\mathbb{S}^*{V^{1,0}} \cong \mathbb{S}^*V\). If we look at the map between the respective exact sequences, we have an isomorphism on the ideal and the quotient, thus in the middle as well.

    The same goes for \(\T(V^{0,1})\) with the small adjustment that now \(V^{0,1} \cong (V,-J) = \overline{(V,J)}\).

    Using these identifications we can build the algebra 
    \[\T(V^{1,0}) \bigoplus_{\CCC_0(\mathbb{S}^*V)} \T(V^{0,1}).\] 
    Using the two previous morphisms, this algebra is isomorphic to 
    \[W^0(V^*,\omega^*) \bigoplus_{\CCC(\mathbb{S}V^*)} W^0(V^*,-\omega^*),\] 
    hence to \(\Sigma(\Heis(V,\omega))\) using Theorem \ref{Heisenberg Equal Two Weyl}.
\end{proof}

We can give a non-commutative geometric interpretation of this result. The Toeplitz algebra \(\T(V^{1,0})\) can be seen as a quantized/non-commutative analog of the closed unit ball in \(V^{1,0}\). Indeed, its spectrum is a quotient of the that ball, where all the points in the open ball are all equivalent and any point on the boundary remains untouched.

Similarly, we can see \(\Sigma(\Heis(V,\omega))\) as a non-commutative analog of the sphere in \(\Heis(V,\omega)\) for the inhomogeneous dilations. 

The isomorphism \(\T(V^{1,0}) \bigoplus_{\CCC_0(\mathbb{S}^*V)} \T(V^{0,1}) \isomto \Sigma(\Heis(V,\omega))\) is then the non-commutative analog of the fact that a sphere can be obtained by gluing two closed balls along their boundary.

The isomorphism \(\left(\T(V^{1,0})\bigoplus_{\CCC(\mathbb{S}^*V)}\T(V^{0,1})\right) \rtimes \R \to C^*_0(\Heis(V,\omega))\) can then be seen as the non-commutative analog of the fact that taking the quotient by dilations of a vector space minus its 0 element gives the sphere of that space.

\subsection{A more general result}

Theorem \ref{Main Isom} is actually a reformulation, using Toeplitz operators, of a more general result on graded nilpotent groups.

A graded (nilpotent) Lie algebra is a (real or complex) Lie algebra \(\g\) that decomposes as a finite sum of vector subspaces \(\g = \g_1 \oplus \cdots \oplus \g_r\) with the property:
\[\forall 1\leq i,j \leq r, [\g_i,\g_j] \subset \g_{i+j},\]
with the convention \(\g_j = 0\) for any \(j > r\). Such a Lie algebra is automatically nilpotent (with step at most \(r\)).

A graded Lie group is a Lie group whose Lie algebra is graded. Here we will consider \(G\), a connected, simply connected, graded Lie group.

\begin{ex}
    Abelian groups are graded with \(\g_1 = \g, r = 1\).

    The Heisenberg group \(\Heis(V,\omega)\) is graded with \(\g_1 = V, \g_2 = \R, r = 2\).
\end{ex}

On the Lie algebra, we can define a family of inhomegeneous dilations \((\delta_{\lambda})_{\lambda > 0}\) by taking \(\lambda^j\Id\) on \(\g_j\). These maps preserve the Lie algebra structure so we can lift them to group homomorphisms.

We can use these functions to define spaces of homogeneous distributions \(\Sigma^m(G),\) \(m\in \CC\) as we did for the Heisenberg group. Since the dilations preserve the group structure, they are compatible with the group convolution and thus we get composition maps:
\[\Sigma^m(G) \times \Sigma^n(G) \to \Sigma^{m+n}(G), m,n\in \CC.\]

Elements of the universal envelopping algebra \(\mathcal{U}(\g)\) correspond to the differential operators for that calculus (i.e. certain distributions of integer order). We can endow this algebra with a grading using the one on \(\g\). The algebras \(\Sigma^m(G)\) only involve homogeneous elements so these correspond to elements in \(\mathcal{U}(\g)\) of pure degree.

These spaces of distributions satisfy similar properties as the ones described for \(G = \Heis(V,\omega)\), see \cite{CGGP}.

Most importantly, elements of \(\Sigma^{\ii t}(G), t\in \R\) act as multipliers of \(C^*_0(G)\). In particular we can consider the \(C^*\)-algebra \(\Sigma(G)\) obtained by taking the closure of order \(0\) symbols in the multiplier algebra. We can then phrase a generalization of Theorem \ref{Main Isom}.

\begin{thm}[\cite{Ewert,Cren}]
    Let \(G\) be a connected, simply connected, graded group. Let \(\Delta \in \mathcal{U}(\g)\) be an element of pure degree seen as a positive element in \(\Sigma^m(G)\) for some \(m\in \N\). 
    Consider the conjugation action by the \(\Delta^{\ii t/m}, t\in \R\) on \(\Sigma(G)\). Then there is an isomorphism of \(C^*\)-algebras:
    \[\Sigma(G)\rtimes \R \isomto C^*_0(G).\]
    This isomorphism is equivariant for the dual action on the left hand side and the action of the inhomogeneous dilations on the right hand side.
\end{thm}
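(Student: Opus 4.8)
The plan is to prove this in exactly the same spirit as Theorem \ref{Main Isom}, which is the case $G = \Heis(V,\omega)$ with $\Delta = \Delta_+$ (and its mirror $\Delta_-$): the special structure of the Heisenberg group forced us to split $C^*_0(\Heis)$ into $I_\pm$ and glue, but the \emph{mechanism} of the isomorphism is entirely general, so I would reproduce it directly on $C^*_0(G)$. The key structural input is that $C^*_0(G)$ is the kernel of the trivial representation inside $C^*(G)$, and that it carries a free action $\delta$ of $\R^*_+$ by inhomogeneous dilations. First I would construct the $*$-homomorphism at the algebra level: send $\Sigma(G)$ into $M(C^*_0(G))$ via convolution (this is exactly the map $\sigma \ast$ discussed in Section \ref{Symbols technicalities}, which extends to a multiplier for $\Re(m) = 0$), and send $t \in \R$ to the unitary multiplier $\Delta^{\ii t/m} \in M_u(C^*_0(G))$, whose existence is the content of the generalities of \cite{CGGP} on complex powers of positive elements of pure degree. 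The conjugation relation $\Ad(\Delta^{\ii t/m})\circ (\sigma\ast) = (\sigma\ast)\circ \alpha_t$, where $\alpha_t = \Ad(\Delta^{\ii t/m})$ is the given action on $\Sigma(G)$, is then automatic, and by the universal property of the crossed product it assembles into a $*$-homomorphism $\Phi \colon \Sigma(G)\rtimes \R \to C^*_0(G)$.

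The next step is to show $\Phi$ is an isomorphism, and here I would again lean on a five-lemma argument across a commutative diagram of exact sequences, exactly as in the proof of Theorem \ref{Main Isom}. The relevant exact sequence for $C^*_0(G)$ is the restriction-to-the-$0$-fiber sequence coming from the continuous-field structure of $C^*(G)$ over the homogeneous half-line of nonzero dilation parameters, with quotient $\CCC_0$ of the (co)sphere minus the origin and ideal the sections vanishing at $0$; the matching sequence for $\Sigma(G)\rtimes \R$ comes from the principal-symbol sequence $0 \to \mathcal{K} \to \Sigma(G) \to \CCC(\mathbb{S}^*\g) \to 0$ crossed with the trivial $\R$-action on the quotient (which, as in the Toeplitz case, turns $\CCC(\mathbb{S}^*\g)\rtimes\R$ into $\CCC_0$ of the punctured space via Pontryagin duality). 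Once I verify that $\Phi$ induces the identity on ideals and on quotients — the ideal map being the triviality-of-the-crossed-product computation $\mathcal{K}\rtimes_\alpha \R \cong \mathcal{K}\otimes \CCC_0(\R^*_+)$, and the quotient map being the commutative-product statement on the $0$-fiber established in the corollaries of Section \ref{Symbols technicalities} — the five lemma gives that $\Phi$ is an isomorphism.

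The equivariance claim I would handle last and essentially by inspection, as in Theorem \ref{Main Isom}: the order-$0$ symbols in $\Sigma(G)$ are $\delta$-homogeneous of degree $0$, hence fixed by the dilations, while the normalization $\delta_\lambda(\Delta^{\ii t/m}) = \lambda^{\ii t}$ — which is precisely why one conjugates by $\Delta^{\ii t/m}$ rather than $\Delta^{\ii t}$ — intertwines the dual $\R^*_+$-action on the $\R$-variable of the crossed product with the dilation action on $C^*_0(G)$.

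The main obstacle, and the only genuinely technical point, is the same one flagged as the hard part throughout this paper: showing that the convolution action of an order-$0$ Heisenberg (here: graded) symbol, and the complex powers $\Delta^{\ii t/m}$, actually extend \emph{continuously to the} $0$-\emph{fiber}, i.e.\ that they define honest multipliers of $C^*_0(G)$ rather than merely fiberwise operators. For $G = \Heis(V,\omega)$ this was the content of Theorem \ref{Heisenberg Equal Two Weyl} and its corollaries, proved via the Weyl-operator realization of the two hemispheres and the gluing on the equator; in the general graded setting the analogous continuity and the existence of complex powers of positive pure-degree elements are exactly what is imported from \cite{CGGP}, \cite{Ewert} and \cite{Cren}. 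Granting those analytic facts about the graded pseudodifferential calculus, the $C^*$-algebraic argument above is purely formal and proceeds as sketched.
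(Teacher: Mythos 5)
First, note that the paper does not actually prove this theorem: it is quoted from \cite{Ewert,Cren}, so the only internal point of comparison is the proof of Theorem \ref{Main Isom}, which you are explicitly transplanting. Your construction of the covariant pair --- order-zero symbols acting by convolution as multipliers of $C^*_0(G)$, the unitaries $\Delta^{\ii t/m}$, the automatic covariance relation, and the resulting $\Phi\colon \Sigma(G)\rtimes\R\to C^*_0(G)$ --- is correct and is indeed the same first step as in the special case, modulo the analytic inputs you rightly flag (multiplier extension of order-$\ii t$ symbols, existence of complex powers of a positive pure-degree element, and the fact that the integrated form lands in $C^*_0(G)$ and not merely in its multiplier algebra), all of which must be imported from the calculus of \cite{CGGP}.

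The genuine gap is the bijectivity step. Your five-lemma argument rests on two structural facts that are special to the Heisenberg group and fail for a general graded group: (i) the ``principal-symbol sequence'' $0\to\mathcal{K}\to\Sigma(G)\to\CCC(\mathbb{S}^*\g)\to 0$, and (ii) the continuous-field structure of $C^*(G)$ over $\R$, i.e.\ the parametrization of the generic coadjoint orbits by a punctured line. Already for a step-two group with centre of dimension at least $2$, or for any group of step at least $3$ (e.g.\ the Engel group), the space of nontrivial coadjoint orbits modulo dilations is not a finite set of points, the kernel of the symbol map $\Sigma(G)\to\CCC(\mathbb{S}(\g_1^*))$ is not the compacts, and $C^*_0(G)$ has a composition series with more than two subquotients, indexed in general by a non-Hausdorff stratified space. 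A single application of the five lemma therefore cannot establish that $\Phi$ is an isomorphism; one would have to iterate it along a full composition series adapted to the orbit stratification and identify $\Phi$ on every subquotient (a substantial argument your proposal does not supply), or abandon this route altogether --- which is what the cited references do, Ewert via Rieffel's generalized fixed-point algebras for the dilation action and Cren via groupoid and deformation techniques. In short: the map is the right one, but the proof of bijectivity does not generalize as written.
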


For the Heisenberg group, we were thus able to decompose the symbol algebra \(\Sigma(G)\) into two copies of algebras easier to understand. I do not know if a similar splitting is possible for more general graded groups (with an easy understanding of the summands).

The algebra \(\Sigma(G)\) can also be seen as a non-commutative analog of functions on the sphere of \(G\) for the inhomogeneous dilations. The last theorem is then a generalization of the interpretation we made for the Heisenberg group, that the quotient by dilations of a vector space (minus 0) is the sphere. What we are asking for now is a potential Toeplitz-like model for the "hemispheres" of \(\Sigma(G)\). 

\section{Continuous version on contact manifolds}\label{ContactExtension}

\subsection{Continuous field of Toeplitz algebras}

Let $(M,H)$ be a contact manifold. We assume that the contact structure is co-oriented, i.e. the bundle $\faktor{TM}{H}$ is oriented. We can therefore choose a contact form $\theta \in \Omega^1(M)$ such that $\ker(\theta) = H$ and $\diff\theta_{|H}$ is a symplectic form on each fiber.

Let $J\colon H \to H$ be an almost complex structure compatible with the symplectic structure on $H$. Let $H^{1,0}\oplus H^{0,1} = H\otimes \CC$ be the corresponding decomposition of the complexified bundle. 

From this we build a bundle of Toeplitz algebras $\T(M,H)$ over $M$. To do so we consider the Hilbert bundle $\mathcal{F}^+(H^{1,0})$ over $M$ as previously.

Locally, over some trivializing $U \subset M$, we can fix an orthonormal frame bundle $W_1,\cdots,W_n\in \Gamma(U,H^{1,0})$. From these sections, we get shift operators on each fiber $S_1,\cdots,S_d$. Now if $X\in \Gamma(U,H^{1,0})$ we can write it as $X = \sum_{j = 1}^d f_jW_j$ with $f_j\in \CCC^{\infty}(U)$. Then the shift operator by $X$, 
\[S_X := \Sym(X\otimes \cdot ) \colon \Gamma(U,\mathcal{F}^+(H^{1,0}))\to \Gamma(U,\mathcal{F}^+(H^{1,0})),\]
becomes $S_X = \sum_{j = 1}^d f_jS_j$. Therefore the Toeplitz algebra over $U$ does not depend on the choice of local frame bundle. Now if $X \in \mathfrak{X}(M)$, the operator $S_X$ is bounded on each fiber but the operator norm might blow up at infinity in $M$ if $M$ is not compact. However if $X$ is compactly supported then $S_X$ extends to a bounded operator on $L^2(M,\mathcal{F}^+(H^{1,0}))$ with norm:
$$\|S_X\| = \sup_{m \in M}\|S_{X(m)}\|_{\mathcal{B}(\mathcal{F}^+(H_m^{1,0}))}.$$
We can then set:
$$\T(H^{1,0}) = C^*(S_X, X \in \mathfrak{X}_c(M)) \subset \mathcal{B}(L^2(M,\mathcal{F}^+(H^{1,0}))).$$

\begin{ex}If $X \in \Gamma(M,H^{1,0})$ goes to $0$ at infinity then $S_X \in \T(H^{1,0})$.\end{ex}

\begin{prop}$\T(H^{1,0})$ is the algebra of continuous sections vanishing at infinity of the bundle of Toeplitz algebras \(\left(\mathcal{T}(H_m^{1,0})\right)_{m\in M}\), the bundle structure being described above.\end{prop}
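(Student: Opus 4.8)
The plan is to recognize both algebras as continuous fields of $C^*$-algebras over $M$ and to compare them fiberwise. Write $\mathcal{A} = (\mathcal{T}(H^{1,0}_m))_{m \in M}$ for the bundle of Toeplitz algebras and $\Gamma_0(M,\mathcal{A})$ for its $C^*$-algebra of continuous sections vanishing at infinity. Choosing locally an orthonormal frame $W_1,\dots,W_d$ of $H^{1,0}$ trivializes $\mathcal{A}$ over the chart as the constant bundle with fiber $\T(\CC^d)$, the shift operators becoming the constant sections $S_1,\dots,S_d$; hence $\mathcal{A}$ is a locally trivial bundle and $\Gamma_0(M,\mathcal{A})$ is a genuine continuous field over $M$. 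A bounded continuous section acts on $L^2(M,\F^+(H^{1,0}))$ by pointwise (fiberwise) multiplication, with operator norm $\sup_{m}\|a_m\|$; this realizes $\Gamma_0(M,\mathcal{A})$ as a $C^*$-subalgebra of $\mathcal{B}(L^2(M,\F^+(H^{1,0})))$, and both it and $\T(H^{1,0})$ are $\CCC_0(M)$-algebras since scalar functions act fiberwise by scalars.

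First I would establish the inclusion $\T(H^{1,0}) \subseteq \Gamma_0(M,\mathcal{A})$. For $X \in \mathfrak{X}_c(M)$ the operator $S_X$ is a compactly supported continuous section whose value at $m$ lies in $\mathcal{T}(H_m^{1,0})$, so $S_X \in \Gamma_0(M,\mathcal{A})$. Since $\Gamma_0(M,\mathcal{A})$ is a closed $*$-subalgebra containing all the generators, it contains the $C^*$-algebra they generate. At the same time $\T(H^{1,0})$ is a $\CCC_0(M)$-submodule: multiplication by a scalar function $f$ is fiberwise scalar, hence commutes with every $S_X$ and satisfies $f\,S_X = S_{fX}$; for $f \in \CCC_c(M)$ this lies again in $\T(H^{1,0})$, and by density $\CCC_0(M)\cdot\T(H^{1,0}) \subseteq \T(H^{1,0})$, the module being non-degenerate because an $X \in \mathfrak{X}_c(M)$ satisfies $f\,S_X = S_X$ for any $f$ equal to $1$ on $\supp X$.

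It then remains to compute the fibers. Fix $m \in M$ and let $\ev_m \colon \Gamma_0(M,\mathcal{A}) \to \mathcal{T}(H_m^{1,0})$ be the evaluation, which is surjective. Every $v \in H_m^{1,0}$ is of the form $X(m)$ for some $X \in \mathfrak{X}_c(M)$, so $\ev_m(\T(H^{1,0}))$ contains all shifts $S_v$, $v \in H_m^{1,0}$, together with their products and adjoints; being a $*$-subalgebra, its closure is exactly $\mathcal{T}(H_m^{1,0})$. Thus $\T(H^{1,0})$ has full fibers. The proof concludes with the standard principle for $\CCC_0(M)$-algebras: a $\CCC_0(M)$-submodule $*$-subalgebra of a continuous field whose image is dense in every fiber is the whole field. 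Concretely, given $b \in \Gamma_0(M,\mathcal{A})$ and $\epsilon>0$, the function $m \mapsto \|b_m\|$ lies in $\CCC_0(M)$ by continuity of the norm in a locally trivial field, so $K = \{m : \|b_m\| \geq \epsilon\}$ is compact; covering $K$ by finitely many open sets $U_i$ on each of which — by fiberwise density at a point and continuity of the section norm — there is $a_i \in \T(H^{1,0})$ with $\|a_{i,m}-b_m\|<\epsilon$, and patching these with a subordinate partition of unity $\{\chi_i\}$ (using the module structure), one obtains $a = \sum_i \chi_i a_i \in \T(H^{1,0})$ with $\sup_m \|a_m - b_m\|$ as small as desired. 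Hence $b \in \T(H^{1,0})$, giving $\T(H^{1,0}) = \Gamma_0(M,\mathcal{A})$.

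I expect the only genuine work to be this last gluing step together with the continuity of $m \mapsto \|b_m\|$; these are the standard facts about continuous fields, which I would either cite or dispatch by the short partition-of-unity argument above. Everything else — local triviality, the easy inclusion, the $\CCC_0(M)$-module structure, and the fiberwise density of the shifts — is immediate from the constructions of this section.
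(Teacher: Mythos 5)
Your proof is correct and follows essentially the same route as the paper, which simply invokes local trivialization over Darboux charts together with the identity $\|S_X\| = \sup_m\|S_{X(m)}\|$ to force vanishing at infinity; you have filled in the standard $\CCC_0(M)$-algebra details (fullness of fibers, partition-of-unity gluing) that the paper leaves implicit.
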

\begin{proof}
This follows from the fact that we can cover $M$ by Darboux coordinate charts where all the bundles involved are trivialized and that if \(X \in \Gamma_c(M,H^{1,0})\) then \(\|S_X\| = \|X\|\) so necessarily the sections have to vanish at infinity.
\end{proof}

\begin{cor}We have the exact sequence:
\[\xymatrix{0 \ar[r] & \CCC_0(M)\otimes\mathcal{K} \ar[r] & \T(H^{1,0}) \ar[r] & \CCC_0(\mathbb{S}^*H) \ar[r] & 0}.\]
\end{cor}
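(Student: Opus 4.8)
The plan is to obtain the sequence fibrewise from Proposition \ref{ToepExactSeq} and then pass to continuous sections. By the previous proposition, $\T(H^{1,0})$ is the algebra of continuous sections vanishing at infinity of the bundle $(\T(H_m^{1,0}))_{m\in M}$, so on each fibre Proposition \ref{ToepExactSeq} gives the short exact sequence
\[0 \to \mathcal{K}(\F^+(H_m^{1,0})) \to \T(H_m^{1,0}) \to \CCC(\mathbb{S}^*H_m) \to 0.\]
These assemble into three bundles over $M$: the bundle of compacts on the Fock bundle, the Toeplitz bundle itself, and the bundle with fibre $\CCC(\mathbb{S}^*H_m)$, whose sections vanishing at infinity form $\CCC_0(\mathbb{S}^*H)$ (the fibres are compact, so decay occurs only in the base direction). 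First I would work over a Darboux chart $U$ trivialising $H^{1,0}$: there all three bundles are trivial and the sequence restricts to $\CCC_0(U)$ tensored with the fixed sequence $0\to\mathcal{K}\to\T_d\to\CCC(\mathbb{S}^{2d-1})\to 0$; since tensoring by $\CCC_0(U)$ preserves exactness, exactness of the section algebras holds locally.

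Next I would globalise. Exactness at the ideal and at the middle term is a pointwise (hence local) condition and follows from the chart computation. The one genuinely global point is surjectivity of the quotient map on global sections: given a continuous section of the sphere-bundle algebra, one lifts it over each chart of a locally finite cover and patches the local lifts with a subordinate partition of unity, using that the fibres are $C^*$-algebras so that convex combinations of local lifts remain lifts modulo the ideal. This is the standard argument that the section functor $\CCC_0(M,-)$ is exact on locally trivial fields of short exact sequences, and it yields exactness of
\[0 \to \CCC_0\bigl(M,(\mathcal{K}(\F^+(H_m^{1,0})))_m\bigr) \to \T(H^{1,0}) \to \CCC_0(\mathbb{S}^*H) \to 0.\]

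It remains to identify the ideal with $\CCC_0(M)\otimes\mathcal{K}$, i.e. to show that the bundle of compact operators is trivial. This is the main input: although the Toeplitz bundle carries the generally nontrivial structure group coming from the unitary frame changes of $H^{1,0}$, the bundle of compacts depends only on the underlying Hilbert bundle $\F^+(H^{1,0})$, whose structure group embeds into the full unitary group $\mathcal{U}(\mathcal{H})$ of a separable infinite-dimensional Hilbert space. By Kuiper's theorem this group is contractible in the norm topology, so the Hilbert bundle is trivial over the paracompact base $M$, and the associated bundle of compacts is $\CCC_0(M)\otimes\mathcal{K}$. I expect this triviality, together with the surjectivity on sections, to be the only nontrivial steps; the remainder is just the bookkeeping of assembling the fibrewise sequences.
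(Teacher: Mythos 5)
Your argument is correct and follows the route the paper intends: the corollary is stated without proof as an immediate consequence of the preceding proposition identifying $\T(H^{1,0})$ with the sections vanishing at infinity of the locally trivial bundle of Toeplitz algebras, and you assemble the fibrewise sequences of Proposition \ref{ToepExactSeq} over Darboux charts and patch with a partition of unity exactly as expected. Your appeal to Kuiper's theorem (equivalently, the vanishing of the Dixmier--Douady class of a compacts bundle arising from a genuine Hilbert bundle) to trivialize $\F^+(H^{1,0})$ and justify writing the ideal as $\CCC_0(M)\otimes\mathcal{K}$ is a legitimate and welcome filling-in of the one point the paper leaves implicit.
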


Likewise, we can take the opposite complex structure and consider the algebra \(\T(H^{0,1})\).

\begin{rem}[The non-co-oriented case]\label{Not co-oriented}  Assume now that the contact structure is possibly not-co-oriented. We can construct a 2-sheet cover $\pi \colon \tilde{M}\to M$ such that $\tilde{H}:= \pi^*H$ is a co-oriented contact structure (this cover is trivial if and only if $H$ is already co-oriented). Let us choose a contact form $\theta \in \Omega^1(\tilde{M})$ and a complex structure on $\tilde{H}$ compatible with the symplectic structure induced by $\theta$. We get a bundle of Toeplitz algebras $\T(\tilde{H}^{1,0})$. Now $\Z_2$ acts on $\tilde{M}$ by deck transformations. If $\gamma$ denotes its non-trivial element then $\gamma^*\tilde{H} = \tilde{H}$, however the symplectic structures induced by $\gamma^*\diff\theta$ and $\diff\theta$ are opposite to one another, i.e. they have opposite orientation. Therefore at the level of the Toeplitz algebras we have 
\[\gamma^* \T(\tilde{H}^{1,0}) = \T(\tilde{H}^{0,1}) = \T(\tilde{H}^{1,0})^{op},\]
so we cannot obtain a bundle of Toeplitz algebras on $M$ because of this orientation ambiguity. We will see however that the fibered pair of Toeplitz algebras still make sense in the non-co-oriented case.
\end{rem}

As in Section \ref{IsomorphismSection}, we can consider the number operator, this time on each fiber. Write \(\mathcal{N}\) for the diagonal operator on sections of \(\mathcal{F}^+(H^{1,0})\) equal to \(k\Id\) on the subspace of sections of \(\Sym^k(H^{1,0})\). The operator \(\mathcal{N}+1\) is then unbounded and positive, we can consider its complex powers.

The action \(\Ad((\mathcal{N}+1)^{\ii t/2})\) is again well defined on \(\mathcal{B}(L^2(M,\mathcal{F}^+(H^{1,0})))\) and preserves \(\T(H^{1,0})\). This is true fiberwise and if the norm of a section goes to \(0\) at infinity then this is still the case for its conjugation by \((\mathcal{N}+1)^{\ii t/2}\).

We can thus take the crossed product by the \(\R\)-action and again have an exact sequence:
\[\xymatrix{0\ar[r] & \CCC_0(M\times \R^*_+,\mathcal{K}) \ar[r] & \T(H^{1,0}) \rtimes \R \ar[r] & \CCC_0(H^*\setminus M) \ar[r] & 0,}\]
with $M\subset H^*$ being seen as the zero section.

We can also take the opposite algebra and fiber both over their common quotient. As before we get:

\[\xymatrix@C-1.25pc{0\ar[r] & \CCC_0(M\times \R^*,\mathcal{K}) \ar[r] & \left(\T(H^{1,0}) \bigoplus_{\CCC_0(\mathbb{S}^*H)} \T(H^{0,1}) \right)\rtimes \R \ar[r] & \CCC_0(H^*\setminus M) \ar[r] & 0.}\]

\subsection{Continous field of Heisenberg groups and isomorphism}

We can thus construct a bundle of Heisenberg Lie algebras on $M$ the following way: write $\ttt_HM = H\oplus \R$ and define a Lie bracket on each fiber by:
\[\forall X,Y \in H_x, [X,Y] = \diff_x\theta(X,Y).\]
This defines a bundle of Lie algebras that we can integrate using the Baker-Campbell-Hausdorff formula on each fiber. We obtain a locally trivial (e.g. in Darboux coordinates) bundle of groups denoted by $T_HM$, with fiber at \(x \in M\) given by \(\Heis(H_x, \diff_x\theta)\). The (groupoid) $C^*$-algebra of $T_HM$ is then the algebra of sections, vanishing at infinity, of a locally trivial bundle of $C^*$-algebras over $M$, with fiber over a point $x\in M$ equal to the group \(C^*\)-algebra $C^*(\Heis(H_x,\diff_x\theta))$. We can see it as a completion of the algebra of compactly supported smooth sections $\Gamma_c(M,T_HM)$ with the convolution defined fiberwise using the Heisenberg group structure.

Like in the group case, we can take on each fiber the kernel of the trivial representation of the fiber. Denote by $C^*_0(T_HM)$ the corresponding ideal (it is still a locally trivial bundle of $C^*$-algebras over $M$). We have the exact sequence:

$$\xymatrix{0 \ar[r] & \CCC_0(M)\otimes\CCC_0(\R^*,\mathcal{K}) \ar[r] & C^*_0(T_HM) \ar[r] & \CCC_0(H^*\setminus M) \ar[r] & 0.}$$

\begin{rem}
   Here we used the 1-form \(\theta\) to define the bundle of Heisenberg groups. We actually don't need it. We can equivalently set \(\ttt_HM = H \oplus \faktor{TM}{H}\) and define the Lie bracket of sections as:
    \[\forall X,Y\in \Gamma(M,H), [X,Y]_{\ttt_HM} := [X,Y]\mod \Gamma(H).\]
    This defines a Lie algebroid structure on \(\ttt_HM\) (with trivial anchor). This structure localizes on each fiber, i.e. \(\ttt_HM\) is a smooth family of Lie algebras over \(M\). Trivializing the line bundle \(\faktor{TM}{H}\) gives an isomorphism with the other \(\ttt_HM\) as defined previously. In particular this new definition is still valid when the contact structure is not co-oriented. 
\end{rem}

We can extend the notion of inhomogeneous dilations to this case. On the vector bundle \(H \oplus \R\) they are of the form \(\delta_{\lambda} = \lambda \oplus \lambda^2, \lambda > 0\). These maps preserve the Lie algebra structure on each fiber and thus lift to maps on \(T_HM\) preserving the fiberwise group structure (i.e. groupoid automorphisms).

In the co-oriented case, we can globally differentiate between positively indexed and negatively indexed representations in the spectrum by using the image of the Reeb field. The Reeb field is the unique vector field \(Z\in \mathfrak{X}(M)\) such that \(\theta(X) = 1\) and \(\diff\theta(Z,\cdot) = 0\). This vector field corresponds in each fiber to the element \((0,1)\in H_x \times \R\). If \(\pi\) is one of the infinite dimensional representations of the fiber \(\Heis(H_x,\diff_x\theta)\) then \(\diff\pi(Z) = \ii \lambda \Id, \lambda \neq 0\). We can thus globally distinguish between positive and negative representations in each fiber by looking at the image of the Reeb field.

Using this distinction we can again split the \(C^*\)-algebra into two parts:
\[C^*_0(T_HM) = I_+(H,\diff\theta) \bigoplus_{\CCC_0(H^*\setminus M)} I_-(H,\diff\theta).\]
The algebra \(I_{\pm}(H,\diff\theta)\) is the algebra of continuous sections vanishing at infinity of the field \((I_{\pm}(H_x,\diff_x\theta))_{x\in M}\). Both of these algebras are stable under the action of the inhomogeneous dilations.

\subsection{Isomorphism on compact contact manifolds}

Assume that \((M,H)\) is co-oriented, and fix a \(\theta\in \Omega^1(M)\) with \(\ker(\theta) = H\) and \(\diff\theta\) symplectic on \(H\). We also choose a compatible complex structure on \((H,\diff\theta)\) As before we get Toeplitz algebras \(\T(H^{1,0}), \T(H^{0,1})\).

We can define the symbols spaces \(\Sigma^m(T_HM) \) of Section \ref{Symbols technicalities} in a similar way (see e.g. \cite{Epstein,vanErpYuncken}). They are the smooth sections of the bundles \(\left(\Sigma^m(\Heis(H_x,\diff_x\theta))\right)_{x\in M}\) with compact support. Taking the completion \(\Sigma(T_HM)\) for \(m = 0\) thus gives an algebra of sections vanishing at infinity. 

Even though the constructions in Sections \ref{IsomorphismSection} and \ref{Symbols technicalities} could be made fiberwise, in order to get an isomorphism, they would need to be globally defined. In particular, one would need to work with a global version of the operators \(\Delta_{\pm}\) and understand their behaviour at infinity. To avoid these technicalities, we now restrict ourselves to \(M\) \emph{compact}.

\begin{rem}
    In the non-compact case, write \(\Sigma_b(T_HM)\) for the bounded sections of the bundle \((\Sigma(\Heis(H_x,\diff_x\theta)))_{x\in M}\). This is exactly the multiplier algebra of the algebra \(\Sigma(T_HM)\) of sections vanishing at infinity of the same bundle. Locally we can send sections over some open subset \(U\) to a section of multipliers of \(C^*_0(T_HM)_{|U}\). One could use this fact get the desired isomorphism in the non-compact case. However, the images of the shifts were produced using the inverse square-root of \(\Delta_{\pm}\). It is then not clear that there is a choice of \(\Delta_{\pm}\) for which these square-root inverses would be bounded at infinity.
\end{rem}

In local Darboux coordinates over a trivializing set \(U\subset M\) we get a \(\CCC^{\infty}(U)\)-basis of sections of \(H_{|U}\), \(X_1,\cdots,X_d,Y_1,\cdots,Y_d\). We can moreover choose such coordinates so that \(\forall 1\leq j \leq d, JX_j = Y_j\). We then have 
\[\forall 1\leq j,k\leq d, [X_j,Y_k] = \delta_{j,k}Z,\]
where \(Z\) is the globally defined Reeb field (it is globally defined because the contact structure is co-oriented). On the other hand, the \(X_j\)'s commute between them and so do the \(Y_j\)'s.
With these vector fields, we can construct, locally, operators \(\Delta_{\pm}\) with the same formula as on the Heisenberg group. Operators \(\Delta_{\pm}\in \Sigma^2(T_HM)\) are then constructed using partitions of unity.

\begin{rem}
    We can view this local construction of \(\Delta_{\pm}\) in another way. Choosing Darboux coordinates is the same thing as choosing a local contactomorphism with an open subset of the Heisenberg group. We can then take the operators \(\Delta_{\pm}\) defined on the Heisenberg group and pull them back to our coordinate chart.
\end{rem}

The same constructions as in Section \ref{Symbols technicalities} carry on in this new setting. If \(X \in \Gamma(M,H^{1,0})\) then \(-\ii X(\Delta_+)^{-1/2} \in M(I_+(H,\diff\theta))\).  Likewise we have a strongly continuous family \(\Delta_+^{\ii t/2} \in M_u(I_+(H,\diff\theta))\) like in Theorem \ref{Family of unitary Heisenberg multipliers}.

From this, we obtain a map \(\T(H^{1,0})\rtimes \R \to I_+(H,\diff\theta)\), which restricts over each point of \(M\) to the map defined in Theorem \ref{Toeplitz as Heisenberg multipliers} between the respective fibers. Similar arguments give a map \(\T(H^{0,1})\rtimes \R \to I_-(H,\diff\theta)\). We thus get the same conclusions as in Section \ref{IsomorphismSection} using the fact that these maps realise an isomorphism fiberwise.

\begin{thm}\label{Isom Contact}
    Let \(M\) be a compact manifold with contact structure \(H\) co-oriented by \(\theta\in\Omega^1(M)\) and a fixed complex structure \(J\in\End(H)\) compatible with \(\diff\theta\). Then there are isomorphisms of \(C^*\)-algebras:
    \[\T(H^{1,0})\rtimes \R \isomto I_+(H,\diff\theta), \T(H^{0,1})\rtimes \R \isomto I_-(H,\diff\theta).\]
    These maps glue to an isomorphism of \(C^*\)-algebras:
    \[\left(\T(H^{1,0}) \bigoplus_{\CCC(\mathbb{S}^*H)} \T(H^{0,1}) \right)\rtimes \R \isomto C^*_0(T_HM).\]
    All these isomorphisms are \(\R^*_+\)-equivariant for the dual action on the left hand side and the inhomogeneous dilations on the right hand side.
\end{thm}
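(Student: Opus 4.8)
The plan is to reproduce the argument of Theorem \ref{Main Isom} one fiber at a time, using the globally defined multipliers furnished just above the statement. First I would define a $*$-homomorphism $\varphi \colon \T(H^{1,0}) \to M(I_+(H,\diff\theta))$ on generators by $\varphi(S_X) = -\ii X\Delta_+^{-1/2}$ for $X \in \Gamma(M,H^{1,0})$, the right-hand side being the multiplier provided by the compact-manifold version of Theorem \ref{Toeplitz as Heisenberg multipliers}. To see that this is well defined I would evaluate in the fiber over each $x \in M$: by construction the restriction of $-\ii X\Delta_+^{-1/2}$ over $x$ is precisely the image of $S_{X(x)}$ under the isomorphism of Theorem \ref{Main Isom} for the symplectic vector space $(H_x,\diff_x\theta)$. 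Hence $\varphi$ respects the defining relations among the $S_X,S_Y^*$ in every fiber, and since the norm on $I_+(H,\diff\theta)$ is the supremum of the fiberwise norms, $\varphi$ is isometric on polynomials in the shifts and their adjoints and extends continuously.

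Next I would verify the covariance relation
\[\forall t \in \R, \ \Ad(\Delta_+^{\ii t/2}) \circ \varphi = \varphi \circ \Ad((\mathcal{N}+1)^{\ii t/2}),\]
again fiberwise, where it reduces to the identity already established for $(H_x,\diff_x\theta)$. The strongly continuous unitary group $(\Delta_+^{\ii t/2})_{t\in\R}$ of $M_u(I_+(H,\diff\theta))$ then pairs with $\varphi$, through the universal property of the crossed product, to produce a $*$-homomorphism $\Phi \colon \T(H^{1,0}) \rtimes \R \to I_+(H,\diff\theta)$ which restricts over each point of $M$ to the isomorphism of Theorem \ref{Main Isom}.

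To show $\Phi$ is an isomorphism I would place it between the two exact sequences exhibited for $\T(H^{1,0}) \rtimes \R$ and for the positive part of $C^*_0(T_HM)$, whose ideals are both $\CCC_0(M\times\R^*_+,\mathcal{K})$ and whose quotients are both $\CCC_0(H^*\setminus M)$. Applied fiberwise, Theorem \ref{Toeplitz as Heisenberg multipliers} shows that commutators are sent to $0$ at $t=0$, so the induced maps on the ideal and on the quotient are the identity and hence isomorphisms; the five lemma then gives that $\Phi$ is an isomorphism. The same argument with $\overline{X}$ and $\Delta_-$ yields $\T(H^{0,1}) \rtimes \R \isomto I_-(H,\diff\theta)$. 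Since the two actions agree on the common quotient $\CCC(\mathbb{S}^*H)$, these assemble, exactly as in the corollary to Theorem \ref{Main Isom}, into the claimed isomorphism out of the fibered product, and the $\R^*_+$-equivariance follows from the invariance of the shift multipliers together with $\delta_\lambda(\Delta_\pm^{\ii t/2}) = \lambda^{\ii t}$.

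The main obstacle is the globalization of $\Delta_\pm$. Fiberwise everything is Theorem \ref{Main Isom}, but $\Delta_\pm$ is only defined locally in Darboux charts and patched by a partition of unity, so the real content is that $X\Delta_+^{-1/2}$ and $\Delta_+^{\ii t/2}$ define genuine \emph{global} multipliers of $I_+(H,\diff\theta)$ rather than merely bounded fields of fiberwise multipliers. This is where compactness of $M$ is essential: it guarantees the uniform symbol estimates needed for the inverse square root to land in $\Sigma^{-1}(T_HM)$ and the complex powers in $\Sigma^{\ii t}(T_HM)$, and thereby to act continuously on $I_+(H,\diff\theta)$ as in Section \ref{Symbols technicalities}; without it the inverse square root of $\Delta_\pm$ need not be bounded at infinity, as noted in the preceding remark.
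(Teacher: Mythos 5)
Your proposal is correct and follows essentially the same route as the paper: globalize $\Delta_\pm$ via Darboux charts and a partition of unity on the compact $M$, use the resulting multipliers $-\ii X\Delta_+^{-1/2}$ and $\Delta_+^{\ii t/2}$ to build the covariant pair, and conclude that the crossed-product map is an isomorphism because it restricts fiberwise to the isomorphism of Theorem \ref{Main Isom} (equivalently, via the five lemma on the two exact sequences). Your closing remark correctly identifies the globalization of $\Delta_\pm$ and the role of compactness as the only genuinely new content beyond the linear case.
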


\begin{cor}
    There is a Morita equivalence:
    \[\T(H^{1,0}) \bigoplus_{\CCC(\mathbb{S}^*H)} \T(H^{0,1}) \sim C^*_0(T_HM)\rtimes \R^*_+.\]
\end{cor}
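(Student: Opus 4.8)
The plan is to run the same formal argument used for the group-level Morita equivalence corollary, with Theorem \ref{Isom Contact} now playing the role that Theorem \ref{Main Isom} played there. Since \(M\) is assumed compact, Theorem \ref{Isom Contact} is available and supplies an isomorphism of \(C^*\)-algebras
\[\left(\T(H^{1,0}) \bigoplus_{\CCC(\mathbb{S}^*H)} \T(H^{0,1}) \right)\rtimes \R \isomto C^*_0(T_HM)\]
which is \(\R^*_+\)-equivariant, the action on the source being the dual action attached to the \(\R\)-crossed product and the action on the target being the inhomogeneous dilations \(\delta\). The first step is to use this equivariance to pass to the crossed products by \(\R^*_+\), which transports the \(\delta\)-crossed product on the Heisenberg side to the iterated crossed product by \(\R\) and then by its dual on the Toeplitz side.

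Concretely, I would assemble the chain of isomorphisms
\begin{align*}
C^*_0(T_HM)\rtimes_{\delta}\R^*_+
  &\cong \left(\left(\T(H^{1,0})\bigoplus_{\CCC(\mathbb{S}^*H)}\T(H^{0,1})\right)\rtimes\R\right)\rtimes\R^*_+ \\
  &\cong \left(\T(H^{1,0})\bigoplus_{\CCC(\mathbb{S}^*H)}\T(H^{0,1})\right)\otimes\mathcal{K}(L^2(\R)).
\end{align*}
The first isomorphism is precisely the \(\R^*_+\)-equivariant identification of Theorem \ref{Isom Contact}, applied after forming crossed products. The second is Takai duality: the second crossed product is taken with respect to the dual action, and under Pontryagin duality \(\widehat{\R}\cong\R^*_+\) (the same identification \(C^*(\R)\cong\CCC_0(\R^*_+)\) used earlier in the paper), Takai's theorem collapses the double crossed product to a tensor product with \(\mathcal{K}(L^2(\R))\). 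Since tensoring by \(\mathcal{K}(L^2(\R))\) is a stable isomorphism, it yields the desired Morita equivalence, exactly as in the group case.

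The content of the corollary is therefore essentially formal once Theorem \ref{Isom Contact} is in hand, so the main point to verify carefully is bookkeeping rather than a genuine obstacle: one must confirm that the dilation action \(\delta\) on \(C^*_0(T_HM)\) really corresponds, through the equivariant isomorphism, to the dual action on the iterated crossed product, so that Takai duality applies with the correct action. This is guaranteed by the equivariance clause of Theorem \ref{Isom Contact}, and compactness of \(M\) enters only through the hypothesis of that theorem. The remaining subtlety is the usual one of ensuring that, as \(\R\) is amenable, maximal and reduced crossed products agree throughout, so that no distinction between the two is needed in applying Takai duality.
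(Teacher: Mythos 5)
Your argument is correct and is exactly the route the paper intends: the paper leaves this corollary without an explicit proof precisely because it is the same Takai-duality computation given for the group-level Morita equivalence, now fed by the \(\R^*_+\)-equivariant isomorphism of Theorem \ref{Isom Contact} instead of Theorem \ref{Main Isom}. Your bookkeeping remarks about the dual action and amenability of \(\R\) match the paper's earlier remarks verbatim in spirit, so there is nothing to add.
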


Assume now that the contact manifold \(M\) is still compact but the contact structure is not co-oriented. As in Remark \ref{Not co-oriented}, we can consider a 2-sheet covering \(p \colon \tilde{M}\to M\) such that the pullback \(\tilde{H} = p^*H\) on \(\tilde{M}\) is co-oriented. Then, we have a \(\R^*_+\)-equivariant isomorphism:
\[C^*_0\left(T_{\tilde{H}}\tilde{M}\right) \cong \left(\T\left(\tilde{H}^{1,0}\right) \bigoplus_{\CCC(\mathbb{S}^*\tilde{H})} \T\left(\tilde{H}^{0,1}\right)\right)\rtimes\R. \]
Now we have \(T_{\tilde{H}}\tilde{M} \cong p^*T_HM\), and the action of \(\faktor{\Z}{2\Z}\) on \(\tilde{M}\) by deck transformations preserves \(C^*_0\left(T_{\tilde{H}}\tilde{M}\right)\). The non-trivial deck transformation exchanges the roles of \(\tilde{H}^{1,0}\) and \(\tilde{H}^{0,1}\). It also sends the Reeb field to its opposite, hence sends \(\Delta_+\) to \(\Delta_-\). Therefore we get:

\begin{thm}
    Let \(M\) be a compact contact manifold with contact structure \(H\). Let \(\tilde{H}\) be the co-oriented contact structure on the 2-sheet covering of \(M\) as above. We fix a contact 1-form \(\theta\in \Omega^1(M)\) and a compatible complex structure \(J\) on \(\tilde{H}\). There is a \(\R^*_+\)-equivariant isomorphism of \(C^*\)-algebras:
    \begin{align*}
        C^*_0(T_HM) &\cong \left(\left(\T\left(\tilde{H}^{1,0}\right)                \bigoplus_{\CCC(\mathbb{S}^*\tilde{H})} \T\left(\tilde{H}^{0,1}\right)\right)\rtimes\R\right)^{\faktor{\Z}{2\Z}}\\
                    &\cong \left(\T\left(\tilde{H}^{1,0}\right) \bigoplus_{\CCC(\mathbb{S}^*\tilde{H})} \T\left(\tilde{H}^{0,1}\right)\right)^{\faktor{\Z}{2\Z}}\rtimes\R
    \end{align*}
    If the contact structure is co-oriented we can push forward \(J, \theta\) to \(M\) and we recover the isomorphism of Theorem \ref{Isom Contact}. We also get the Morita equivalence:
    \[C^*_0(T_HM) \rtimes\R^*_+ \sim \left(\T\left(\tilde{H}^{1,0}\right) \bigoplus_{\CCC(\mathbb{S}^*\tilde{H})} \T\left(\tilde{H}^{0,1}\right)\right)^{\faktor{\Z}{2\Z}}\]
\end{thm}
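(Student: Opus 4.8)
The plan is to deduce this statement from Theorem \ref{Isom Contact} applied to the co-oriented double cover $\tilde{M}$, and then descend by passing to $\faktor{\Z}{2\Z}$-invariants. Since $\tilde{M}$ is compact (a finite cover of the compact $M$) and $\tilde{H}$ is co-oriented, Theorem \ref{Isom Contact} supplies the $\R^*_+$-equivariant isomorphism
\[C^*_0\!\left(T_{\tilde{H}}\tilde{M}\right) \isomto \left(\T\!\left(\tilde{H}^{1,0}\right) \bigoplus_{\CCC(\mathbb{S}^*\tilde{H})} \T\!\left(\tilde{H}^{0,1}\right)\right)\rtimes \R.\]
First I would record the descent identification $C^*_0(T_HM) \cong C^*_0(T_{\tilde{H}}\tilde{M})^{\faktor{\Z}{2\Z}}$. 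This rests on $T_{\tilde{H}}\tilde{M} \cong p^*T_HM$ (both carry the bracket on $\tilde{H}\oplus T\tilde{M}/\tilde{H}$ pulled back from $M$), together with the standard fact that, for a free covering action, the invariant sections vanishing at infinity of a pulled-back bundle of $C^*$-algebras are exactly the sections of the base bundle. Concretely, $\gamma$ identifies the two fibers of $p^*T_HM$ over $\tilde{x}$ and $\gamma(\tilde{x})$ with the single fiber $(T_HM)_{p(\tilde{x})}$ by the identity, so $\faktor{\Z}{2\Z}$-invariance is literally descent to $M$.

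The heart of the argument is to promote the displayed isomorphism to a $\faktor{\Z}{2\Z}$-equivariant one for the correct action on the right. The non-trivial deck transformation $\gamma$ preserves $\tilde{H}$ but reverses its co-orientation, so (as in Remark \ref{Not co-oriented}) $\gamma^*\diff\theta = -\diff\theta$ on $\tilde{H}$ and hence $\gamma^*\tilde{H}^{1,0} = \tilde{H}^{0,1}$. At the level of Toeplitz bundles this interchanges $\T(\tilde{H}^{1,0})$ and $\T(\tilde{H}^{0,1})$ compatibly with their common quotient $\CCC(\mathbb{S}^*\tilde{H})$, so $\gamma$ acts on the fibered sum by the involution swapping the two summands (composed with the geometric action on the base). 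On the Heisenberg side, $\gamma$ sends the Reeb field $Z$ to $-Z$, hence sends $\Delta_+$ to $\Delta_-$ and exchanges $I_+(\tilde{H},\diff\theta)$ with $I_-(\tilde{H},\diff\theta)$. Because the maps of Theorem \ref{Isom Contact} are built canonically out of $\Delta_\pm$ and the shifts $S_X$ — the latter transforming naturally under unitary changes of the $(1,0)$-frame by the functoriality of the Toeplitz construction — the isomorphism intertwines these two involutions. I expect this equivariance check, namely carefully tracking $\gamma$ through every ingredient ($\tilde{H}^{1,0}$, the operators $\Delta_\pm$, the number operator $\mathcal{N}$, and the $\R$-action) and invoking the naturality of the construction, to be the main obstacle. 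Granting it, taking $\faktor{\Z}{2\Z}$-invariants on both sides yields the first displayed isomorphism.

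For the second isomorphism I would commute the invariants past the crossed product. The $\R$-action is conjugation by the complex powers of $\mathcal{N}+1$, whose grading is preserved by the swap $\tilde{H}^{1,0}\leftrightarrow \tilde{H}^{0,1}$; hence the $\faktor{\Z}{2\Z}$- and $\R$-actions commute. For commuting actions of a finite group and $\R$, with the finite group acting only on the coefficient algebra $B$, one has $(B\rtimes \R)^{\faktor{\Z}{2\Z}} \cong B^{\faktor{\Z}{2\Z}}\rtimes \R$, verified on the dense convolution algebra of $B$-valued functions (where invariance is pointwise) and then completed. In the co-oriented case the cover splits as $\tilde{M} = M\sqcup M$ with opposite co-orientations on the two sheets and $\gamma$ swapping them; the invariants of the resulting $B\oplus B$ under the swap form the diagonal, which reproduces $\T(H^{1,0})\bigoplus_{\CCC(\mathbb{S}^*H)}\T(H^{0,1})$ and recovers Theorem \ref{Isom Contact}. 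Finally, crossing the first isomorphism with the dilation action $\R^*_+$ and applying Takai duality exactly as in the preceding corollaries gives $C^*_0(T_HM)\rtimes \R^*_+ \cong B^{\faktor{\Z}{2\Z}}\otimes \mathcal{K}(L^2(\R))$, a stable isomorphism and hence the stated Morita equivalence.
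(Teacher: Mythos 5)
Your proposal is correct and follows essentially the same route as the paper: apply Theorem \ref{Isom Contact} to the co-oriented double cover, observe that the non-trivial deck transformation exchanges $\tilde{H}^{1,0}$ with $\tilde{H}^{0,1}$ and sends the Reeb field (hence $\Delta_+$) to its opposite (hence $\Delta_-$), so the isomorphism is $\faktor{\Z}{2\Z}$-equivariant, and then pass to invariants. The paper states this far more tersely; your added verifications (descent of invariant sections to $C^*_0(T_HM)$, commuting the invariants past the $\R$-crossed product, the split-cover check in the co-oriented case, and Takai duality for the Morita equivalence) are precisely the implicit content of its argument.
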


\nocite{*}
\bibliographystyle{plain}
\bibliography{Biblio}
\end{document}